\documentclass[12pt,notitlepage]{amsart}
\usepackage{latexsym,amsfonts,amssymb,amsmath,amsthm}
\usepackage{graphicx}
\usepackage{xcolor}
\usepackage[normalem]{ulem}
\usepackage{graphicx}
\usepackage{color}
\usepackage{datetime2}
\usepackage{mathrsfs}
\usepackage{amsmath, amsthm, amsfonts, amssymb, cite}
\usepackage{wrapfig}
\usepackage{stackrel}
\usepackage{color}
\usepackage{float}
\usepackage{enumitem}
\usepackage{mathtools}
\usepackage{multicol}
\pagestyle{headings}

\let\turc\c
\usepackage{etoolbox}
\robustify\turc 
\renewcommand{\c}{\mathfrak{c}}
\newcommand{\bpm}{\begin{pmatrix}}
\newcommand{\epm}{\end{pmatrix}}

\usepackage[inner=1.0in,outer=1.0in,bottom=1.0in, top=1.0in]{geometry}

\newcommand{\mz}{\ensuremath{\mathbb Z}}
\newcommand{\mr}{\ensuremath{\mathbb R}}

\newcommand{\Q}{\ensuremath{\mathbb Q}}


\newcommand{\mymod}{\ensuremath{\negthickspace \negmedspace \pmod}}
\newcommand{\shortmod}{\ensuremath{\negthickspace \negthickspace \negthickspace \pmod}}

\newcommand{\intR}{\int_{-\infty}^{\infty}}

\newcommand{\sumstar}{\sideset{}{^*}\sum}

\newcommand{\cond}{\mathrm{cond}}
\newcommand{\redu}{\mathrm{red}}

\newcommand{\hit}{\mathrm{ht}}

\DeclareMathOperator{\sgn}{sgn}

\theoremstyle{plain}		
	\newtheorem{mytheo}{Theorem} [section]
	
	\newtheorem{myprop}[mytheo]{Proposition}
	\newtheorem{mycoro}[mytheo]{Corollary}
     \newtheorem{mylemma}[mytheo]{Lemma}

\theoremstyle{definition}
	\newtheorem{defiplaintext}[mytheo]{Definition}

\theoremstyle{remark}

\numberwithin{equation}{section}
\numberwithin{figure}{section}

\begin{document}

\author{Matthew P. Young}
 \address{Department of Mathematics \\
 	  Texas A\&M University \\
 	  College Station \\
 	  TX 77843-3368 \\
 		U.S.A.}

 \email{myoung@math.tamu.edu}
 \thanks{This material is based upon work supported by the National Science Foundation under agreement No. DMS-2001306.  Any opinions, findings and conclusions or recommendations expressed in this material are those of the authors and do not necessarily reflect the views of the National Science Foundation.  }

\begin{abstract} 
We prove an essentially optimal large sieve inequality for self-dual Eisenstein series of varying levels.  This bound can alternatively be interpreted as a large sieve inequality for rationals ordered by height.  The method of proof is recursive, and has some elements in common with Heath-Brown's quadratic large sieve, and the asymptotic large sieve of Conrey, Iwaniec, and Soundararajan.
\end{abstract}

 \title{The large sieve for self-dual Eisenstein series of varying levels
}
\subjclass{11M06, 11N75}
\keywords{Large sieve inequality, Eisenstein series}

\dedicatory{To Henryk Iwaniec, with admiration and gratitude, on the occasion of his 75th birthday}
\maketitle
\section{Introduction}
\subsection{Setting up the problem}
A general large sieve inequality is an upper bound on the operator norm of an arithmetically-defined matrix $\Lambda = (\lambda_{m,n})$, with $\lambda_{m,n} \in \mathbb{C}$.  Define the norm of $\Lambda$, denoted $\|\Lambda \|$, by
\begin{equation*}
\|\Lambda \| =  \max_{|{\bf \alpha}| = 1} \sum_{m} \Big| \sum_n \alpha_n \lambda_{m,n} \Big|^2,
\qquad {\bf \alpha} = (\alpha_n).
\end{equation*}
The duality principle implies that $\| \Lambda \| = \|\Lambda^t \|$, where
$\Lambda^t = (\lambda_{n,m})$.
%

A particularly interesting choice of $\lambda_{m,n}$ is $\lambda_f(n)$, where $f$ ranges over a family $\mathcal{F}$ of automorphic forms or $L$-functions, $n$ ranges over an interval of positive integers, say $N/2 < n \leq N$, and $\lambda_f(n)$ is the $n$-th Dirichlet series coefficient of the $L$-function $L(f,s)$.  In this case, we write $\Delta(\mathcal{F}, N)$ for the norm of this large sieve matrix, namely
\begin{equation}
 \Delta(\mathcal{F}, N) = \max_{|{\bf \alpha}| = 1} \sum_{f \in \mathcal{F}} \Big| \sum_{N/2 < n \leq N} \alpha_n \lambda_f(n)  \Big|^2.
\end{equation}
The dual norm $\Delta^*(\mathcal{F}, N)$ is given by
\begin{equation}
 \Delta^*(\mathcal{F}, N) = \max_{|{\bf \beta}| = 1} 
 \sum_{N/2 < n \leq N}
  \Big| \sum_{f \in \mathcal{F}} \beta_f \lambda_f(n)  \Big|^2.
\end{equation}

The classical multiplicative large sieve inequality concerns the case where $\lambda_{f}(n) = \chi(n)$, and where the family runs over primitive Dirichlet characters $\chi$ modulo $q$, with $q \leq Q$.  Applications include the Bombieri-Vinogradov theorem, estimates for moments of $L$-functions, zero density estimates, and a variety of sieving problems. See \cite{Montgomery} for details.

There are many works on large sieve inequalities for other families.  For instance, Deshouillers and Iwaniec \cite{DeshouillersIwaniec}  obtained a sharp bound for cusp forms on $GL_2$, which in turn has been a powerful tool in studying statistical properties of the Riemann zeta function on the critical line.  Heath-Brown \cite{HB} showed an essentially optimal upper bound on the sparse sub-family of quadratic Dirichlet characters.  Many state of the art works on quadratic twists of modular forms, with elliptic curves being of particular interest, have relied on Heath-Brown's bound.

In this paper, we are interested in the following family $\mathcal{F}$. 
For any Dirichlet character $\psi$ modulo $r$ and real number $t$, define
\begin{equation*}
 \lambda_{\psi, t}(a,b) =  \psi(a) \overline{\psi}(b) (a/b)^{it}.
\end{equation*}
Here $\sum_{ab=n} \lambda_{\psi, t}(a,b) =: \lambda_{\psi,t}(n)$ is the $n$-th Hecke eigenvalue of a self-dual Eisenstein series on $\Gamma_0(r^2)$, and when $\psi$ is primitive, the Eisenstein series is a newform.
Let $k$ be a positive integer, and let $\theta$ run over all Dirichlet characters modulo $k$.  Let $Q \geq 1$ be a real number, and for each $Q/2 < q \leq Q$ with $(q,k) =1$, let $\chi$ run over primitive Dirichlet characters modulo $q$.  Finally, let $T \geq 1$ be a real number, and let $|t| \leq T$.  Then define $\mathcal{F}$ to consist of the characters $\chi \theta$, with corresponding data $\lambda_{\chi \theta, t}(a,b)$, with $N/2 < ab \leq N$ and $(a,b) = 1$.  We write
\begin{equation}
\label{eq:DeltaDef}
 \Delta(Q, k, T, N)
 = \max_{|{\bf \alpha}| = 1} 
 \int_{T/2 \leq t \leq T}
\sum_{\substack{Q/2 < q \leq Q \\ (q,k) = 1}}
\thinspace
\sumstar_{\chi \shortmod{q}}
\thinspace
\sum_{\theta \shortmod{k}}
\Big|
\sum_{\substack{N/2 < ab \leq N \\ (a,b) = 1}} \alpha_{a,b} \lambda_{\chi \theta,t}(a,b) 
\Big|^2 dt,
\end{equation}
which agrees with $\Delta(\mathcal{F}, N)$ for this family $\mathcal{F}$.  
The dual norm $\Delta^*(Q,k,T,N)$ is given by
\begin{equation}
\label{eq:Delta*Def}
\Delta^*(Q,k,T,N) = \max_{|{\bf \beta}| = 1}
\sum_{\substack{N/2< ab \leq N \\ (a,b) = 1}} 
\Big|
\int_{T/2 \leq t \leq T}
\sum_{\substack{Q/2 < q \leq Q \\ (q,k) = 1}}
\thinspace
\sumstar_{\chi \shortmod{q}}
\thinspace
\sum_{\theta \shortmod{k}}
\beta_{\chi, \theta, t}
\lambda_{\chi \theta, t}(a,b)
 dt
\Big|^2.
\end{equation}

As a `trivial' bound, which we mainly state for a frame of reference, one may deduce from the classical large sieve inequality the bound
\begin{equation}
\label{eq:DeltaTrivialBound}
\Delta(Q, k, T, N) \ll ( Q^2 k T \sqrt{N} + N \log{N}).
\end{equation}
Deducing the estimate \eqref{eq:DeltaTrivialBound} uses the idea of the Dirichlet hyperbola method, by summing over $a \leq \sqrt{N}$ trivially, and applying the classical large sieve to the sum over $b \ll N/a$.  

The condition $(a,b) = 1$ may be easily overlooked, yet it is vital.
The above sketch shows that the trivial bound \eqref{eq:DeltaTrivialBound} holds even without the condition $(a,b) = 1$.  In fact, if the condition $(a,b) = 1$ were to be omitted in \eqref{eq:DeltaDef}, then the term of size $Q^2 k T \sqrt{N}$ in \eqref{eq:DeltaTrivialBound} would not be removable, because one could choose $\alpha_{a,b}$ to be the indicator function that $a=b$ in \eqref{eq:DeltaDef}.
For this, note $\lambda_{\chi, t}(a,a)  = 1$ for $a$ coprime to the modulus of $\chi$.
Therefore, any substantial improvement over this trivial bound must use the condition $(a,b) = 1$.  The restriction $(a,b) =1$ is similar in spirit to the (necessary) square-free restriction when studying quadratic characters, as in \cite{HB}; for more on this point, see Section \ref{section:recursivethmFEsketch}.  We also observe that choosing $\alpha_{a,b} = \alpha_{ab}$ to depend only on the product $ab$ would give rise to sums of the form $\sum_n \alpha_n \lambda_{\psi,t}(n)$ appearing in \eqref{eq:DeltaDef}.  Then considering $n=p^2$ would lead to a large term as discussed above.

\subsection{Main results, and discussion}
\begin{mytheo}
\label{thm:mainthm}
We have
\begin{equation}
\label{eq:mainthm}
 \Delta(Q,k,T,N) \ll_{\varepsilon} (QkTN)^{\varepsilon}(Q^2 k T + N).
\end{equation}
\end{mytheo}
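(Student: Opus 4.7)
The natural approach, suggested by the abstract, is to open the $L^2$-norm squared and analyze the resulting bilinear form by a recursive inequality in the spirit of Heath-Brown's quadratic large sieve \cite{HB}. Expanding the square in \eqref{eq:DeltaDef} produces
\[
\int_{T/2}^T\sum_{q,\chi,\theta}\Big|\sum_{\substack{N/2<ab\leq N\\(a,b)=1}}\alpha_{a,b}\lambda_{\chi\theta,t}(a,b)\Big|^2 dt = \sum_{a,b,a',b'}\alpha_{a,b}\overline{\alpha_{a',b'}}\,\mathcal{K}(a,b;a',b'),
\]
where
\[
\mathcal{K}(a,b;a',b')=\int_{T/2}^{T}\Big(\frac{ab'}{a'b}\Big)^{it}\!dt\cdot\sum_{\substack{Q/2<q\leq Q\\(q,k)=1}}\sumstar_{\chi\shortmod q}\sum_{\theta\shortmod k}\chi\theta(ab')\overline{\chi\theta}(a'b).
\]
Character orthogonality in $\theta$ and in primitive $\chi$ forces $qk\mid ab'-a'b$, while the $t$-integral is bounded by $\min(T, N/|ab'-a'b|)$.

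\textbf{Diagonal.} The diagonal $ab'=a'b$ collapses, precisely because of the coprimality conditions $(a,b)=(a',b')=1$, to $a=a',\,b=b'$, producing the main term $\asymp Q^2 kT\|\alpha\|^2$. The role of $(a,b)=1$ here is essential: without it, tuples satisfying $ab'=a'b$ with $(a,b)\neq(a',b')$ would inject additional ``diagonal-like'' contributions responsible for the unwanted $\sqrt N$ factor, as discussed after \eqref{eq:DeltaTrivialBound}.

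\textbf{Off-diagonal and recursion.} For $s:=ab'-a'b\neq 0$ we have $|s|\geq Qk$ (from $qk\mid s$), and the $t$-integral weight is $\min(T,N/|s|)$. A direct Schur test on $\mathcal{K}$ yields a bound much larger than $N$, so additional saving must be extracted. The plan is to apply Poisson summation to one of the variables (say $b$), converting the congruence $ab'\equiv a'b\pmod{qk}$ into an equation on the Fourier-dual side. The transformed bilinear form is itself of large-sieve shape and may be interpreted as an instance of the theorem with strictly smaller parameters $(Q',k',T',N')$ for which $Q'^2k'T'+N' \ll (Q^2kT+N)^{1-\delta}$. Iterating $O_\varepsilon(1)$ times, with each step costing a factor $(QkTN)^{\varepsilon}$ absorbed into the final $\varepsilon$, yields the claim.

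\textbf{Main obstacle.} The technical heart is executing the recursive Poisson step so that three ingredients combine cleanly: (i) the coprimality $(a,b)=1$ is preserved through every transformation — otherwise the $\sqrt N$ loss reemerges — (ii) the archimedean parameter $t$ is dualized in parallel with the finite characters $\chi\theta$, so that $Q$, $k$, and $T$ act as a unified ``modulus'' on the dual side (in the spirit of the Conrey--Iwaniec--Soundararajan asymptotic large sieve), and (iii) divisor-function losses from the primitive-character M\"obius sieve in $q$ are bounded uniformly in $q$ and $k$. Verifying that the post-Poisson bilinear form is genuinely of the same shape as $\Delta$, with parameters geometrically smaller in the combined quantity $Q^2kT+N$, is the delicate step on which the whole recursion depends.
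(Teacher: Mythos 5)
Your plan correctly identifies the diagonal main term $Q^2kT\|\alpha\|^2$ and correctly flags the two danger points (the coprimality $(a,b)=1$ and the primitivity of $\chi$), but it does not contain the ideas that actually make the argument close, so as it stands there is a genuine gap rather than an alternative proof. The central missing mechanism is the treatment of the two ``necessary but hard to use'' conditions. Any direct use of orthogonality over primitive $\chi$ requires a M\"obius sieve over conductors, and any attempt to exploit $(a,b)=1$ after Poisson/functional-equation steps reintroduces a term of size $Q^2kT\sqrt{N}$ — exactly the obstruction described after \eqref{eq:DeltaTrivialBound}. The paper's resolution is not to ``preserve'' these conditions through the transformations, but to \emph{relax} them: one bounds $S\leq S_{>Y}$ (replacing $(a,b)=1$ by $ab/(a,b)^2>Y$, resp.\ primitivity by $\cond(\chi)>Y$), writes $S_{>Y}=S_{\infty}-S_{\leq Y}$, treats the two pieces by completely different methods, and then chooses $Y$ so that the dangerous polar terms \eqref{eq:sketchpolartermFE} and \eqref{eq:sketchpolartermFamilyAvg} arising from the two pieces cancel each other. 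You name this as ``the delicate step on which the whole recursion depends'' but propose no mechanism for it; without the cancellation the $\sqrt{N}$ loss is not removable.

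The second gap is structural: you run the recursion entirely on the family-average side (opening the square over $q,\chi,\theta,t$ and Poisson-summing in $b$), and assert that the dual form has parameters with $Q'^2k'T'+N'\ll(Q^2kT+N)^{1-\delta}$. This is unsubstantiated and fails at the balanced point $N\asymp Q^2kT$, where either transformation returns an instance of essentially the same size — the argument is circular there. The paper escapes this by attacking from both sides (the dual norm $\Delta^*$ with the functional equation of Dirichlet $L$-functions when $N\gg Q^2kT$, and the family average with the Conrey--Iwaniec--Soundararajan divisor switch when $N\ll Q^2kT$), by proving monotonicity lemmas that allow one to \emph{lengthen} $N$ or $Q$ so as to move off the critical point, and by iterating not a geometric decay of parameters but an improvement of exponents $e\mapsto 2-1/e$ starting from the trivial bound $e=2$, which converges to $1$ after $O(1/\varepsilon)$ alternating applications of the two recursions. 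Your sketch would need all three of these ingredients — the inclusion--exclusion with polar-term cancellation, the second (dual/functional-equation) recursion, and the monotonicity step — before it could be completed.
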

This estimate is optimal (up to the $\varepsilon$-aspect) by general principles (see \cite[Chapter 7]{IK}).  We may interpret this as a spectral large sieve inequality for the family of trivial nebentypus newform Eisenstein series on $\Gamma_0(q^2 k^2)$, with varying level $q$.  
Theorem \ref{thm:mainthm} appears to be the first sharp large sieve inequality for a $GL_2$ family with varying levels.  The classical large sieve inequality can be interpreted as a $GL_1$ large sieve inequality, while Heath-Brown's celebrated quadratic large sieve can be viewed as an estimate for the sub-family of self-dual $GL_1$ forms.  
The $GL_2$ families of varying nebentypus do not seem to have strong orthogonality properties, as shown by Iwaniec and Li \cite{IwaniecLi}.

We also have an additive character variant on Theorem \ref{thm:mainthm}.
\begin{mytheo}
\label{thm:mainthmadditive}
Define a norm
\begin{equation*}
\Delta_{\text{add.}}(Q,N)
 = \max_{|{\bf \alpha}| = 1} 
\sum_{\substack{Q/2 < q \leq Q \\ (q,k) = 1}}
\thinspace
\sumstar_{t \shortmod{q}}
\Big|
\sum_{\substack{N/2 < ab \leq N \\ (a,b) = 1 \\ (ab,q) = 1}} \alpha_{a,b} e_q(t a \overline{b}) 
\Big|^2 dt.
\end{equation*}
Then
$\Delta_{\text{add.}}(Q,N) \ll (Q^2 + N)^{1+\varepsilon}$.
\end{mytheo}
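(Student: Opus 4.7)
My plan is to deduce the additive bound from the machinery behind Theorem \ref{thm:mainthm} by Fourier-expanding $e_q(t a \overline{b})$ into Dirichlet characters. For $(t a \overline{b}, q) = 1$, Gauss-sum inversion gives
\begin{equation*}
e_q(t a \overline{b}) = \frac{1}{\phi(q)} \sum_{\chi \bmod q} \tau(\overline{\chi}) \chi(t) \chi(a) \overline{\chi}(b), \qquad \tau(\chi) = \sum_{u \bmod q} \chi(u) e_q(u).
\end{equation*}
Substituting into the squared modulus and summing over primitive $t$ modulo $q$, the orthogonality $\sum_t^{*} \chi_1(t)\overline{\chi_2}(t) = \phi(q) \mathbf{1}_{\chi_1 = \chi_2}$ collapses the resulting double character sum to
\begin{equation*}
\sum_{t \bmod q}^{*} \Big| \sum_{a,b} \alpha_{a,b} e_q(t a \overline{b}) \Big|^2 = \frac{1}{\phi(q)} \sum_{\chi \bmod q} |\tau(\overline{\chi})|^2 |A_q(\chi)|^2,
\end{equation*}
where $A_q(\chi) := \sum_{a,b} \alpha_{a,b} \chi(a) \overline{\chi}(b)$ inherits all $(a,b)$-constraints.

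Next I would decompose each $\chi \bmod q$ as a lift of a primitive $\chi^{*}$ mod $q^{*} \mid q$, using $|\tau(\overline{\chi})|^2 \leq q^{*}$ with vanishing unless $q/q^{*}$ is squarefree and coprime to $q^{*}$. The remaining $q$-dependence in $A_q(\chi^{*})$, caused by the condition $(ab, q/q^{*}) = 1$, is separated via M\"obius: using $(a,b) = 1$ one writes
\begin{equation*}
\mathbf{1}_{(ab,q/q^{*})=1} = \sum_{\substack{d_1 d_2 \mid q/q^{*} \\ (d_1,d_2) = 1}} \mu(d_1 d_2) \mathbf{1}_{d_1 \mid a,\, d_2 \mid b}.
\end{equation*}
Substituting $(a,b) = (d_1 a', d_2 b')$ expresses $A_q(\chi^{*})$ as a sum of $2^{\omega(q/q^{*})}$ terms of the form $\widetilde{A}_{d_1, d_2}(\chi^{*}) = \sum \alpha_{d_1 a', d_2 b'} \chi^{*}(a') \overline{\chi^{*}}(b')$ whose length is $\sim N/(d_1 d_2)$ and which do not otherwise depend on $q$.

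I would then swap to sum first over $q^{*}$ and $(d_1, d_2)$, evaluating the inner $q$-sum over $q \in (Q/2, Q]$ with $q^{*} d_1 d_2 \mid q$ as $\sum_m 2^{\omega(m)}/\phi(q^{*} d_1 d_2 m) \ll (\log Q)^{O(1)}/(q^{*} d_1 d_2)$, which absorbs the outer factor $q^{*}/\phi(q)$. Applying a pointwise-in-$t$ multiplicative large sieve
\begin{equation*}
\sum_{q \leq Q} \sideset{}{^*}\sum_{\chi \bmod q} \Big| \sum_{a',b'} \beta_{a',b'} \chi(a') \overline{\chi}(b') \Big|^2 \ll (Q N')^{\varepsilon} (Q^2 + N') \|\beta\|^2
\end{equation*}
(for sequences supported on $N'/2 < a' b' \leq N'$, $(a',b')=1$) to each $(d_1, d_2)$-family, together with the divisor-moment bound $\sum_{d_1, d_2} 4^{\omega(d_1 d_2)}/\phi(d_1 d_2) \cdot (Q^2 + N/(d_1 d_2)) \ll (Q^2 + N)(\log Q)^{O(1)}$, yields $\Delta_{\text{add.}}(Q, N) \ll (Q^2 + N)^{1+\varepsilon}$.

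The main obstacle is extracting the pointwise-in-$t$ large sieve, since Theorem \ref{thm:mainthm} is stated with a $t$-integration built into its norm and simple averaging cannot be inverted to yield a pointwise upper bound. My expectation is that $|(a/b)^{it}| = 1$ makes $(a/b)^{it}$ an absorbable unitary modification of the coefficient sequence $\beta$, so the recursive proof of Theorem \ref{thm:mainthm} --- which operates on the combinatorial structure of the $(a,b)$-sum rather than on the $t$-variable --- should go through verbatim with $t = 0$ fixed, yielding the pointwise bound with the $T$-factor removed from the main term $Q^2 k T$.
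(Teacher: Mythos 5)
Your Gauss--sum conversion is exactly the route the paper intends: the paper's entire ``proof'' of Theorem \ref{thm:mainthmadditive} is the single sentence citing \cite[Section 7.5]{IK}, i.e.\ expanding $e_q(ta\overline{b})$ in multiplicative characters, using orthogonality in $t$, controlling imprimitive $\chi$ via $|\tau(\overline{\chi})|^2\le q^*$, and stripping the coprimality condition by M\"obius. That part of your write-up is sound (the divisor-type losses from the $(d_1,d_2)$-decomposition and from $\sum_{{\bf d}}|\beta^{({\bf d})}|^2$ are $N^{\varepsilon}$, which the statement tolerates).

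The genuine gap is the step you flag yourself: you need the multiplicative large sieve at a \emph{fixed} $t$, whereas Theorem \ref{thm:mainthm} carries the $t$-integral inside the norm, and (as you correctly observe) the integrated norm is \emph{dominated by} the pointwise one, not the reverse. Your proposed fix --- that the whole recursive proof ``should go through verbatim with $t=0$ fixed'' --- is an assertion, not an argument, and it is the wrong tool: one should not have to re-audit the recursion. The correct short repair is: (i) since $\alpha_{a,b}\mapsto \alpha_{a,b}(b/a)^{it_0}$ is unitary on $\ell^2$, it suffices to prove the pointwise bound at some $t_0\in[1/2,1]$; (ii) apply the Sobolev/Gallagher inequality
\begin{equation*}
|f(t_0)|^2 \ll \int_{1/2}^{1}|f(t)|^2\,dt + \Big(\int_{1/2}^{1}|f|^2\Big)^{1/2}\Big(\int_{1/2}^{1}|f'|^2\Big)^{1/2},
\qquad f(t)=\sum_{a,b}\alpha_{a,b}\chi(a\overline{b})(a/b)^{it},
\end{equation*}
noting that $f'$ is the same bilinear form with coefficients $\alpha_{a,b}\log(a/b)\ll \alpha_{a,b}\log N$; summing over $q,\chi$ and using Cauchy--Schwarz, both integrals are bounded by $(\log N)^{2}\,\Delta(Q,1,1,N)\,|\alpha|^2$, which gives the pointwise large sieve with only an $N^{\varepsilon}$ loss. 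With that lemma inserted, your deduction closes.
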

Theorem \ref{thm:mainthmadditive} follows quickly from Theorem \ref{thm:mainthm}, by the method in \cite[Section 7.5]{IK}.  
We have omitted the $T$ and $k$ aspects solely to simplify the expressions; hybrid bounds analogous to \eqref{eq:mainthm} hold for the additive characters as well.

We may interpet Theorem \ref{thm:mainthm} as a large sieve inequality for \emph{rationals}, which we now explain.  
Let $v_p$ be the usual $p$-adic valuation.
For $q \geq 1$, let 
$\mathbb{Q}_{(q)} = \{ x \in \mathbb{Q} : v_p(x) \geq 0 \text{ for all } p \mid q \}$, which is a ring.  Indeed, with
the multiplicative set $S$ defined by
 $S = \{ n \in \mz : (n,q) = 1\}$, then $\mathbb{Q}_{(q)} = S^{-1} \mathbb{Z}$, the localization of $\mathbb{Z}$ by $S$.  
 There exists a natural reduction map $\redu_q : \mathbb{Q}_q \rightarrow \mz/q\mz$.  The reduction map may be restricted to $\mathbb{Q}_{(q)}^{\times} = \{ x \in \mathbb{Q} : v_p(x) = 0 \text{ for all } p \mid q \}$, which is a multiplicative subgroup of $\mathbb{Q}^{\times}$.  
If $\chi$ is a Dirichlet character modulo $q$, and $n \in \mathbb{Q}_{(q)}^{\times}$, then  define $\chi(n)$ by $\chi(\redu_q(n))$.  That is, if $n = a/b \in \mathbb{Q}_{(q)}^{\times}$, then $\chi(n) = \chi(a \overline{b})$.
For $n = a/b \in \mathbb{Q}^{\times}$ in lowest terms, define $\hit(n) = |ab|$, which is a cousin of a height function.
Note that $|\{ n \in \mathbb{Q}^{\times} : \hit(n) \leq X \}| =  X^{1+o(1)}$.
\begin{mytheo}
\label{thm:largesieverationalversion}
We have
\begin{equation}
\sum_{q \leq Q}
\thinspace
\sumstar_{\chi \shortmod{q}}
\Big|
\sum_{\substack{n \in \mathbb{Q}_{(q)}^{\times} \\ \hit(n) \leq N}}
\alpha_n \chi(n)
\Big|^2
\ll (Q^2 + N)^{1+\varepsilon}
\sum_{\substack{n \in \mathbb{Q}^{\times} \\ \hit(n) \leq N}}
|\alpha_n|^2. 
\end{equation}
\end{mytheo}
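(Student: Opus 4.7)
The plan is to derive Theorem \ref{thm:largesieverationalversion} from Theorem \ref{thm:mainthm} specialized to $k=1$, $T=1$; the only substantive issue will be to pass from the integrated $t$-norm to a pointwise bound at $t = 0$.

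A dyadic decomposition of both $q$ and $\hit(n)$ reduces matters to $Q/2 < q \leq Q$ and $N/2 < \hit(n) \leq N$, with the logarithmic losses absorbed into $(Q^2+N)^\varepsilon$. For such $n$, write $n = \epsilon a/b$ with $\epsilon \in \{\pm 1\}$ and $a, b \geq 1$ coprime. The condition $n \in \mathbb{Q}_{(q)}^\times$ forces $(ab, q) = 1$, which is automatic from the vanishing of a primitive $\chi$ on non-units. Since $\chi(n) = \chi(\epsilon)\chi(a\overline{b})$ and $|\chi(\epsilon)| = 1$, the bound $|X+Y|^2 \leq 2|X|^2 + 2|Y|^2$ splits off the $\epsilon$-sum, reducing the task to bounding $\sum_{q,\chi}\bigl|\sum_{a,b}\gamma_{a,b}\chi(a)\overline{\chi(b)}\bigr|^2$ for coefficients $\gamma$ supported on $(a,b)=1$, $N/2 < ab \leq N$.

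Let $S_{q,\chi}(u) := \sum_{a,b}\gamma_{a,b}\chi(a)\overline{\chi(b)}(a/b)^{iu}$, so the target is $\sum_{q,\chi}|S_{q,\chi}(0)|^2$. Applying Theorem \ref{thm:mainthm} with $k=1$, $T=1$ to the shifted coefficients $\gamma_{a,b}(a/b)^{-3i/4}$ (which have the same $\ell^2$ norm as $\gamma$) and substituting $u = t - 3/4$ in the integration over $[1/2, 1]$ gives
\begin{equation*}
\int_{-1/4}^{1/4}\sum_{q,\chi}|S_{q,\chi}(u)|^2\,du \ll (QN)^\varepsilon(Q^2+N)\|\gamma\|^2.
\end{equation*}
Applying Theorem \ref{thm:mainthm} similarly to $i\gamma_{a,b}\log(a/b)(a/b)^{-3i/4}$, whose $\ell^2$ norm is at most $(\log N)\|\gamma\|$ since $|\log(a/b)| \leq \log N$ on the support, yields the analogous estimate for $S'$ with an extra $(\log N)^2$ factor.

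The final step uses the elementary inequality
\begin{equation*}
|S(0)|^2 \leq \frac{1}{a}\int_{-a}^a |S(u)|^2\,du + a\int_{-a}^a |S'(u)|^2\,du,
\end{equation*}
valid for any smooth $S$ and $a > 0$ (proved from $S(0) = S(u) - \int_0^u S'(v)\,dv$ by Cauchy-Schwarz and averaging in $u$). Summing over $q$, $\chi$, bounding the $[-a,a]$ integrals by the $[-1/4,1/4]$ ones (valid for $a \leq 1/4$), and choosing $a = 1/\log N$ to balance the two contributions gives $\sum_{q,\chi}|S_{q,\chi}(0)|^2 \ll (\log N)(QN)^\varepsilon(Q^2+N)\|\gamma\|^2$, which suffices after absorbing $\log N$ into $(Q^2+N)^{\varepsilon}$. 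The key (and essentially only) point beyond Theorem \ref{thm:mainthm} is this pointwise recovery; it succeeds precisely because the Dirichlet polynomial $S$ has spectrum in $[-\log N, \log N]$, so a window of size $1/\log N$ already determines its value at $0$.
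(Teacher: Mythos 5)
Your proof is correct, but it is organized quite differently from the paper's. The paper disposes of Theorem \ref{thm:largesieverationalversion} in two sentences, asserting that it is ``simply a restatement'' of Theorem \ref{thm:mainthm} with $k=1$ and ``the omission of $T$'' --- i.e.\ the author's intent is that the entire proof of Theorem \ref{thm:mainthm} runs verbatim with the $t$-aspect deleted from the outset, so no deduction is recorded. You instead treat Theorem \ref{thm:mainthm} as a black box and genuinely \emph{derive} the pointwise statement at $t=0$ from the $t$-integrated norm: the unimodular twist by $(a/b)^{-3i/4}$ to recenter the integration window at $u=0$, the companion estimate for $S'$ (costing only $\log N$ in the coefficient norm since the spectrum of the Dirichlet polynomial lies in $[-\log N,\log N]$), and the Gallagher/Sobolev inequality with window $a=1/\log N$ are all sound, as are the preliminary reductions (dyadic decomposition inside the square via Cauchy--Schwarz, splitting off the sign $\epsilon$ using $|\chi(-1)|=1$, and noting that the restriction to $\mathbb{Q}_{(q)}^{\times}$ matches the vanishing of $\chi$ off units). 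The trade-off: the paper's route requires trusting that every step of a long recursive argument survives the specialization, while yours is a short, checkable formal consequence of the stated theorem --- arguably the more honest deduction, at the harmless cost of a factor $\log N$ absorbed into $(Q^2+N)^{\varepsilon}$. The only nitpick is that your displayed Sobolev inequality holds with absolute constants (a factor $2$ and $4a$ rather than $a$ on the two terms), which of course does not affect the conclusion.
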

This is simply a restatement of Theorem \ref{thm:mainthm} in this notation, with $k=1$ and the omission of $T$.  These specializations are not necessary, and are only in place to de-clutter the statement.

From Theorem \ref{thm:largesieverationalversion} one can also easily derive results about rationals ordered by the more standard height function.  For $n=a/b \in \mathbb{Q}^{\times}$ in lowest terms, let $\mathrm{Ht}(n) = \max( |a|, |b|)$.  Note that $\hit(n) \leq \mathrm{Ht}(n)^2$, from which we immediately deduce:
\begin{mycoro}
\label{coro:largesieverationalversionStandardHeight}
We have
\begin{equation*}
\sum_{q \leq Q}
\thinspace
\sumstar_{\chi \shortmod{q}}
\Big|
\sum_{\substack{n \in \mathbb{Q}_{(q)}^{\times} \\ \mathrm{Ht}(n) \leq N}}
\alpha_n \chi(n)
\Big|^2
\ll (Q^2 + N^2)^{1+\varepsilon}
\sum_{\substack{n \in \mathbb{Q}^{\times} \\ \mathrm{Ht}(n) \leq N}}
|\alpha_n|^2. 
\end{equation*}
\end{mycoro}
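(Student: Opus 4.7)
The plan is straightforward: this is a direct deduction from Theorem \ref{thm:largesieverationalversion} via the elementary inequality $\hit(n) \leq \mathrm{Ht}(n)^2$ for $n = a/b \in \mathbb{Q}^\times$ in lowest terms, which holds since $|ab| \leq \max(|a|,|b|)^2$.

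Concretely, given a coefficient sequence $(\alpha_n)$ supported on $\{n \in \mathbb{Q}^\times : \mathrm{Ht}(n) \leq N\}$, I would extend it by zero to all $n \in \mathbb{Q}^\times$ with $\hit(n) \leq N^2$. Since $\mathrm{Ht}(n) \leq N$ implies $\hit(n) \leq N^2$, this extension is well-defined and the inner sum on the left-hand side is unchanged. Applying Theorem \ref{thm:largesieverationalversion} with $N$ replaced by $N^2$ then yields
\begin{equation*}
\sum_{q \leq Q} \thinspace \sumstar_{\chi \shortmod{q}} \Big| \sum_{\substack{n \in \mathbb{Q}_{(q)}^{\times} \\ \mathrm{Ht}(n) \leq N}} \alpha_n \chi(n) \Big|^2 \ll (Q^2 + N^2)^{1+\varepsilon} \sum_{\substack{n \in \mathbb{Q}^\times \\ \hit(n) \leq N^2}} |\alpha_n|^2 = (Q^2 + N^2)^{1+\varepsilon} \sum_{\substack{n \in \mathbb{Q}^\times \\ \mathrm{Ht}(n) \leq N}} |\alpha_n|^2,
\end{equation*}
where the last equality holds because $(\alpha_n)$ is supported on the smaller set. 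This is precisely the claimed bound, so there is no real obstacle; the only point to verify is the height comparison, which is immediate from the definition.
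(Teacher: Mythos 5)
Your proposal is correct and is exactly the paper's deduction: the paper notes $\hit(n) \leq \mathrm{Ht}(n)^2$ and immediately applies Theorem \ref{thm:largesieverationalversion} with $N$ replaced by $N^2$. Nothing further is needed.
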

This is sharp, since $|\{ n \in \mathbb{Q}^{\times} : \mathrm{Ht}(n) \leq X \}| = X^{2+o(1)}$.  
Since Theorem \ref{thm:largesieverationalversion} easily implies Corollary \ref{coro:largesieverationalversionStandardHeight}, but not vice-versa, this supports our usage of $\hit$ in place of $\mathrm{Ht}$.

For $n \in \mathbb{Q}^{\times}$, one may define $\alpha_n = e(n)$, or $\alpha_{a/b} = e_b(\overline{a})$, etc.   These examples illustrating Theorem \ref{thm:largesieverationalversion} are somewhat similar to the quantities studied in \cite{DFIBilinearKloosterman}.

The proof of Theorem \ref{thm:mainthm} attacks the problem from both sides, via $\Delta$ and $\Delta^*$.  In this sense, the proof has new features not seen in previous large sieve inequality bounds.  Very briefly, the strategy of proof is as follows.  If $N \gg Q^2 kT$, then 
we study the dual norm $\Delta^*$ and apply the functional equation of Dirichlet $L$-functions.
The dual side is effective in this range of parameters because the functional equation will shorten the lengths of summation.  On the other hand, if $N \ll Q^2 kT$, then we more directly study the family average.
The main tool on this side is the divisor-switching method used by Conrey-Iwaniec-Soundararajan on the asymptotic large sieve \cite{ConreyIwaniecSound} (see also \cite[p.210]{Hooley}).
On both sides, we derive a recursive bound which relates the norm to itself, but with different (smaller) parameters.

When $N \approx Q^2 kT$, then both methods are essentially circular.  The key to breaking out of this deadlock is to use monotonicity, lengthening one of the sums. 
The use of the functional equation and monotonicity were both crucial tools in Heath-Brown's quadratic large sieve.  A major difference between our method and Heath-Brown's is that in the quadratic case, the norm was almost self-dual by quadratic reciprocity.  This property completely fails in our situation.  

We now discuss the two main workhorse results used to prove Theorem \ref{thm:mainthm}, both of which require defining some variants on $\Delta$.
Let
\begin{equation}
\label{eq:Delta''def}
 \Delta'(Q,k,T,N) = 
 \max_{\substack{X, R, U, C \in \mr_{\geq 1}, \ell \in \mz_{>0} \\  X R^2 \ell U \leq Q^2 k T \\ X \leq C }} 
  X
 \Delta\Big(R, \ell, U, \frac{N}{C} \Big).
\end{equation}
Note trivially $\Delta(Q, k, T, N)
\leq \Delta'(Q,k,T,N)$, by taking $X=1$, $R=Q$, $\ell = k$, $U=T$, $C=1$.  Theorem \ref{thm:mainthm} will show these norms are essentially the same order of magnitude.  On a first pass, the reader is encouraged to think of $\Delta'(Q,k,T,N)$ as $\Delta(Q,k,T,N)$ itself.
Another notational convenience is to write
\begin{equation}
\label{eq:Deltabardef}
\overline{\Delta}(Q, k, T, N) = 
\max_{\substack{Q \leq R \leq Q (Q^2 kTN)^{\varepsilon} \\ T \leq U \leq T (Q^2 kTN)^{\varepsilon} \\ N \leq M \leq N (Q^2 kTN)^{\varepsilon}}} \Delta(R, k, U, M),
\end{equation}
and similarly for other norms, such as $\overline{\Delta'}$.
In practice, the choices of $\varepsilon$ will be either unimportant, or apparent from the context, and no confusion should arise from suppressing them on the left hand side of \eqref{eq:Deltabardef}.

\begin{mytheo}[Recursive functional equation]
\label{thm:recursivethmFE}
Suppose $N \gg Q^2 k T (QkT)^{-\varepsilon}$.  Then
\begin{equation}
\Delta(Q, k, T, N) \ll (QkTN)^{\varepsilon} \Big[N + \frac{N}{Q^2 kT}  \overline{\Delta'}\Big(Q, k, T, \frac{Q^4 k^2 T^2}{N}\Big) \Big].
\end{equation}
\end{mytheo}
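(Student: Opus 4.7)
The plan is to exploit duality and the functional equation of Dirichlet $L$-functions. By the duality principle, $\Delta(Q,k,T,N) = \Delta^*(Q,k,T,N)$, so I will bound the dual norm. Expanding the square in the definition of $\Delta^*$ yields
\begin{equation*}
\Delta^* = \max_{\|\beta\|=1}\sum_{i,j}\beta_i\overline{\beta_j}K(i,j), \qquad K(i,j) = \sum_{\substack{N/2 < ab \leq N \\ (a,b)=1}}\psi_{ij}(a)\overline{\psi_{ij}}(b)(a/b)^{i(t_i-t_j)},
\end{equation*}
where $i = (q_i,\chi_i,\theta_i,t_i)$ indexes the family and $\psi_{ij} := \chi_i\theta_i\overline{\chi_j\theta_j}$. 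The near-diagonal terms, where $\chi_i\theta_i = \chi_j\theta_j$ and $|t_i - t_j|$ is small, contribute at most $\#\{(a,b): ab \sim N, (a,b)=1\} \ll N$ to the norm, which is the first term on the right hand side of the claim.

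For the off-diagonal, $\psi_{ij}$ is a non-trivial character, and its primitive inducing character $\psi_{ij}^*$ has conductor $c$ dividing $\mathrm{lcm}(q_i,q_j)\,k$, with the crucial bound $c \leq Q^2 k$. After inserting a smooth dyadic weight on $ab$ and removing coprimality by M\"obius, I represent $K(i,j)$ via Mellin inversion as a contour integral involving $L(\psi_{ij}^*, s - i\tau)L(\overline{\psi_{ij}^*}, s + i\tau)$, where $\tau = t_j - t_i$ and where a local zeta factor $Z(s,\psi_{ij})$ appears from the coprimality inclusion--exclusion. Shifting the contour past $\real(s)=1/2$ and applying the Dirichlet functional equation to each $L$-factor introduces conductor factors $c^{1-2s}$ and Gamma ratios of magnitude $(1+|\tau|)^{1-2s}$; by Stirling, this localizes the transformed integrand to a dual sum of length $M \ll c^2(1+|\tau|)^2/N$. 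The worst case is $M \ll Q^4 k^2 T^2/N$, matching the fourth argument of $\overline{\Delta'}$ in the claim. The hypothesis $N \gg Q^2 kT(QkT)^{-\varepsilon}$ ensures $M \ll Q^2 kT(QkT)^\varepsilon$, placing $\overline{\Delta'}(Q,k,T,M)$ in a useful regime.

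The decisive feature of the dual expression is that the character piece factorizes,
\begin{equation*}
\overline{\psi_{ij}^*}(a')\psi_{ij}^*(b') = [\overline{\chi_i\theta_i}(a')\chi_i\theta_i(b')]\cdot[\chi_j\theta_j(a')\overline{\chi_j\theta_j}(b')],
\end{equation*}
decoupling the $i$- and $j$-dependence that was entangled through $\psi_{ij}$. Substituting back into the bilinear form and interchanging the order of summation, the off-diagonal reassembles into a perfect square of the form
\begin{equation*}
\sum_{\substack{(a',b')=1 \\ a'b' \asymp M}}\Big|\sum_i \beta_i \overline{\chi_i\theta_i}(a')\chi_i\theta_i(b')(b'/a')^{it_i}\Big|^2,
\end{equation*}
weighted by smooth remnants of the Mellin--FE transform. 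After the symmetric swap $(a',b') \leftrightarrow (b',a')$ this is a norm of precisely the same shape as $\Delta^*$, now at length $M$, and hence bounded by $\Delta(Q,k,T,M)$. Careful bookkeeping of the conductor powers $c^{1-2s}$, the Gamma asymptotics on the new contour, and the Plancherel-style normalization from the Mellin reassembly produces the prefactor $N/(Q^2 kT)$ up to $(QkTN)^\varepsilon$.

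The use of $\overline{\Delta'}$ rather than $\Delta$ is forced by three sources of variability that must be absorbed along the way: the conductor $c$ varies with $(q_i,q_j)$ rather than being fixed at $Q^2 k$, so smaller $M = c^2(1+|\tau|)^2/N$ is possible and must be traded against a smaller effective family via the constraint $XR^2\ell U \leq Q^2 kT$ built into $\Delta'$; dyadic splitting $a \sim A,\,b \sim B$ with $AB \sim N$ yields sub-norms where the effective $t$-length has been compressed by stationary phase; and the M\"obius reductions handling non-primitivity of $\psi_{ij}$ and the coprimality condition enlarge the ambient parameters by $(QkTN)^\varepsilon$, absorbed by the overline. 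The principal technical difficulty I expect is the normalization bookkeeping --- ensuring that the prefactor lands at exactly $N/(Q^2 kT)$ to the claimed precision --- since any mistracking of the FE conductor powers, Gamma asymptotics, or divisor-counting losses from inclusion--exclusion would disrupt the recursive structure on which the rest of the paper depends. None of these manipulations is individually beyond standard techniques, and the flexible norm $\overline{\Delta'}$ is tailored to accommodate exactly the variability they produce.
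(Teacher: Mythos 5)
Your overall architecture (work on the dual side, diagonal gives $N$, functional equation shortens the sum to length $Q^4k^2T^2/N$, reassemble into a norm at that length with prefactor $N/(Q^2kT)$) matches the paper's, but there is a genuine gap at the single most important step: how the condition $(a,b)=1$ is handled. You propose to remove it by M\"obius inversion before applying the functional equation. Summing $\sum_{d\mid(a,b)}\mu(d)$ over all $d$ does not produce a ``local zeta factor''; it produces the global factor $\zeta(2s)^{-1}$ (times finitely many Euler factors at primes dividing the modulus) in the Mellin integrand
\begin{equation*}
\widetilde{w}(s)\,N^{s}\,\frac{L(s-i\tau,\psi)L(s+i\tau,\overline{\psi})}{\zeta(2s)}\prod_{p\mid q}(\cdots).
\end{equation*}
The functional equation requires shifting the contour across $\real(s)=1/2$, and that shift is blocked by the zeros of $\zeta(2s)$, which lie throughout $0<\real(s)<1/2$ and over which one has no unconditional control. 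The same obstruction appears in Heath-Brown's quadratic large sieve (there for the squarefree condition), and it is precisely why both papers avoid M\"obius here.

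The paper's workaround is the positivity device you are missing: one bounds $S\leq S_{>Y}$, where $S_{>Y}$ relaxes $(a,b)=1$ to $ab/(a,b)^2>Y$, and then writes $S_{>Y}=S_{\infty}-S_{\leq Y}$. In $S_{\infty}$ the variables $a,b$ are completely unconstrained, so the sum factors cleanly as $L(s,\Phi)L(s,\overline{\Phi})$ with no $\zeta(2s)^{-1}$, and the functional equation applies; $S_{\leq Y}$ is a short sum handled recursively. Each piece produces a pole at $s=1/2$ of size up to $Q^2kT\sqrt{N}$ --- exactly the trivial-bound term the theorem must beat --- and these two polar terms cancel precisely when $Y=Q^4k^2T^2/N$. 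Your sketch has no mechanism for this cancellation, so even if the contour shift could be performed, the residue term would destroy the claimed bound. Two smaller gaps in the same vein: after the functional equation the dual sum runs over \emph{all} $a',b'$, so landing in a $\Delta$-norm (which requires $(a',b')=1$) forces a second gcd extraction and a second $s=1/2$ pole, which is the other half of the cancellation; and the Gamma ratio depends on $t_i-t_j$ and does not factor in $i$ and $j$, so the ``perfect square'' reassembly needs the Fourier-analytic separation the paper supplies in Lemma \ref{lemma:archimedeanseparationofvariables}.
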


We also derive a recursive bound on $\Delta$ by
the family average approach. 
\begin{mytheo}[Recursive family average]
\label{thm:familyavgthm}
Suppose
$Q^2 k T \gg  N (QkT)^{-\varepsilon}$.  Then
\begin{equation}
\Delta(Q, k, T, N) \ll 
(QkTN)^{\varepsilon} \Big[
Q^2 kT + 
\frac{Q^2 k T}{N} \overline{\Delta'}\Big(\frac{N}{kQT}, k, T, N\Big)
\Big]
.
\end{equation}
\end{mytheo}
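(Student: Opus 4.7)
Following the family-average strategy of \cite{ConreyIwaniecSound}, I would open the square in the definition of $\Delta(Q,k,T,N)$, apply orthogonality of the characters, and treat the off-diagonal by divisor switching. Expanding the square and executing orthogonality of all $\theta$ mod $k$ (yielding $\phi(k)\mathbf{1}(ab' \equiv a'b \pmod{k})$) and, via Möbius inversion, of the primitive $\chi$ mod $q$ (yielding $\phi(q)\mathbf{1}(ab' \equiv a'b \pmod{q})$ as the principal term), together with the archimedean integral $\int_{T/2}^T (ab'/a'b)^{it}\,dt \ll \min(T, N/|ab'-a'b|)$, reduces matters to a bilinear form in $\alpha_{a,b}\overline{\alpha_{a',b'}}$ over coprime pairs of size $\asymp N$, subject to $ab' \equiv a'b \pmod{qk}$ and $|ab'-a'b| \ll N(QkTN)^{\varepsilon}/T$. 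The diagonal $ab' = a'b$ forces $(a,b) = (a',b')$ by coprimality and contributes $\ll Q^2 kT$, yielding the first term in the claimed bound.

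For the off-diagonal, write $ab' - a'b = hqk$ with $h \in \mathbb{Z}\setminus\{0\}$; the size constraint gives $|h| \leq H := N(QkTN)^{\varepsilon}/(QkT)$, and the hypothesis $Q^2 kT \gg N(QkT)^{-\varepsilon}$ ensures $H \ll Q$. The \emph{divisor switch} reverses the roles of $q$ and $h$: I fix $h$ (after a dyadic splitting in $|h|$), view $q = (ab'-a'b)/(hk)$ as determined by the remaining variables, and record the retained constraint $q \in (Q/2, Q]$ as the size condition $|ab' - a'b| \asymp Q|h|k$. To repackage this as a large-sieve norm with new modulus $|h|$, I would re-introduce orthogonality in reverse: detect $ab' \equiv a'b \pmod{|h|k}$ via all Dirichlet characters mod $|h|k$, then decompose into primitive characters modulo divisors of $|h|k$. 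The principal term (primitive mod the full $|h|k$) reproduces precisely the shape of $\Delta(H, k, T, N)$, while the secondary terms are absorbed into $\Delta'$ via the $(X, R, \ell, C)$-flexibility in \eqref{eq:Delta''def}; with $R = H$, $\ell = k$, $U = T$, $X = 1$, the budget $X R^2 \ell U \leq (N/(kQT))^2 kT = N^2/(kQ^2T)$ is exactly saturated.

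Tracking prefactors --- $\phi(q)\sim Q$ and $\phi(k)\sim k$ from orthogonality, $T$ from the archimedean integral at small phase, and the ``re-insertion'' factor $\phi(|h|k)T \sim HkT$ that reappears when one repackages into the new $\Delta$ --- yields the overall ratio $Q/H = Q^2 kT / N$, so the off-diagonal is bounded by $(QkTN)^{\varepsilon}(Q^2 kT/N)\overline{\Delta'}(N/(kQT), k, T, N)$, as required. The main obstacle is precisely this final reassembly step: converting the divisor-switched bilinear form into an honest instance of $\overline{\Delta'}$ at the new parameters. One must carefully handle the secondary contributions from both the original Möbius inversion (primitive $\chi$ mod $q$) and its inverse (primitive characters modulo divisors of $|h|k$) --- this is exactly what necessitates $\overline{\Delta'}$ rather than $\overline{\Delta}$ --- together with the delicate interplay between the coprimality conditions $(a,b)=(a',b')=1$, $(qk, ab'a'b)=1$, $(|h|k, ab'a'b)=1$ and the divisor switch.
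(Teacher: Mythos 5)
Your proposal has the right skeleton for the off-diagonal (orthogonality, divisor switch $ab'-a'b=hqk$ with $|h|\ll N/(QkT)$, re-detection of the congruence by characters to the new small modulus), and your bookkeeping of the prefactor $Q^2kT/N$ and of the budget $XR^2\ell U\leq N^2/(kQ^2T)$ in \eqref{eq:Delta''def} is correct. But there is a genuine gap at the step where you dispose of primitivity ``via M\"obius inversion'' and assert that the secondary terms ``are absorbed into $\Delta'$.'' They are not. When you write $\sumstar_{\chi\bmod q}=\sum_{d\mid q}\mu(q/d)\sum_{\chi\bmod d}$ and apply orthogonality at each modulus $d$, the characters of small conductor produce a contribution which (even granting Theorem \ref{thm:mainthm} itself) is of size up to $QN|\alpha|^2$ --- this is exactly the term \eqref{eq:sketchpolartermFamilyAvg} discussed in the proof sketch, where ``the term $QN$ is the culprit.'' Under the hypothesis $N\ll Q^2kT$ one can have $N\gg QkT$, in which case $QN\gg Q^2kT$, so this term exceeds the bound you are trying to prove and cannot be absorbed into either term of Theorem \ref{thm:familyavgthm}. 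No amount of flexibility in the parameters of $\Delta'$ rescues a term that is genuinely too large.

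The missing idea is the positivity/cancellation mechanism that is the structural heart of the paper's argument (inherited from Heath-Brown's quadratic large sieve). One first bounds $S\leq S_{>Y}$ by restricting to conductors exceeding a parameter $Y$, then writes $S_{>Y}=S_{\infty}-S_{\leq Y}$. Each piece is analyzed by a contour shift producing a main (polar) term plus a controllable remainder: on the elementary side the pole at $s=1$ of a zeta factor coming from summing over the imprimitive part of the modulus, and on the divisor-switch side the pole at $s=1$ arising after detecting the switched congruence and making the new characters primitive. The choice $Y\asymp N/(QkT)$ is engineered precisely so that these two polar terms cancel \emph{exactly} (in the paper this is the identity $S_1=S_{\leq Y}^{(0)}$), leaving only remainders that genuinely fit into $\overline{\Delta'}(N/(kQT),k,T,N)$. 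Without this cancellation your argument stalls at the $QN|\alpha|^2$ obstruction. A secondary, more technical omission: after the divisor switch the variables $a,b,a',b'$ are coupled through the weight in $ab'-a'b$ and through the constraint $q\in(Q/2,Q]$, and one needs an explicit separation-of-variables device (Lemma \ref{lemma:separationofvariablesDeterminant} in the paper) before the expression can be recognized as an instance of the norm $\Delta$ at the new parameters; taking absolute values of the $t$-integral as early as you do would also destroy the $t$-structure needed to reassemble that norm.
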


The proofs of Theorems \ref{thm:recursivethmFE} and \ref{thm:familyavgthm}, appearing in Sections \ref{section:FE} and \ref{section:FamilyAvg}, respectively, are logically independent, and can be read in either order.  
Although very different in the fine details, the two proofs have important structural similarities.  
Because of the logical independence of these two sections, and due to the strong analogies, we have deliberately chosen to `refresh' notation when passing from Section \ref{section:FE} to Section \ref{section:FamilyAvg}.  Even more, we have structured the proofs in a similar way, and chosen notation to help draw the reader's attention to analogous quantities in the two proofs.

Our main interest in Theorem \ref{thm:mainthm} is with $k=T=1$.  However, the recursive nature of the proof and the appearance of the generalized norm $\Delta'$ in Theorems \ref{thm:recursivethmFE} and \ref{thm:familyavgthm} forces us to consider more general values of $k$ and $T$.

\subsection{Applications}
The classical large sieve has a wealth of important applications, and we consider some variants for the new rational large sieve (Theorem \ref{thm:mainthm}).  
The literature in analytic number theory on sieving problems for the rational numbers is relatively sparse.
The authors of \cite{EEHK, Zywina} give versions of Gallagher's larger sieve for rationals, and deduce some impressive algebraic applications.  More applications could be of great interest.

Consider the following sieving problem.  Let $\mathcal{N} = \{n \in \mathbb{Q}_{>0} : \hit(n) \leq N \}$.  Let $\mathcal{P}$ be a finite set of prime numbers.  For each $p \in \mathcal{P}$, let $\Omega_p \subset \mathbb{Z}/p\mathbb{Z}$.  Define the sifted set
\begin{equation*}
\mathcal{S}(\mathcal{N}, \mathcal{P}, \Omega)
=
\{ n \in \mathcal{N} : 
\text{for all } p \text{ with } v_p(n) = 0, \thinspace \redu_p(n) \not \in \Omega_p  
\}.
\end{equation*}
Note that if $v_p(n) \neq 0$, then $\redu_p(n)$ may not be defined.
Let $\omega(p) = |\Omega_p|$, and suppose that $\omega(p) < p$ for all $p \in \mathcal{P}$.  
Let $h(p) = \frac{\omega(p)}{p - \omega(p)}$ for $p \in \mathcal{P}$, and $h(p) = 0$ for $p \not \in \mathcal{P}$, and extend $h$ multiplicatively on the squarefree integers.  
Define $H = \sum_{q \leq Q} h(q)$.  
\begin{myprop}
With the above notation, we have
\begin{equation*}
|\mathcal{S}(\mathcal{N}, \mathcal{P}, \Omega)|
\ll \frac{(N+Q^2)^{1+\varepsilon}}{H}.
\end{equation*}
\end{myprop}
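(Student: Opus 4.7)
This is the rational analogue of the classical derivation, due to Montgomery, of a sieve bound on a sifted set from a large sieve inequality (cf.\ \cite[Ch.~7]{IK}). The plan is to apply Theorem \ref{thm:largesieverationalversion} to the indicator sequence of $\mathcal{S}$ and combine it with a Montgomery--Selberg style lower bound on the character sums.

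Concretely, I would set $\alpha_n = \mathbf{1}_{n \in \mathcal{S}}$, so that $\sum_n |\alpha_n|^2 = Z := |\mathcal{S}|$. Theorem \ref{thm:largesieverationalversion} then yields
\begin{equation*}
\sum_{q \leq Q} \sumstar_{\chi \shortmod{q}} \Big| \sum_{\substack{n \in \mathbb{Q}_{(q)}^{\times} \\ \hit(n) \leq N}} \alpha_n \chi(n) \Big|^2 \ll (Q^2+N)^{1+\varepsilon} Z.
\end{equation*}
For the matching lower bound, I would restrict the outer sum to squarefree $q \leq Q$ whose prime factors all lie in $\mathcal{P}$; this only decreases the left side. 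The key observation is that if $n \in \mathcal{S} \cap \mathbb{Q}_{(q)}^{\times}$, then $\redu_q(n)$ lies in the subset $\Omega_q^* \subset (\mz/q\mz)^{\times}$ consisting of residues $a$ with $a \shortmod{p} \notin \Omega_p$ for every $p \mid q$; the Chinese remainder theorem gives $|\Omega_q^*| = \prod_{p \mid q} (p - \omega(p))$.

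A standard Cauchy--Schwarz argument together with character orthogonality on $(\mz/q\mz)^{\times}$ then yields
\begin{equation*}
\sumstar_{\chi \shortmod{q}} \Big| \sum_{\substack{n \in \mathbb{Q}_{(q)}^{\times} \\ \hit(n) \leq N}} \alpha_n \chi(n) \Big|^2 \geq h(q) Z^2,
\end{equation*}
with the passage from all characters mod $q$ to primitive ones handled by Möbius inversion through the multiplicative structure of $h$. This is the content of the classical Montgomery lemma (see e.g.\ \cite[Thm.~7.13, 7.14]{IK}), and its proof transfers without essential change, because it depends only on the image of $\mathcal{S}$ in the finite group $(\mz/q\mz)^{\times}$ under $\redu_q$, not on whether the points of $\mathcal{S}$ are integers or rationals. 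Summing this lower bound over admissible $q$ and comparing with the upper bound above produces $H \cdot Z^2 \ll (Q^2 + N)^{1+\varepsilon} Z$, whence $Z \ll (Q^2 + N)^{1+\varepsilon}/H$ after dividing by $H \cdot Z$.

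The only real point of care is verifying that Montgomery's lower bound transfers cleanly to the rational setting; in particular, one must keep track of the contribution of $n \in \mathcal{S}$ with $v_p(n) \neq 0$ for some $p \in \mathcal{P}$, which are counted in $Z$ but disappear from the sum over $\mathbb{Q}_{(q)}^{\times}$ whenever $p \mid q$. This is exactly what the Möbius inversion in Montgomery's argument absorbs, so the translation to rationals is essentially routine and no genuinely new obstacle arises.
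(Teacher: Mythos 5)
Your proposal is correct and is exactly the route the paper intends: the paper gives no argument beyond citing the method of \cite[Theorem 7.14]{IK} (equivalently \cite[Proposition 2.3]{Kowalski}), i.e., apply the rational large sieve of Theorem \ref{thm:largesieverationalversion} to the indicator of the sifted set and pair it with Montgomery's lower bound $\sumstar_{\chi \bmod q}|S(\chi)|^2 \geq h(q)|\mathcal{S}|^2$ for squarefree $q$ supported on $\mathcal{P}$. You also correctly isolate the only genuine point of care in transferring Montgomery's lemma to rationals, namely the elements $n$ with $v_p(n)\neq 0$ for some $p\mid q$, which drop out of the sum over $\mathbb{Q}_{(q)}^{\times}$ and must be absorbed in the standard bookkeeping.
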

One can prove this following the method of \cite[Theorem 7.14]{IK}.  Alternatively, see \cite[Proposition 2.3]{Kowalski} for a proof in much greater generality.
For a nontrivial result, one needs $H \gg N^{\varepsilon}$, which is more restrictive than in the classical arithmetic large sieve.

A standard application of the classical large sieve is to let $\Omega_p$ consist of $\frac{p-1}{2}$ residue classes chosen arbitrarily, for all $p \leq Q$.  Then $H \gg Q$, and taking $Q = \sqrt{N}$ gives that $|\mathcal{S}(\mathcal{N}, \mathcal{P}, \Omega)| \ll N^{1/2+\varepsilon}$.  

We also present a Barban-Davenport-Halberstam type theorem.
Suppose that $\alpha_n$ is a sequence supported on $\mathbb{Q}_{>0}$, with $\hit(n) \leq X$.  We assume a weak Siegel-Walfisz type condition for the sequence, as follows.  Define
\begin{equation*}
 S(X, \chi) = \sum_{\hit(n) \leq X} \alpha_n \chi(n).
\end{equation*}
For $\chi = \chi' \chi_0$ with $\chi'$ of conductor $r > 1$, and $\chi_0$ trivial modulo $s$, we assume
\begin{equation}
\label{eq:SWcondition}
 |S(X, \chi)| \ll_{B, k} |\alpha| \frac{X^{1/2} \tau_k(s)}{(\log X)^{B}}, 
\end{equation}
for some $k$-fold divisor function $\tau_k$, and all $r \leq (\log X)^B$.

\begin{myprop}
\label{prop:BDH}
 Suppose that $\alpha$ satisfies the S-W condition \eqref{eq:SWcondition}, for any $B > 0$.  Then
 \begin{equation*}
  \sum_{q \leq Q} \thinspace \sumstar_{a \shortmod{q}}
  \Big| \sum_{\substack{\hit(n) \leq X \\ n \equiv a \shortmod{q}}} \alpha_n
  - \frac{1}{\varphi(q)} \sum_{\substack{\hit(n) \leq X \\ (n,q) = 1}} \alpha_n
  \Big|^2 \ll \frac{X |\alpha|^2}{(\log X)^A},
 \end{equation*}
 for any $A > 0$,
provided $Q \ll X^{1-\varepsilon}$.  
\end{myprop}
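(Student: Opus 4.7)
The plan is to follow the classical Barban--Davenport--Halberstam strategy, adapted to the rational setting, combining the hypothesis \eqref{eq:SWcondition} for small conductors with Theorem \ref{thm:largesieverationalversion} for large conductors. The starting move is orthogonality of Dirichlet characters modulo $q$: for $(a,q)=1$ the congruence $n\equiv a\pmod q$ forces $n\in\mathbb{Q}_{(q)}^{\times}$, so the inner difference equals $\varphi(q)^{-1}\sum_{\chi\neq\chi_0\shortmod q}\overline{\chi}(a)S(X,\chi)$. Parseval over reduced residues $a$ modulo $q$ converts the left-hand side of the proposition into
\[
\sum_{q\leq Q}\frac{1}{\varphi(q)}\sum_{\substack{\chi\shortmod q\\\chi\neq\chi_0}}|S(X,\chi)|^2.
\]

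Next, I would regroup characters by their primitive inducing data. Every non-principal $\chi\shortmod q$ is induced from a unique primitive $\chi'$ of conductor $r\mid q$, $r>1$, with $\chi(n)=\chi'(n)\mathbf{1}_{(n,s)=1}$, $s=q/r$. Möbius inversion on $(n,s)=1$ (interpreted as coprimality of $s$ with the numerator and denominator of $n$ in lowest terms) yields
\[
S(X,\chi)=\sum_{d\mid s}\mu(d)\,T(X,\chi',d),\qquad T(X,\chi',d):=\sum_{\substack{\hit(n)\leq X\\ n\in\mathbb{Q}_{(r)}^{\times},\, d\mid n}}\alpha_n\chi'(n).
\]
The standard estimate $\sum_{q\leq Q,\,r\mid q}\varphi(q)^{-1}\ll(\log Q)/r$ together with $\tau(s)$-weights from Möbius then reduces the problem, up to logarithmic losses, to bounding $\sum_{1<r\leq Q}r^{-1}\sumstar_{\chi'\shortmod r}|S(X,\chi')|^2$ (and analogous sums with $T$ in place of $S$).

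The final step is to split at $R_0:=(\log X)^{B}$. For $r\leq R_0$ I would apply \eqref{eq:SWcondition} to each $T(X,\chi',d)$, which gives $|\alpha|X^{1/2}\tau_k(sd)(\log X)^{-B}$; summing the square over $r,s,d$ produces a total contribution $|\alpha|^2X(\log X)^{-2B+O(1)}$. For $r>R_0$, I would apply Theorem \ref{thm:largesieverationalversion} dyadically on $R/2<r\leq R$ to obtain $(R^2+X)^{1+\varepsilon}|\alpha|^2$ per dyadic block, then weight by $R^{-1}(\log X)^{O(1)}$. Summing over $R_0<R\leq Q$ and invoking $Q\ll X^{1-\varepsilon}$ yields $|\alpha|^2X(\log X)^{-B+O(1)}$, and choosing $B=B(A)$ sufficiently large gives the claim.

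The main obstacle will be the bookkeeping in the second step: one must verify that the Möbius inversion genuinely respects the rational structure (the restriction $n\in\mathbb{Q}_{(r)}^{\times}$ ensures $\chi'(n)$ is defined, and the condition $(n,s)=1$ and auxiliary divisibility $d\mid n$ have to be read in the sense of numerator/denominator in lowest terms), and that the divisor weights $\tau_k(s)$ thereby forced on the sums $T(X,\chi',d)$ are exactly those admitted by the $\tau_k$-aspect of \eqref{eq:SWcondition} — which is why the hypothesis is formulated with a $k$-fold divisor factor rather than a plain $X^{1/2}(\log X)^{-B}$.
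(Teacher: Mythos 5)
Your overall architecture (orthogonality, Parseval, split at conductor $(\log X)^{B}$, Siegel--Walfisz for small conductors, the rational large sieve for large ones) is the right one and matches the paper's. But there is a genuine gap in the large-conductor range. For a dyadic block $R/2<r\leq R$ you invoke Theorem \ref{thm:largesieverationalversion}, which gives $(R^{2}+X)^{1+\varepsilon}|\alpha|^{2}$, and then claim that weighting by $R^{-1}$ and summing over $R_0<R\leq Q$ yields $X(\log X)^{-B+O(1)}|\alpha|^{2}$. This is false: for $R$ just above $R_0=(\log X)^{B}$ the block contributes $R^{-1}X^{1+\varepsilon}|\alpha|^{2}\asymp X^{1+\varepsilon}(\log X)^{-B}|\alpha|^{2}$, and the factor $X^{\varepsilon}$ dominates every power of $\log X$, so no bound of the form $X(\log X)^{-A}$ follows. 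This is the classical pitfall in Barban--Davenport--Halberstam arguments: one needs an $\varepsilon$-free large sieve in the intermediate range. The paper handles it by using the elementary bound $\Delta(R,X)\ll R^{4}+X\log X$ (a consequence of \eqref{eq:DeltaTrivialBound}) for $(\log X)^{B}<R\leq(XQ)^{1/10}$, where $R^{-1}(R^{4}+X\log X)\ll(XQ)^{3/10}+X(\log X)^{1-B}$ is acceptable, and reserves Theorem \ref{thm:mainthm} for $R>(XQ)^{1/10}$, where $R^{-1}(R^{2}+X)^{1+\varepsilon}\ll Q^{1+\varepsilon}+X^{1+\varepsilon}(XQ)^{-1/10}$ is harmless given $Q\ll X^{1-\varepsilon}$. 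Without this three-way split your argument does not close.

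A secondary, fixable issue is the M\"obius inversion in your second step. The hypothesis \eqref{eq:SWcondition} is stated for the \emph{imprimitive} sums $S(X,\chi'\chi_0)$ themselves, with the $\tau_k(s)$ factor built in precisely to absorb the loss from imprimitivity; the paper applies it directly and never inverts. Your sums $T(X,\chi',d)$, carrying the extra condition $d\mid n$, are not character sums of the form $S(X,\psi)$, so \eqref{eq:SWcondition} does not apply to them as you assert, and you would have to reprove a Siegel--Walfisz input for them. The cleaner route is to skip the inversion entirely: for small conductors quote \eqref{eq:SWcondition} verbatim, and for large conductors note that $S(X,\chi'\chi_0)$ is a primitive-character sum in the modified coefficients $\alpha_n\mathbf{1}_{n\in\mathbb{Q}_{(q_0)}^{\times}}$, to which the large sieve applies with no loss in the $\ell^{2}$ norm.
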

We prove Proposition \ref{prop:BDH} in Section \ref{section:BDH}.

\subsection{Proof sketches}
Here we present some overly-simplified outlines of the proofs.  In this section we freely drop factors of size $(Q^2 k T N)^{\varepsilon}$, as if they were $1$.
\subsubsection{Theorem \ref{thm:recursivethmFE}}
\label{section:recursivethmFEsketch}
For simplicity, we omit the $t$-aspect, and write $\Delta(Q,k,N)$ for the norm.
For a bump function $w$ supported on $[1/2, 2]$, 
consider
\begin{equation*}
 S = \sum_{(a,b) = 1} w\Big(\frac{ab}{N}\Big) | T(a,b) |^2,
 \quad 
 \text{where}
 \quad
 T(a,b) = \sum_{q,\chi, \theta} \beta_{\chi, \theta} \chi \theta(a \overline{b}).
\end{equation*}
The condition $(a,b) = 1$ is necessary but difficult to use.  
In comparison to the quadratic large sieve, this condition is analogous to the restriction to fundamental discriminants.  Inspired by this similarity, and following \cite{HB}, let $1 \leq Y <N/10$ to be chosen later, and note $S \leq S_{>Y}$ where
\begin{equation*}
 S_{>Y} = \sum_{\frac{ab}{(a,b)^2} > Y} w\Big(\frac{ab}{N}\Big) |T(a,b)|^2.
\end{equation*}
We then write $S_{>Y} = S_{\infty} - S_{\leq Y}$, where $S_{\leq Y}$ has $\frac{ab}{(a,b)^2} \leq Y$, and $S_{\infty}$ has $a$ and $b$ unconstrained.  These two sums are treated in completely different ways.  For $S_{\leq Y}$, let $g = (a,b)$ and change variables $a \rightarrow ga$ and $b \rightarrow gb$.  Ignoring coprimality issues, then $T(ga,gb) \approx T(a,b)$, and so
\begin{equation*}
 S_{\leq Y} \approx \sum_{\substack{ab \leq Y \\ (a,b) = 1}} \sum_g w\Big(\frac{g^2 ab}{N}\Big)
 |T(a,b)|^2
 =  \int_{(2)} \widetilde{w}(s) \zeta(2s) \sum_{\substack{ab \leq Y \\ (a,b) = 1}} 
 \Big(\frac{N}{ab}\Big)^s
 |T(a,b)|^2 \frac{ds}{2 \pi i}.
\end{equation*}
Next shift contours to the line $\varepsilon$, passing a pole at $s=1/2$.  The contribution to $S_{\leq Y}$ from the new contour is essentially $\ll N^{\varepsilon} \Delta(Q,k, Y) |\beta|^2$.  The pole at $s=1/2$ gives
\begin{equation}
\label{eq:sketchpolartermFE}
 \tfrac12 \widetilde{w}(1/2) 
 \sum_{\substack{ab \leq Y \\ (a,b) = 1}} \Big(\frac{N}{ab}\Big)^{1/2} |T(a,b)|^2.
\end{equation}
This term \eqref{eq:sketchpolartermFE} is \emph{not} satisfactorily bounded on its own.  Indeed, even if we accept Theorem \ref{thm:mainthm}, then by breaking up into dyadic segments $M/2 < ab \leq M$, with $1 \leq M \leq Y$, we can at best bound \eqref{eq:sketchpolartermFE} by
\begin{equation*}
 \max_{1 \leq M \leq Y} \Big(\frac{N}{M}\Big)^{1/2} (Q^2k + M)^{} |\beta|^2
\ll (Q^2 k \sqrt{N} + N^{1/2} Y^{1/2} )|\beta|^2
.
\end{equation*}
The former term of size $Q^2 k \sqrt{N}$ is the culprit, and matches with \eqref{eq:DeltaTrivialBound}.  Luckily, and crucially, the term \eqref{eq:sketchpolartermFE} will partially cancel with another term from $S_{\infty}$.  This cancellation property also appeared in \cite{HB}.

Next consider $S_{\infty}$.  Opening $|T(a,b)|$ and applying the Mellin inversion formula gives
\begin{equation*}
 S_{\infty} = 
  \sum_{q_1, q_2, \chi_1, \chi_2, \theta_1, \theta_2} \beta_1 \overline{\beta_2}
 \int_{(2)} \widetilde{w}(s) N^s 
 L(s , \Phi) L(s , \overline{\Phi}) \frac{ds}{2 \pi i},
\end{equation*}
where $\Phi = \chi_1 \overline{\chi_2} \theta_1 \overline{\theta_2}$.  Unfortunately, $\Phi$ may not be primitive, and this complicates the application of the functional equation.  For this sketch, we consider the two extremes, where either $\Phi$ is primitive of conductor $q_1 q_2 k$, or where $\Phi$ is trivial.  The trivial case is easy to control, since this means $\chi_1 = \chi_2$ (whence $q_1 = q_2$) and $\theta_1 = \theta_2$.  This gives rise to a diagonal term of acceptable size $O(N |\beta|^2)$.  For the primitive characters, we shift contours to the line $-1$, change variables $s \rightarrow 1-s$, and apply the functional equation.  This gives (roughly)
\begin{equation*}
  \sum_{q_1, q_2, \chi_1, \chi_2, \theta_1, \theta_2} \beta_1 \overline{\beta_2}
 \int_{(2)} \widetilde{w}(1-s)
 \frac{(q_1 q_2 k)^{2s-1}}{N^{s-1}} \frac{\gamma(s)}{\gamma(1-s)}
 L(s, \Phi) L(s, \overline{\Phi}) \frac{ds}{2 \pi i},
\end{equation*}
where $\gamma(s)$ is the product of gamma factors in the completed $L$-function of $L(s,\Phi) L(s, \overline{\Phi})$.  
Next re-open the Dirichlet series and rearrange, giving
\begin{equation*}
 \sum_{a,b}
 \int_{(2)} \widetilde{w}(1-s) \frac{\gamma(s)}{\gamma(1-s)}
 \sum_{q_1, q_2, \chi_1, \chi_2, \theta_1, \theta_2} \beta_1 \overline{\beta_2}
 \frac{(q_1 q_2 k)^{2s-1}}{(ab)^s N^{s-1}} 
\chi_1 \overline{\chi_2} \theta_1 \overline{\theta_2} (a \overline{b})
 \frac{ds}{2 \pi i}.
\end{equation*}
Letting $g = (a,b)$, replacing $a$ by $ga$ and $b$ by $gb$, and summing over $g$, we obtain
\begin{equation*}
 \sum_{\substack{ab \leq \frac{Q^4 k^2}{N} \\ (a,b) = 1}}
 \int_{(2)} \widetilde{w}(1-s) \frac{\gamma(s)}{\gamma(1-s)} \zeta(2s)
 \sum_{q_1, q_2, \chi_1, \chi_2, \theta_1, \theta_2} \beta_1 \overline{\beta_2}
 \frac{(q_1 q_2 k)^{2s-1}}{(ab)^s N^{s-1}} 
\chi_1 \overline{\chi_2} \theta_1 \overline{\theta_2} (a \overline{b})
 \frac{ds}{2 \pi i},
\end{equation*}
as the sum can be truncated at $ab \leq \frac{Q^4 k^2}{N}$ (by shifting the contour far to the right).
Next we shift contours back to the line $\varepsilon$, crossing a pole at $s=1/2$.  This polar term has a nice simplification, and takes the same form as \eqref{eq:sketchpolartermFE}, but with $ab$ truncated at $\frac{Q^4 k^2}{N}$ instead of $Y$.  Taking $Y = \frac{Q^4 k^2}{N}$ then causes these two polar terms to cancel!  The contribution on the line $\varepsilon$ essentially becomes bounded by
 $\frac{N}{Q^2 k} \Delta(Q, k,  \frac{Q^4 k^2}{N})$,
in line with Theorem \ref{thm:recursivethmFE}.

\subsubsection{Theorem \ref{thm:familyavgthm}}
For simplicity take $k=1$ and omit $t$, and write $\Delta(Q,N)$ for the norm.  For a bump function $w$, let
\begin{equation*}
 S =  \sum_q w(q/Q) \sumstar_{\chi \shortmod{q}} |T(\chi)|^2,
 \qquad
 T(\chi) = 
 \sum_{\substack{N/2 < ab \leq N \\ (a,b) = 1}} \alpha_{a,b} \chi(a \overline{b}).
\end{equation*}
The condition that $\chi$ is primitive is necessary but difficult to use.  In analogy with the proof of Theorem \ref{thm:recursivethmFE}, let $Y < Q/10$, and define
\begin{equation*}
 S_{>Y} =  \sum_{q} w(q/Q) \sumstar_{\substack{\chi \shortmod{q} \\ \cond(\chi) > Y}} |T(\chi)|^2.
\end{equation*}
Then $S \leq S_{>Y}$, by positivity.  Again, write $S = S_{\infty} - S_{\leq Y}$ where 
 $S_{\leq Y}$ has characters modulo $q$ with $\cond(\chi) \leq Y$ and
$S_{\infty}$ has $\chi$ ranging over all characters of modulus $q$.  

For $S_{\leq Y}$, replace $q$ by $q q_0$ and $\chi$ by $\chi \chi_0$ where (the new) $\chi$ has conductor $q$, and $\chi_0$ is trivial.  Ignoring coprimality, we have $T(\chi \chi_0, t) \approx T(\chi, t)$.  Applying Mellin inversion, and summing over $q_0$ to form a zeta function, we obtain
\begin{equation*}
 S_{\leq Y} \approx 
 \sum_{q \leq Y} 
\int_{(2)} \widetilde{w}(s) \Big(\frac{Q}{q}\Big)^s \zeta(s) 
 \sumstar_{\substack{\chi \shortmod{q}}} |T(\chi)|^2
 \frac{ds}{2 \pi i}.
\end{equation*}
We shift contours to the line $\varepsilon$, passing a pole at $s=1$ only.  This polar term takes the form
\begin{equation}
\label{eq:sketchpolartermFamilyAvg}
 Q \widetilde{w}(1)  \sum_{q \leq Y} 
 q^{-1}
 \sumstar_{\substack{\chi \shortmod{q}}} |T(\chi)|^2.
\end{equation}
On the new line $\varepsilon$, we essentially obtain an expression of size $\Delta(Y,  N) |\beta|^2$.
This polar term is the analog of \eqref{eq:sketchpolartermFE}, and as before, it is not satisfactorily bounded on its own.  Indeed, Theorem \ref{thm:mainthm} would imply that at best it is bounded by 
$$
Q \max_{R \leq Y} R^{-1} (R^2 + N) |\alpha|^2 = (QY + QN) | \alpha|^2. 
$$
Here the term $QN$ is the culprit, and as before, we will cancel this polar term with one arising within $S_{\infty}$.

Now consider $S_{\infty}$.  Opening the square and applying orthogonality of characters gives
\begin{equation*}
S_{\infty} \approx Q \sum_{q} w_1(q/Q) \sum_{\substack{(a_1, b_1) = (a_2, b_2) = 1 \\ a_1 b_2 \equiv a_2 b_1 \shortmod{q}}}
\alpha_{a_1, b_1} \overline{\alpha_{a_2, b_2}},
\end{equation*}
where $w_1(x) = x w(x)$.  The range of possible values of $\gcd(a_1 b_2, a_2 b_1)$ causes some arithmetical difficulties.  For this sketch, we consider the two extreme cases, where either they are coprime, or where $a_1 b_2 = a_2 b_1$, which we call the diagonal case.  Since $(a_1, b_1) = (a_2, b_2) = 1$, the diagonal reduces to $a_1 = a_2$ and $b_1 = b_2$, giving a term of size $O(Q^2  |\alpha|^2)$, which is acceptable.

We now focus on the case $(a_1 b_2, a_2 b_1) = 1$.  Write $a_1 b_2 = a_2 b_1 + qr$, which we now interpret as $a_1 b_2 \equiv a_2 b_1 \pmod{r}$, with $q = \frac{a_1 b_2 - a_2 b_1}{r}$.  
Note typically $r \ll N/Q$, so this reduces the modulus when $Q^2 \gg N$.
This leads to
\begin{equation*}
S_{\infty} \approx Q \sum_{r }  \sum_{\substack{(a_1, b_1) =  (a_2, b_2) = 1 \\ a_1 b_2 \equiv a_2 b_1 \shortmod{r}}}
w_1\Big(\frac{a_1 b_2 - a_2 b_1}{Qr}\Big)
\alpha_{a_1, b_1} \overline{\alpha_{a_2, b_2}}.
\end{equation*}
Next we detect the congruence with
characters modulo $r$, as in \cite{ConreyIwaniecSound}, giving
\begin{equation*}
S_{\infty} \approx Q \sum_{r  } \sum_{\chi \shortmod{r}} r^{-1}  \sum_{\substack{(a_1, b_1) =   (a_2, b_2) = 1 }}
w_1\Big(\frac{a_1 b_2 - a_2 b_1}{Qr}\Big)
\alpha_{a_1, b_1} \overline{\alpha_{a_2, b_2}}
\chi(a_1 b_2 \overline{a_2 b_1})
.
\end{equation*}
Since the characters are not primitive, replace $\chi$ by $\chi \chi_0$ and $r$ by $r r_0$ where the new $\chi$ has conductor $r$, and $\chi_0$ is trivial modulo $r_0$.  Applying Mellin inversion, and evaluating the $r_0$-sum in terms of a zeta function, we obtain that $S_{\infty}$ is roughly
\begin{equation*}
Q \int_{(1)} \widetilde{w_1}(-s)
\sum_{r \leq N/Q} \thinspace \sumstar_{\chi \shortmod{r}} r^{-1}  \sum_{\substack{(a_1, b_1) = 1 \\ (a_2, b_2) = 1 }}
\Big(\frac{a_1 b_2 - a_2 b_1}{Qr}\Big)^s \zeta(s+1)
\alpha_{a_1, b_1} \overline{\alpha_{a_2, b_2}}
\chi(a_1 b_2 \overline{a_2 b_1})
\frac{ds}{2 \pi i}
.
\end{equation*}
Next we shift contours to the line $-1+\varepsilon$, passing a pole at $s=0$ only.  Note that $\widetilde{w_1}(0) = \widetilde{w}(1)$.    This polar term nicely simplifies, and takes the same form as \eqref{eq:sketchpolartermFamilyAvg}, but with $r$ truncated at $N/Q$ instead of $Y$.  Taking $Y = N/Q$ causes the two polar terms to cancel.  
Next consider the integral along the line $-1+\varepsilon$.  
The variables $a_i, b_i$ are not separated, but one might hope that this is only a technical issue solvable with integral transform techniques (indeed, see Lemma \ref{lemma:separationofvariablesDeterminant}).  We might then expect that the contribution from the new line of integration to be bounded by $\frac{Q^2}{N} \Delta(N/Q,  N) |\alpha|^2$, which is consistent with Theorem \ref{thm:familyavgthm}.

The wealth of extra parameters in the definition of $\Delta'$ in \eqref{eq:Delta''def} are there to account for the overlooked conditions (both arithmetical and archimedean).

\subsubsection{Reflections}
The similarities between the proofs are remarkable, even if the fine details are different.  We also observe that the divisor-switching method used in the proof of Theorem \ref{thm:familyavgthm} is analogous to the functional equation of the Dirichlet $L$-functions used for Theorem \ref{thm:recursivethmFE}.  At the cost of some exaggeration, one might call the divisor switch itself a functional equation.  In support of this, consider the family of functions $\tau_{s}(n) = \sum_{ab=n} (a/b)^s$, which does indeed satisfy the functional equation $\tau_{-s}(n) = \tau_{s}(n)$, by the divisor switch.  Moreover, these coefficients $\tau_{s}(n)$ appear as Fourier coefficients of the level $1$ Eisenstein series, and the functional equation of the Eisenstein series is entwined with the functional equation of its Fourier coefficients.

\subsubsection{Theorem \ref{thm:mainthm}}
Theorem \ref{thm:mainthm} is deduced from Theorems \ref{thm:recursivethmFE} and \ref{thm:familyavgthm} in Section \ref{section:mainthmdeduction}.  The proof uses that the norm $\Delta$ is monotonic, and applies the two self-referential theorems in a recursive manner.  
In retrospect, some of these ideas have similarities to elements used in \cite{BombieriIwaniecZeta, BombieriIwaniecExponentialSums}.

\subsection{Notation and conventions}
Let $\Gamma_{\mr}(s) = \pi^{-s/2} \Gamma(s/2)$.  If $\chi$ is a Dirichlet character and $a/b \in \mathbb{Q}$ in lowest terms, we may interchangeably write
\begin{equation}
 \chi(a) \overline{\chi}(b) = \chi(a \overline{b}) = \chi(a/b).
\end{equation}
We
use the notation $A \lesssim B$ as a synonym for
\begin{equation}
\label{eq:AlesssimBdef}
 A \leq C(\varepsilon) (Q^2 k TN)^{\varepsilon} B.
\end{equation}

\subsection{Acknowledgments}
I thank Henryk Iwaniec and Emmanuel Kowalski for valuable comments.
I am also grateful to the referee for a careful reading which uncovered several inaccuracies.

\section{Deduction of Theorem \ref{thm:mainthm}}
\label{section:mainthmdeduction}
In this section, we use Theorems \ref{thm:recursivethmFE} and \ref{thm:familyavgthm} to prove Theorem \ref{thm:mainthm}.

\subsection{Monotonicity}
As in the quadratic large sieve \cite{HB}, it is vital that the norm $\Delta(Q, k, T, N)$ is essentially monotonic in the $N$- and $Q$-components.
The proofs differ a bit depending on the case, but the overall theme is similar, and based on an idea of Forti and Viola \cite{FortiViola}.

\begin{mylemma}
\label{lemma:mononicityNaspect}
 Suppose $P \gg \log{QN}$ with a large (but absolute) implied constant.  Then there exists a prime $p \in [P, 2P]$ so that
 \begin{equation*}
  \Delta(Q,k,T, N) \leq 8 \Delta(Q,k,T, Np).
 \end{equation*}
\end{mylemma}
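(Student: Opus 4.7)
Plan: I follow the Forti--Viola pattern, pushing a maximizer for $\Delta(Q,k,T,N)$ to the longer range via multiplication by a prime.

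Setup. Let $\alpha=(\alpha_{a,b})$ with $|\alpha|=1$ achieve $\Delta(Q,k,T,N)$, and for each prime $p\in[P,2P]$ define the test vector $\gamma^{(p)}$ supported on $Np/2<a'b'\le Np$, $(a',b')=1$, by $\gamma^{(p)}_{pa,b}=\alpha_{a,b}$. The relation $(pa,b)=1$ is equivalent to $(a,b)=1$ and $(p,b)=1$, so $\gamma^{(p)}$ is admissible with $|\gamma^{(p)}|^2\le 1$. The key multiplicativity $\lambda_{\chi\theta,t}(pa,b)=(\chi\theta)(p)\,p^{it}\lambda_{\chi\theta,t}(a,b)$ and the definition of $\Delta(Q,k,T,Np)$ give
\begin{equation*}
\Delta(Q,k,T,Np)\ \ge\ \int\sum_{q,\chi,\theta}|S(\gamma^{(p)})|^2\,dt\ =\ \int\sum_{(p,qk)=1}|S_p(\alpha)|^2\,dt,
\end{equation*}
where $S_p(\alpha):=\sum_{(a,b)=1,\,(p,b)=1}\alpha_{a,b}\lambda_{\chi\theta,t}(a,b)$. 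It therefore suffices to produce a single prime $p$ for which the right-hand side is $\ge\Delta(Q,k,T,N)/8$.

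Splitting. Decompose $S(\alpha)=S_p(\alpha)+S^p(\alpha)$, with $S^p$ the tail over $(a,b)=1$ with $p\mid b$. The elementary bound $|S_p|^2\ge\tfrac12|S|^2-|S^p|^2$ yields
\begin{equation*}
\int\sum_{(p,qk)=1}|S_p|^2\,dt\ \ge\ \tfrac12\Delta(Q,k,T,N)-\tfrac12E_1(p)-E_2(p),
\end{equation*}
where $E_1(p):=\int\sum_{q,\chi,\theta:\,p\mid qk}|S(\alpha)|^2\,dt$ and $E_2(p):=\int\sum_{q,\chi,\theta}|S^p(\alpha)|^2\,dt$.

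Averaging. By the prime number theorem there are $J\asymp P/\log P$ primes in $[P,2P]$, and every integer $m\le QkN$ has at most $\log m/\log P$ prime divisors in that range. Double-counting gives $\sum_p E_1(p)\ll\Delta(Q,k,T,N)\log(Qk)/\log P$; and since $\alpha_{a,b}\mathbf{1}[p\mid b]$ is itself a valid test vector, $E_2(p)\le\Delta(Q,k,T,N)\,\|\alpha\mathbf{1}[p\mid b]\|^2$, which on summation over $p$ yields $\sum_p E_2(p)\ll\Delta(Q,k,T,N)\log N/\log P$. Hence the average of $\tfrac12 E_1+E_2$ over $p\in[P,2P]$ is $O\!\big(\Delta(Q,k,T,N)\log(QkN)/P\big)$, which falls below $3\Delta(Q,k,T,N)/16$ once the implied constant in $P\gg\log QN$ (more precisely $\log(QkN)$) is taken sufficiently large. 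Markov's inequality then selects a single prime $p$ with $\tfrac12E_1(p)+E_2(p)\le 3\Delta(Q,k,T,N)/8$, and combining with the splitting inequality gives $\Delta(Q,k,T,N)\le 8\,\Delta(Q,k,T,Np)$.

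The main obstacle is isolating one prime $p$ that simultaneously controls both error terms: the ``modulus-side'' loss $E_1$ coming from $\chi(p)=0$ when $p\mid qk$, and the ``argument-side'' loss $E_2$ from the $\ell^2$-mass of $\alpha$ concentrated on $b$ divisible by $p$. These two obstructions are logically independent, which is precisely why one needs PNT-many primes in $[P,2P]$—hence the hypothesis that $P$ exceeds a large absolute multiple of $\log QN$.
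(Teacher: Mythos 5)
Your proof is correct and follows essentially the same Forti--Viola averaging argument as the paper: average over the $\asymp P/\log P$ primes in $[P,2P]$, discard on average the losses from $p$ dividing the modulus and from the $\ell^2$-mass of $\alpha$ on arguments divisible by $p$, then shift $a\mapsto pa$ into the longer range. The only (harmless) differences are that you work with a fixed maximizer rather than an arbitrary test vector, restrict only to $p\nmid b$ rather than $p\nmid ab$, and correctly note in passing that the hypothesis should read $P\gg\log(QkN)$ to also absorb the primes dividing $k$.
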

\begin{proof}
Since $k$ and $T$ are frozen, we suppress them from the discussion, writing $\Delta(Q,N)$ in place of $\Delta(Q,k,T,N)$.  
Let $\gamma_{a,b}$ be complex numbers supported on $N/2 < ab \leq N$, and $(a,b) = 1$.  Let $P \geq 1$ be a parameter to be chosen, and let 
$P^*$ denote the number of primes $p \in [P, 2P]$.  The prime number theorem implies $P^* \sim \frac{P}{\log{P}}$.
Now we have
\begin{align*}
\sum_{q, \chi} 
\Big|
\sum_{\substack{(a,b) = 1 \\ N/2 < ab \leq N}} \gamma_{a,b} \chi(a) \overline{\chi}(b) \Big|^2
=
\sum_{q, \chi} \frac{1}{P^*} \sum_{P \leq p \leq 2P}
\Big|
\sum_{\substack{(a,b) = 1 \\ N/2 < ab \leq N}} \gamma_{a,b} \chi(a) \overline{\chi}(b) \Big|^2
\\
=
\sum_{q, \chi} \frac{1}{P^*} 
\Big(\sum_{\substack{P \leq p \leq 2P \\ p \nmid q}}
+
\sum_{\substack{P \leq p \leq 2P \\ p | q}}
\Big)
\Big|
\sum_{\substack{(a,b) = 1 \\ N/2 < ab \leq N}} \gamma_{a,b} \chi(a) \overline{\chi}(b) \Big|^2.
\end{align*}
For the terms with $p|q$, we simply use $\frac{1}{P^*} \sum_{\substack{P \leq p \leq 2P \\ p | q}} 1 \leq \frac{\log{Q}}{P^* \log{P}}$.  Taking $P \gg \log{Q}$ large enough so that $P^* \log{P} \geq 2 \log{Q}$, and rearranging, we obtain
\begin{equation*}
 \Delta(Q,N) \leq 
\max_{\gamma \neq 0} 
 \frac{2}{|\gamma|^2} 
 \sum_{q, \chi} \frac{1}{P^*} 
\sum_{\substack{P \leq p \leq 2P \\ p \nmid q}}
\Big|
\sum_{\substack{(a,b) = 1 \\ N/2 < ab \leq N}} \gamma_{a,b} \chi(a) \overline{\chi}(b) \Big|^2.
\end{equation*}
Next we separate the values of $a$ and $b$ to make two sub-sums corresponding to $(p,ab) = 1$ and $p|ab$.  This gives
\begin{equation*}
\Delta(Q,N) \leq  
\max_{\gamma \neq 0} 
 \frac{4}{|\gamma|^2} 
 \sum_{q, \chi} \frac{1}{P^*} 
\sum_{\substack{P \leq p \leq 2P \\ p \nmid q}}
\Big(
\Big|
\sum_{\substack{(ab,p) = 1}} \Big|^2
+
\Big|\sum_{\substack{ p|ab}} \Big|^2
\Big).
\end{equation*}
We bound the terms with $p|ab$ similarly to the treatment of $p|q$, giving
\begin{multline*}
\max_{\gamma \neq 0} 
 \frac{4}{|\gamma|^2} 
 \sum_{q, \chi} \frac{1}{P^*} 
\sum_{\substack{P \leq p \leq 2P \\ p \nmid q}} 
\Big|\sum_{p|ab} \Big|^2 
\leq 
\max_{\gamma \neq 0}
\frac{4}{|\gamma|^2 P^*} 
\sum_{\substack{P \leq p \leq 2P}} 
\Delta(Q,N) \sum_{p|ab} |\gamma_{a,b}|^2
\\
\leq \frac{4 \log{N}}{P^* \log P} \Delta(Q,N).
\end{multline*}
We choose $P \gg \log{N}$ large enough so that $\frac{4 \log{N}}{P^* \log{P}} \leq \frac12$, whence
\begin{equation*}
\Delta(Q,N) \leq 
\max_{\gamma \neq 0} 
 \frac{8}{|\gamma|^2} 
 \sum_{q, \chi} \frac{1}{P^*} 
\sum_{\substack{P \leq p \leq 2P \\ p \nmid q}}
\Big|
\sum_{\substack{(a,b) = 1 \\ (ab,p) = 1}} \gamma_{a,b} \chi(a \overline{b}) \Big|^2.
\end{equation*}

Now we freely multiply by $|\chi(p)|^2$, which has absolute value $1$ since $p \nmid q$.
In addition, we change variables $A = ap$, let $\delta_{A,b} = \gamma_{A/p, b}$, make note that $Np/2 < Ab \leq Np$, $| \delta | = | \gamma|$, and $(A,b) = 1$.  Thus
\begin{equation*}
\Delta(Q,N) \leq \frac{8}{P^*} 
\sum_{\substack{P \leq p \leq 2P }}
\Delta(Q, Np) 
\leq 8 \max_{P \leq p \leq 2P} \Delta(Q, Np). \qedhere
\end{equation*}
\end{proof}

\begin{mylemma}
\label{lemma:mononicityQaspect}
 Suppose $P \gg \log{NQ}$ with a large (but absolute) implied constant.  Then there exists a prime $p \in [P, 2P]$ so that
 \begin{equation*}
  \Delta(Q,k,T, N) \leq 8 \Delta(Qp,k,T, N).
 \end{equation*}
\end{mylemma}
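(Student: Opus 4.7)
The plan is to mirror the proof of Lemma \ref{lemma:mononicityNaspect} above, but with one new ingredient: to enlarge the modulus of the Dirichlet character from $q$ to $qp$, we will twist each primitive character $\chi$ modulo $q$ by a \emph{fixed} primitive character $\chi_p$ modulo $p$. Since $p$ is prime and coprime to $q$, the product $\chi\chi_p$ is primitive modulo $qp$, and the map $\chi\mapsto\chi\chi_p$ is injective into the set of primitive characters modulo $qp$; by positivity this lets us dominate the $\chi$-sum at level $q$ by the full starred sum at level $qp$.

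In detail, I would start from
\[
\Delta(Q,k,T,N) \;=\; \frac{1}{P^*}\sum_{P\le p\le 2P}\Big(\, \sum_{q,\chi,\theta}\,\big|\cdots\big|^2\Big),
\]
with $P^*\sim P/\log P$ the number of primes in $[P,2P]$, and successively discard the ``bad'' parts of the prime average exactly as in Lemma \ref{lemma:mononicityNaspect}: the contributions from $p\mid q$, from $p\mid k$, and (after separating the inner $(a,b)$-sum into $(ab,p)=1$ and $p\mid ab$) from $p\mid ab$. Each of these is at most $O(\log(QkN)/(P^*\log P))$ times the full norm, so choosing $P\gg\log(QkN)$ with a sufficiently large absolute constant lets each be absorbed at the cost of a bounded multiplicative factor; the same bookkeeping as in Lemma \ref{lemma:mononicityNaspect} produces the overall constant $8$. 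On the remaining main term, with $p\nmid qk$ and $(ab,p)=1$, I fix any primitive character $\chi_p$ modulo $p$ (one exists since $\varphi(p)-1\ge 1$ for $p\ge 3$) and use the identity
\[
\lambda_{\chi\theta,t}(a,b) \;=\; \lambda_{\chi\chi_p\theta,t}(a,b)\cdot \overline{\chi_p}(a)\chi_p(b),
\]
valid whenever $(ab,p)=1$. I absorb the unimodular factor $\overline{\chi_p}(a)\chi_p(b)$ into the coefficient, setting $\delta_{a,b} = \alpha_{a,b}\,\overline{\chi_p}(a)\chi_p(b)\mathbf{1}_{(ab,p)=1}$, so that $|\delta|\le|\alpha|$.

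Because $\chi\chi_p$ is primitive modulo $qp$ with $qp\in(Qp/2,Qp]$ and $(qp,k)=1$, and distinct $\chi$ give distinct $\chi\chi_p$, positivity yields
\[
\sum_{\substack{Q/2<q\le Q\\ (q,k)=1,\,p\nmid q}}\sideset{}{^*}\sum_{\chi\shortmod q}\big|\cdots\big|^2 \;\le\; \sum_{\substack{Qp/2<q'\le Qp\\ (q',k)=1}}\sideset{}{^*}\sum_{\psi\shortmod{q'}}\Big|\sum_{(a,b)=1}\delta_{a,b}\,\lambda_{\psi\theta,t}(a,b)\Big|^2,
\]
and the right-hand side, after integrating in $t$ and summing over $\theta$, is at most $\Delta(Qp,k,T,N)\,|\delta|^2\le\Delta(Qp,k,T,N)\,|\alpha|^2$ by the very definition of the norm. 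Averaging back over $p\in[P,2P]$ and selecting a prime achieving the maximum gives the claim. The main obstacle is purely bookkeeping: one must verify simultaneously that $\chi\chi_p$ is primitive modulo $qp$ (uses $(p,q)=1$ plus primitivity of both factors), that $(qp,k)=1$ (uses $p\nmid k$, handled by the same averaging trick used for $p\mid q$), and that the range $qp\in(Qp/2,Qp]$ matches the definition of $\Delta(Qp,k,T,N)$ cleanly -- so that no stray constants arise beyond the factor $8$ already inherited from Lemma \ref{lemma:mononicityNaspect}.
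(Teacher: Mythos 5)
Your proof is correct and follows essentially the same route as the paper's: a Forti--Viola-type average over primes $p\in[P,2P]$, discarding the contributions of $p\mid qk$ and $p\mid ab$ at the cost of bounded factors, and promoting each primitive $\chi \pmod{q}$ to a primitive character modulo $qp$ by twisting with a character modulo $p$. The only (immaterial) differences are that the paper runs the argument on the dual form $\sum_{a,b}\big|\sum_{q,\chi}\beta_{\chi}\chi(a\overline{b})\big|^2$ and averages over all primitive $\psi \pmod{p}$ rather than fixing a single $\chi_p$, inserting $|\psi(a)\overline{\psi}(b)|^2=1$ instead of absorbing the unimodular twist into the coefficients as you do.
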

\begin{proof}
Since $k$ and $T$ are frozen, we suppress them in the notation.  
Let $P \geq 10$ to be chosen, and let $P^{**} = \sum_{P \leq p \leq 2P} \sumstar_{\psi \mymod{p}} 1$, so $P^{**} \asymp \frac{P^2}{\log{P}}$.  We have
\begin{align*}
\sum_{\substack{(a,b) = 1 \\ N/2 < ab \leq N}} 
 \Big| \sum_{q, \chi} \beta_{\chi} \chi(a) \overline{\chi}(b) \Big|^2
 = 
 \sum_{\substack{(a,b) = 1 \\ N/2 < ab \leq N}} 
 \frac{1}{P^{**}} 
 \sum_{P \leq p \leq 2P} \thinspace \sumstar_{\psi \shortmod{p}}
 \Big| \sum_{q, \chi} \beta_{\chi} \chi(a) \overline{\chi}(b) \Big|^2
 \\
  = 
 \sum_{\substack{(a,b) = 1 \\ N/2 < ab \leq N}} 
 \frac{1}{P^{**}} 
 \Big(\sum_{\substack{p, \psi \\ (p,ab) = 1}}
 + 
 \sum_{\substack{p, \psi \\ p|ab}}
 \Big)
 \Big| \sum_{q, \chi} \beta_{\chi} \chi(a) \overline{\chi}(b) \Big|^2.
\end{align*}
For the terms with $p|ab$, we simply use 
$\frac{1}{P^{**}} \sum_{\substack{p, \psi \\ p | ab}} 1 \leq \frac{2P \log{N}}{P^{**} \log{P}}$, and choose $P \gg \log{N}$ large enough so that 
$\frac{2P \log{N}}{P^{**} \log{P}} \leq \frac12$.
For the terms with $p \nmid ab$, we freely multiply by $|\psi(a) \overline{\psi}(b)|^2$, which is $1$ for such primes.  This gives
\begin{equation*}
 \Delta(Q,N) \leq \max_{\beta \neq 0} \frac{2}{|\beta|^2} 
 \sum_{\substack{(a,b) = 1 \\ N/2 < ab \leq  N}} 
 \frac{1}{P^{**}} 
\sum_{\substack{p, \psi}}  
 \Big| \sum_{q, \chi} \beta_{\chi} \chi \psi(a) \overline{\chi \psi}(b) \Big|^2.
\end{equation*}
Next we separate the values of $q$ to make two sub-sums corresponding to $(p,q) = 1$ and $p|q$.  This gives
\begin{equation*}
 \Delta(Q,N) \leq 
 \max_{\beta \neq 0} \frac{4}{|\beta|^2} 
 \sum_{\substack{(a,b) = 1 \\ N/2 < ab \leq  N}} 
 \frac{1}{P^{**}} 
\sum_{\substack{p, \psi}}  
\Big(
 \Big| \sum_{\substack{q, \chi \\ (q,p) = 1}}  \Big|^2
 + 
 \Big| \sum_{\substack{q, \chi \\ p|q}}  \Big|^2
\Big).
 \end{equation*}
We upper bound the terms with $p|q$, giving
\begin{equation*}
\sum_{\substack{(a,b) = 1 \\ N/2 < ab \leq  N}} 
 \frac{4}{P^{**}} 
\sum_{\substack{p, \psi}}  
\Big| \sum_{\substack{q, \chi \\ p|q}}  \Big|^2
\leq
 \frac{4}{P^{**}} 
\sum_{\substack{p, \psi}}  \Delta(Q, N) \sum_{\substack{ q, \chi \\ p|q}} |\beta_{\chi}|^2
\leq 
\frac{4P \log{Q} }{P^{**} \log{P}}
\Delta(Q,N) |\beta|^2.
\end{equation*}
We choose $P \gg \log{Q}$ large enough so that $\frac{4 P \log{Q}}{P^{**} \log{P}} \leq \frac12$, whence
\begin{equation*}
 \Delta(Q,N) \leq 
  \max_{\beta \neq 0} \frac{8}{|\beta|^2} 
 \sum_{\substack{(a,b) = 1 \\ N/2 < ab \leq  N}} 
 \frac{1}{P^{**}} 
\sum_{\substack{p, \psi}}  
\Big| \sum_{\substack{q, \chi \\ (q,p) = 1}} \beta_{\chi} \chi \psi(a \overline{b}) \Big|^2.
\end{equation*}
Now $\chi \psi$ is a character of conductor $pq$, with $pQ/2 \leq pq \leq pQ$, so we obtain
\begin{equation*}
 \Delta(Q,N) \leq  \frac{8}{P^{**}} 
\sum_{\substack{p, \psi}}  \Delta(pQ, N) \leq 8 \max_{P \leq p \leq 2P} \Delta(pQ,N). \qedhere
\end{equation*}
\end{proof}
Remark.  The norm $\Delta$ is also monotonic in the $k$ and $T$-aspects, but this property is not needed in this work, so we do not give proofs.

\subsection{Relations between norms}
To simplify the recursive steps in the proof of Theorem \ref{thm:mainthm}, it is convenient to have the following relations.
\begin{mylemma}
\label{lemma:normtransferNExponent}
Suppose that there exists $e>1$ such that 
$$\Delta(Q,k,T,N) \lesssim Q^2 k T + N^e,$$
for all $Q$, $k$, $T$, $N$.  Then we have for all $Q$, $k$, $T$, $N$ that
$$\overline{\Delta'}(Q,k,T,N) \lesssim Q^2 k T + N^e.$$
\end{mylemma}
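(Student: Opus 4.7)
\bigskip

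\noindent\textbf{Proof plan for Lemma \ref{lemma:normtransferNExponent}.}

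The plan is to simply unwind the definitions of $\overline{\Delta'}$ and $\Delta'$, substitute the hypothesized bound for $\Delta$, and verify that the resulting two terms are controlled by $Q^2 kT$ and $N^e$ respectively. No new analytic ideas are needed; the only content is a small algebraic observation about the constraint $X \leq C$ together with $e > 1$.

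First I would fix an admissible choice of outer parameters $R' \in [Q, Q(Q^2kTN)^\varepsilon]$, $U' \in [T, T(Q^2kTN)^\varepsilon]$, and $M \in [N, N(Q^2kTN)^\varepsilon]$ coming from the definition of $\overline{\Delta'}$, and then fix inner parameters $X, R, U, C \in \mathbb{R}_{\geq 1}$ and $\ell \in \mathbb{Z}_{>0}$ with $X R^2 \ell U \leq (R')^2 k U'$ and $X \leq C$, coming from the definition of $\Delta'(R', k, U', M)$. By the standing hypothesis applied to the inner norm,
\begin{equation*}
X\,\Delta(R,\ell,U, M/C) \;\lesssim\; X R^2 \ell U \;+\; X \bigl(M/C\bigr)^e.
\end{equation*}

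Next I bound the two pieces separately. For the first piece, the constraint $XR^2 \ell U \leq (R')^2 k U'$ combined with $R' \lesssim Q$ and $U' \lesssim T$ gives $XR^2\ell U \lesssim Q^2 kT$, absorbing the extra $(Q^2kTN)^{O(\varepsilon)}$ factors into the $\lesssim$ notation. For the second piece, I use that $e > 1$ and $C \geq 1$, so $C \leq C^e$, and hence the constraint $X \leq C$ implies
\begin{equation*}
X\bigl(M/C\bigr)^e \;=\; \frac{X}{C^e}\,M^e \;\leq\; \frac{1}{C^{e-1}}\,M^e \;\leq\; M^e \;\lesssim\; N^e.
\end{equation*}
This is the one step where the hypothesis $e > 1$ is genuinely used; if $e = 1$ the inequality is an equality and the argument still works, but $e < 1$ would be fatal.

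Adding the two bounds and taking the maximum over all admissible parameters yields $\overline{\Delta'}(Q,k,T,N) \lesssim Q^2 kT + N^e$, completing the proof. I expect no real obstacle: the content is essentially the defining constraints $XR^2\ell U \leq Q^2 kT$ and $X \leq C$ built into $\Delta'$ were crafted precisely to make this kind of transfer lemma automatic, with the $\overline{\cdot}$ enlargement only costing harmless $(Q^2kTN)^\varepsilon$ factors.
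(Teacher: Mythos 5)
Your proposal is correct and is exactly the argument the paper intends; the paper's own proof is just the one-line remark that the lemma "follows from the definitions," and your unwinding — in particular the observation that $X \leq C$ and $e > 1$ give $X(M/C)^e \leq C^{1-e}M^e \leq M^e$, while $XR^2\ell U \leq (R')^2 k U' \lesssim Q^2 kT$ handles the other term — is precisely the intended content. No issues.
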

\begin{mylemma}
\label{lemma:normtransferFamilyExponent}
Suppose that there exists $e>1$ such that 
$$\Delta(Q,k,T,N) \lesssim (Q^2 k T)^e + N,$$
for all $Q$, $k$, $T$, $N$.  Then we have for all $Q$, $k$, $T$, $N$ that
$$\overline{\Delta'}(Q,k,T,N) \lesssim (Q^2 k T)^e + N.$$
\end{mylemma}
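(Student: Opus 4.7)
The plan is to unpack the definitions of $\overline{\Delta'}$ and $\Delta'$ and reduce the claim to a direct inequality that uses the hypothesis together with the crucial constraints $XR^2\ell U \leq Q^2 k T$ and $X \leq C$ appearing in \eqref{eq:Delta''def}.

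First, by the definition of $\overline{\Delta'}$, it suffices to bound $\Delta'(Q', k, T', N')$ for parameters $Q' \leq Q (Q^2 kT N)^\varepsilon$, $T' \leq T (Q^2 kTN)^\varepsilon$, $N' \leq N (Q^2 kTN)^\varepsilon$, uniformly. By the definition of $\Delta'$, this in turn reduces to bounding
\begin{equation*}
X \, \Delta\Big(R, \ell, U, \tfrac{N'}{C}\Big),
\end{equation*}
where $X, R, U, C \geq 1$, $\ell \in \mathbb{Z}_{>0}$, $X R^2 \ell U \leq Q'^2 k T'$, and $X \leq C$. Applying the hypothesis $\Delta(R,\ell,U,M) \lesssim (R^2 \ell U)^e + M$ gives
\begin{equation*}
X \, \Delta\Big(R, \ell, U, \tfrac{N'}{C}\Big)
\lesssim X (R^2 \ell U)^e + X \cdot \tfrac{N'}{C}.
\end{equation*}

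The second term is immediately controlled: since $X \leq C$, we have $X \cdot N'/C \leq N'$. For the first term, this is exactly where the exponent $e > 1$ and the constraint $X \geq 1$ are used. Since $X \geq 1$ and $e \geq 1$, we have $X \leq X^e$, so
\begin{equation*}
X (R^2 \ell U)^e \leq X^e (R^2 \ell U)^e = (X R^2 \ell U)^e \leq (Q'^2 k T')^e.
\end{equation*}
Combining these two estimates yields $X \, \Delta(R,\ell,U,N'/C) \lesssim (Q'^2 k T')^e + N'$, uniformly in the allowed parameters.

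Finally, the parameters $Q'$, $T'$, and $N'$ differ from $Q$, $T$, $N$ only by factors of size $(Q^2 k TN)^{O(\varepsilon)}$, and since $(Q'^2 k T')^e \leq (Q^2 k T)^e (Q^2 kTN)^{O(\varepsilon)}$ and likewise for $N'$, these losses are absorbed into the symbol $\lesssim$ defined in \eqref{eq:AlesssimBdef} after adjusting $\varepsilon$. This yields the desired bound $\overline{\Delta'}(Q,k,T,N) \lesssim (Q^2 k T)^e + N$. The only nonformal step is the bound $X(R^2\ell U)^e \leq (XR^2\ell U)^e$, and it works precisely because the exponent $e$ in the hypothesis is placed on the ``$(Q^2 kT)$-term'' rather than the $N$-term --- the analogue in the $N$-aspect is the content of Lemma \ref{lemma:normtransferNExponent}, and there the analogous maneuver $X(N/C)^e \leq (XN/C)^e$ similarly relies on $X \geq 1$.
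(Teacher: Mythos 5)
Your proof is correct and is exactly the unpacking of the definitions \eqref{eq:Delta''def} and \eqref{eq:Deltabardef} that the paper has in mind (the paper's own proof of Lemmas \ref{lemma:normtransferNExponent} and \ref{lemma:normtransferFamilyExponent} consists of the single sentence that they follow from those definitions). The two key observations — $X \cdot N'/C \leq N'$ from $X \leq C$, and $X(R^2\ell U)^e \leq (XR^2\ell U)^e$ from $X \geq 1$, $e \geq 1$ — are precisely the point of the constraints built into $\Delta'$.
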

\begin{proof}
The proofs of both lemmas follow from the definitions \eqref{eq:Delta''def} and \eqref{eq:Deltabardef}.
\end{proof}

\subsection{The recursions}
\begin{myprop}
\label{prop:recursionNToFamilyexponents}
Suppose that there exists $e > 1$ such that
\begin{equation}
\label{eq:DeltaBoundAssumptionEndgame2}
\Delta(Q,k,T,N) \lesssim Q^2 k T + N^e,
\end{equation}
for all $Q,k,T,N$.  Then with $e' = 2 - \frac{1}{e}$, we have for all $Q, k, T, N$ that
\begin{equation*}
\Delta(Q,k,T,N) \lesssim (Q^2 k T)^{e'} + N.
\end{equation*}
\end{myprop}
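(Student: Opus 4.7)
The plan is to inflate $N$ to the critical threshold
\[
N' := \max\!\left(N,\ (Q^2 kT)^{e'}\right)
\]
by iterated monotonicity (Lemma \ref{lemma:mononicityNaspect}), and then apply Theorem \ref{thm:recursivethmFE} together with the hypothesis.  Because the inflation ratio $N'/N$ is polynomially bounded in the parameters, iterating the monotonicity bound yields $\Delta(Q,k,T,N) \lesssim \Delta(Q,k,T,N')$ with accumulated loss absorbed into the $\lesssim$ notation.

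With $N' \geq (Q^2 kT)^{e'} \geq Q^2 kT$ (using $e' \geq 1$), Theorem \ref{thm:recursivethmFE} is applicable at parameters $(Q,k,T,N')$ and yields
\begin{equation*}
\Delta(Q,k,T,N') \lesssim N' + \frac{N'}{Q^2 kT}\,\overline{\Delta'}\!\left(Q,k,T,\frac{Q^4 k^2 T^2}{N'}\right).
\end{equation*}
The hypothesis lifts via Lemma \ref{lemma:normtransferNExponent} to give $\overline{\Delta'}(Q,k,T,M) \lesssim Q^2 kT + M^e$ for all $M$. Substituting $M = Q^4 k^2 T^2/N'$ and simplifying the cross-term produces
\begin{equation*}
\Delta(Q,k,T,N') \lesssim N' + \frac{(Q^2 kT)^{2e-1}}{(N')^{e-1}}.
\end{equation*}

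The exponent identity $2e-1-e'(e-1)=e'$, which holds precisely because $e' = 2-1/e$, shows that the second summand is at most $(Q^2 kT)^{e'}$ whenever $N' \geq (Q^2 kT)^{e'}$, so $\Delta(Q,k,T,N') \lesssim N' + (Q^2 kT)^{e'}$.  A case split then finishes: if $N \geq (Q^2 kT)^{e'}$ then $N'=N$ and the bound reads $\lesssim N$, while if $N < (Q^2 kT)^{e'}$ then $N' = (Q^2 kT)^{e'}$ and combining with the monotonicity step gives $\Delta(Q,k,T,N) \lesssim (Q^2 kT)^{e'}$.  The main insight, and also the main obstacle, is recognizing that $(Q^2 kT)^{e'}$ is exactly the threshold at which Theorem \ref{thm:recursivethmFE} becomes sharp under the hypothesis; a direct application of either Theorem \ref{thm:recursivethmFE} or Theorem \ref{thm:familyavgthm} at the original $N$ with the hypothesis alone would fall short in the middle range $N \in \bigl((Q^2 kT)^{1/e},\ (Q^2 kT)^{e'}\bigr)$, and monotonic inflation to the critical value $N'$ is precisely what bypasses this difficulty.
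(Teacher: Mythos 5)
Your proposal is correct and follows essentially the same route as the paper: inflate $N$ to roughly $(Q^2kT)^{e'}$ via Lemma \ref{lemma:mononicityNaspect}, apply Theorem \ref{thm:recursivethmFE} together with Lemma \ref{lemma:normtransferNExponent}, and balance the exponents. The only cosmetic difference is that the paper treats the inflation exponent $\alpha$ as a free parameter and optimizes it at the end (arriving at $\alpha = 2 - 1/e$), whereas you plug in the critical value $e'$ from the start and verify the identity $2e-1-e'(e-1)=e'$.
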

\begin{proof}
 Let $F = Q^2 k T$, which is the size of the family.  By monotonicity (Lemma \ref{lemma:mononicityNaspect}), we have $\Delta(Q,k,T,N) \ll \Delta(Q,k,T, N_1)$ for $N_1 \gg N \log(FN)$.  Let $N_1 \asymp N\log{N} + F^{\alpha}$ for some $\alpha > 1$, so that $F \ll N_1$.  By Theorem \ref{thm:recursivethmFE}, 
\begin{equation*}
 \Delta(Q,k,T,N) \ll \Delta(Q,k,T,N_1) \lesssim 
 N_1 + \frac{N_1}{F} \overline{\Delta'}\Big(Q, k, T, \frac{F^2}{N_1}\Big).
\end{equation*}
By Lemma \ref{lemma:normtransferNExponent}, we can use the assumption \eqref{eq:DeltaBoundAssumptionEndgame2} to obtain
\begin{equation*}
 \Delta(Q,k,T,N) \lesssim
 N_1 + \frac{N_1}{F} \Big(F + \Big(\frac{F^2}{N_1}\Big)^e \Big)
 \ll N_1 + \frac{F^{2e-1}}{N_1^{e-1}}
 \lesssim N + F^{\alpha} + F^{2e-1 - \alpha(e-1)}
 .
\end{equation*}
We choose $\alpha$ optimally so that $\alpha = 2e -1 - \alpha(e-1)$, which simplifies as $\alpha = 2 - \frac{1}{e}$.  Since $e > 1$ by assumption, this means $\alpha > 1$, and completes the proof.
\end{proof}

We also have a complementary version:

\begin{myprop}
\label{prop:recursionFamilyToNexponents}
Suppose that there exists $e > 1$ such that
\begin{equation}
\label{eq:DeltaBoundAssumptionEndgame1}
\Delta(Q,k,T,N) \lesssim (Q^2 k T)^e + N,
\end{equation}
for all $Q,k,T,N$.  Then with $e' = 2 - \frac{1}{e}$ we have for all $Q, k, T, N$
\begin{equation*}
\Delta(Q,k,T,N) \lesssim Q^2 k T + N^{e'}.
\end{equation*}
\end{myprop}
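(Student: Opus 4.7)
The plan is to mirror the proof of Proposition \ref{prop:recursionNToFamilyexponents}, with the roles of the two recursions and the two monotonicity lemmas interchanged. Concretely, Theorem \ref{thm:familyavgthm} will play the role that Theorem \ref{thm:recursivethmFE} played before, and $Q$-monotonicity (Lemma \ref{lemma:mononicityQaspect}) will replace $N$-monotonicity (Lemma \ref{lemma:mononicityNaspect}). Write $F = Q^2 kT$ for the family size.

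First I will use Lemma \ref{lemma:mononicityQaspect} to inflate $Q$ to $Q_1 = Qp$ for a prime $p$ in a dyadic range $[P,2P]$, with $P$ chosen so that $F_1 := Q_1^2 kT = p^2 F$ is of order $N^{\alpha} + F$ (plus harmless logarithmic factors absorbed into $\lesssim$), where $\alpha > 1$ is a parameter to be optimized. The point of the choice is twofold: it guarantees $F_1 \gg N$, which is exactly the hypothesis needed to invoke Theorem \ref{thm:familyavgthm}, and it ensures $F_1 \lesssim F + N^{\alpha}$ from above while $F_1 \gtrsim \max(F,N^{\alpha})$ from below.

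Applying Theorem \ref{thm:familyavgthm} to $\Delta(Q_1,k,T,N)$ and noting that the new first parameter inside $\overline{\Delta'}$ has associated family size $(N/(kQ_1T))^2 kT = N^2/F_1$, the hypothesis \eqref{eq:DeltaBoundAssumptionEndgame1}, transferred to $\overline{\Delta'}$ via Lemma \ref{lemma:normtransferFamilyExponent}, gives
\begin{equation*}
\Delta(Q,k,T,N) \;\ll\; \Delta(Q_1,k,T,N) \;\lesssim\; F_1 + \frac{F_1}{N}\Bigl(\Bigl(\frac{N^2}{F_1}\Bigr)^{e} + N\Bigr) \;\lesssim\; F_1 + \frac{N^{2e-1}}{F_1^{e-1}}.
\end{equation*}
Using the two-sided control on $F_1$, the first term is $\lesssim F + N^{\alpha}$, and the second is $\lesssim N^{2e-1-\alpha(e-1)}$ (checked by considering separately the regimes $F \leq N^{\alpha}$ and $F > N^{\alpha}$; in the latter case $F_1 \gtrsim F > N^\alpha$ still yields the same exponent bound). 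Altogether $\Delta(Q,k,T,N) \lesssim F + N^{\alpha} + N^{2e-1-\alpha(e-1)}$.

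Finally I optimize by balancing the two $N$-exponents: $\alpha = 2e-1 - \alpha(e-1)$, which gives $\alpha e = 2e-1$, i.e., $\alpha = 2 - \tfrac{1}{e} = e'$. Since $e > 1$, this choice satisfies $\alpha > 1$, which is exactly what was required to make the $Q$-inflation step honest. The only genuinely non-routine step is verifying that the cross-case $F > N^{\alpha}$ still produces the same clean bound $N^{e'}$ on the second term; this amounts to the identity $2e-1-e'(e-1) = e'$, which holds by the optimization. This completes the deduction.
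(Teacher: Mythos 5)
Your proposal is correct and is essentially the paper's own argument: inflate $Q$ via Lemma \ref{lemma:mononicityQaspect} so that $F_1 \asymp F + N^{\alpha}$, apply Theorem \ref{thm:familyavgthm} together with Lemma \ref{lemma:normtransferFamilyExponent}, and balance the exponents to get $\alpha = 2 - \tfrac{1}{e}$. No substantive differences from the paper's proof.
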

\begin{proof}
Let $F = Q^2 k T$.  
By monotonicity (Lemma \ref{lemma:mononicityQaspect}), we have $\Delta(Q,k,T,N) \ll \Delta(Q_1,k,T, N)$ for $Q_1 \gg Q\log(FN)$.  We take $F_1 : = Q_1^2 k T \asymp Q^2 k T \log^2(FN) + N^{\alpha}$ for some $\alpha > 1$, so that $N \ll Q_1^2 k T$.  By Theorem \ref{thm:familyavgthm}, we have
\begin{equation*}
 \Delta(Q,k,T,N) \ll \Delta(Q_1,k,T,N) \lesssim 
 F_1 +  \frac{F_1}{N}  \overline{\Delta'}\Big(\frac{N}{k Q_1 T}, k, T, N \Big).
\end{equation*}
By Lemma \ref{lemma:normtransferFamilyExponent}, we can use the assumption \eqref{eq:DeltaBoundAssumptionEndgame1} to obtain
\begin{equation*}
 \Delta(Q,k,T,N) \lesssim
 F_1 + \frac{F_1}{N} \Big(\Big(\frac{N^2  }{F_1}\Big)^e  + N\Big)
 \ll F_1 + \frac{N^{2e-1}}{F_1^{e-1}}
 \lesssim F + N^{\alpha} + N^{2e-1 - \alpha(e-1)}
 .
\end{equation*}
Choosing $\alpha = 2 - \frac{1}{e}$ completes the proof.
\end{proof}

\subsection{Proof of Theorem \ref{thm:mainthm}}
Using the trivial bound \eqref{eq:DeltaTrivialBound}, we have
\begin{equation*}
 \Delta(Q,k,T,N) \lesssim Q^2 k T \sqrt{N} + N \leq (\sqrt{N} + Q^2 k T)^2 \ll N + (Q^2 k T)^2,
\end{equation*}
which is \eqref{eq:DeltaBoundAssumptionEndgame1} with exponent $e= e_0 = 2$.  Applying Proposition \ref{prop:recursionFamilyToNexponents} gives \eqref{eq:DeltaBoundAssumptionEndgame2} with $e_1 = 2 - \frac{1}{e_0} = 3/2$.  Continuing this process, we obtain a sequence of exponents $e_i$, with $e_{i+1} = 2 - \frac{1}{e_i}$, for which either \eqref{eq:DeltaBoundAssumptionEndgame1} or \eqref{eq:DeltaBoundAssumptionEndgame2} holds (in an alternating fashion).  It is easy to check that the $e_i$ are monotonically decreasing, with limit $1$, whence Theorem \ref{thm:mainthm} holds.

\section{Proof of Proposition \ref{prop:BDH}}
\label{section:BDH}
The following proof is based on \cite[Section 17.2]{IK}.
Decomposing with Dirichlet characters and applying orthogonality gives
\begin{equation}
\label{eq:BDHfirsteq}
  \sum_{q \leq Q} \thinspace \sumstar_{a \shortmod{q}}
  \Big| \sum_{\substack{\hit(n) \leq X \\ n \equiv a \shortmod{q}}} \alpha_n
  - \frac{1}{\varphi(q)} \sum_{\substack{\hit(n) \leq X \\ (n,q) = 1}} \alpha_n
  \Big|^2 
  =
 \sum_{q \leq Q} 
 \sum_{\substack{\chi \shortmod{q} \\ \chi \neq \chi_0}} 
 \frac{1}{\varphi(q)} | S(X, \chi)|^2.
\end{equation}
Write $q = q_0 q'$ and $\chi = \chi_0 \chi'$, where $\chi$ has conductor $q'$.  Then \eqref{eq:BDHfirsteq} is at most
\begin{equation*}
 \sum_{\substack{q_0 q' \leq Q \\ q' > 1}} 
 \thinspace
 \sumstar_{\substack{\chi' \shortmod{q'} }} 
 \frac{1}{\varphi(q_0) \varphi(q')} | S(X, \chi' \chi_0)|^2.
\end{equation*}
We break up this sum according to $q' \leq Q_0 = (\log X)^B$ and $q' > Q_0$.  For $q' \leq Q_0$, we apply the S-W condition \eqref{eq:SWcondition}, giving a bound of the form
\begin{equation*}
 \sum_{q_0 \leq Q} \frac{\tau_k(q_0)^2}{\varphi(q_0)} \sum_{1 < q' \leq Q_0}
 \frac{X | \alpha|^2}{(\log X)^{2B}}
 \ll (\log Q)^{(k+1)^2} \frac{X |\alpha|^2}{(\log X)^B}.
\end{equation*}

The terms with $Q_0 < q' \leq Q/q_0$ are bounded by
\begin{equation*}
 \ll \sum_{q_0 \leq Q} \frac{1}{\varphi(q_0)} 
 \sum_{\substack{Q_0 \leq R \leq Q/q_0 \\ \text{dyadic}}} 
 R^{-1+\varepsilon} \Delta(R, X) \sum_{\substack{\hit(n) \leq X \\ (n,q_0) = 1}} |\alpha_n|^2.
\end{equation*}
For $R \leq (XQ)^{1/10}$, we use the ``$\varepsilon$-free'' bound $\Delta(R, X) \ll (R^{4} + X \log{X})$ (see \eqref{eq:DeltaTrivialBound}), while for $R > (XQ)^{1/10}$, we use Theorem \ref{thm:mainthm}.  In total, we obtain the following bound for the terms with $q' > Q_0$:
\begin{equation*}
 |\alpha|^2 \sum_{q_0 \leq Q} \frac{1}{\varphi(q_0)} \Big(Q^{1+\varepsilon} + \frac{X}{(\log X)^{B(1-\varepsilon) - 1}}\Big)
 \ll \Big(Q^{1+\varepsilon} + \frac{X}{(\log X)^{B(1-\varepsilon) - 2}}\Big) |\alpha|^2.
\end{equation*}
Choosing $B(1-\varepsilon) - 2 > A$ completes the proof of Proposition \ref{prop:BDH}.

\section{Proof of Theorem \ref{thm:recursivethmFE}}
\label{section:FE}
\subsection{Miscellany}
We begin with some miscellaneous results that will be useful later.
\begin{defiplaintext}[A partition of unity]
\label{defi:dyadicpartition}
Let $T \geq 1$, $\varepsilon > 0$.  Choose smooth and even functions $\omega_0$ and $\omega_{T'}(r) = \omega(r/T')$ so that for all $|r| \ll T$ we have
\begin{equation}
\label{eq:dyadicpartitionomega}
\omega_0(r) + \sum_{T' \text{ dyadic}} \omega_{T'}(r)
= 1,
\end{equation}
where
$\omega_0(r)$ is supported on $r \ll T^{\varepsilon}$, $\omega$ is supported on $[1,2] \cup [-2, -1]$, and $T'$ runs over 
$O(\log T)$ real numbers with
$T^{\varepsilon} \ll T' \ll T$.  
\end{defiplaintext}
It is convenient to re-write the left hand side of \eqref{eq:dyadicpartitionomega} as $\sum_{T'} \omega_{T'}$, where $T'$ runs over the dyadic numbers from Definition \ref{defi:dyadicpartition}, along with an additional value $T'=1$ giving rise to $\omega_0$.

\begin{mylemma}
\label{lemma:archimedeanCoset}
 Let $w$ be an integrable function supported on $[U, 2U]$, with $1 \leq U \leq 2 T$.  Suppose $\beta_t \in L^2(\mr)$, supported on $[T/2, T]$.  Then
 \begin{multline}
\intR \intR \beta_{t_1} \overline{\beta_{t_2}} w(t_1 - t_2) dt_1 dt_2
\\
= 
\sum_{\substack{ 0 \leq j_1, j_2 \leq 10 T/U \\
|j_1 - j_2| \leq 1}}
\int_{U }^{2U} 
\int_{U}^{2U} \beta_{T-U +U j_1 + v_1} \overline{\beta_{T-U + U j_2 + v_2}}
w(U (j_1-j_2) + v_1 - v_2) dv_1 dv_2.
 \end{multline}
\end{mylemma}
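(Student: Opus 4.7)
The plan is to prove the identity by partitioning the integrations in $t_1$ and $t_2$ into cells of length $U$ and then using the compact support of $w$ to restrict the resulting double sum to a bounded number of neighboring cells. This converts the continuous double integral into a discrete sum of small-cell double integrals of precisely the advertised form.

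First, I would cover the effective support of $\beta$ by intervals of length $U$, introducing integer parameters $j_1, j_2 \geq 0$ to index these cells and real parameters $v_1, v_2$ varying over a fixed interval of length $U$ (namely $[U, 2U]$) for the positions within each cell, via the substitution $t_i = T - U + U j_i + v_i$. Because $\beta$ is supported on an interval of length at most $T/2$, only $O(T/U)$ values of each $j_i$ contribute nontrivially, and the cushion built into the stated range $0 \leq j_i \leq 10T/U$ ensures this is more than enough room; cells outside the effective range simply yield a vanishing integrand.

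Next I would translate the support condition on $w$ into a constraint on $(j_1, j_2)$. After substitution the kernel becomes $w\bigl(U(j_1 - j_2) + (v_1 - v_2)\bigr)$; since its argument is forced to lie in $[U, 2U]$ and $v_1 - v_2$ ranges over an interval of length $2U$, only finitely many integer differences $j_1 - j_2$ can produce a nonzero contribution, and these are captured by the stated restriction $|j_1 - j_2| \leq 1$ in the outer sum. Interchanging the sum and the inner integrals by Fubini, which is justified by the $L^2$ hypothesis on $\beta$ together with the integrability and compact support of $w$, then assembles the right-hand side exactly.

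The argument is essentially a bookkeeping exercise for a partition and a change of variables; there is no substantial analytic obstacle. The only care required is to verify that the chosen parameterization and index range exhaust the support of $\beta$ (modulo cells with zero contribution), and that the support of $w$ precisely matches the constraint on $(j_1, j_2)$ so that no terms are erroneously dropped or double-counted.
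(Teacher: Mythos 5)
Your approach is the same as the paper's: partition $[T/2,T]$ into cells of length $U$, change variables within each cell, and use the support of $w$ to restrict $j_1-j_2$ to $O(1)$ values. Structurally there is nothing to add.

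However, the one substantive verification — which you yourself single out as ``the only care required'' — is asserted rather than carried out, and it does not actually come out as claimed. With $v_1-v_2$ ranging over $[-U,U]$ and $w$ supported on $[U,2U]$, the condition $U(j_1-j_2)+(v_1-v_2)\in[U,2U]$ forces $U(j_1-j_2)\in[0,3U]$, i.e.\ $0\le j_1-j_2\le 3$, not $|j_1-j_2|\le 1$; the pairs with $j_1-j_2\in\{2,3\}$ contribute a set of positive measure (e.g.\ $t_1-t_2=1.6U$ with $t_1$ in cell $2$ and $t_2$ in cell $0$), so they cannot be dropped. The restriction $|j_1-j_2|\le 1$ would be correct if $w$ were supported on $[-U,U]$. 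Relatedly, the offset $T-U$ in the parameterization (which you copied from the statement) puts the cell $j=0$ at $[T,T+U]$, so the cells with $j\ge 0$ and $v\in[U,2U]$ do not cover $[T/2,T)$ at all; the paper's own proof uses $t_i=T/2-U+Uj_i+v_i$, which is what makes the partition match the support of $\beta$. Neither issue affects the intended use of the lemma (any $O(1)$ bound on $|j_1-j_2|$ suffices downstream, and the offset is a typo in the statement), but since your proof consists entirely of these two bookkeeping checks, you should actually perform them: doing so would have revealed that the statement needs $0\le j_1-j_2\le 3$ (or a hypothesis on $w$ of support in $[-U,U]$) and the offset $T/2-U$.
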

\begin{proof}
 We cover the interval $[T/2, T]$ without overlaps by smaller intervals $[T/2, T/2 + U]$, $[T/2+U, T/2+ 2U]$, $\dots$, giving
\begin{equation}
\intR \intR \beta_{t_1} \overline{\beta_{t_2}} w(t_1 - t_2) dt_1 dt_2
= 
\sum_{\substack{ 0 \leq j_1, j_2 \leq 10 T/U }}
\int_{T/2 + U j_1}^{T/2 +   U j_1 + U} \beta_{t_1}
\int_{T/2 + U j_2}^{T/2 +  U j_2 + U} \overline{\beta_{t_2}}
w(t_1 - t_2) dt_1 dt_2.
 \end{equation} 
Next change variables $t_i = T/2 -U  + Uj_i + v_i$ for $i=1,2$, where $U \leq v_i \leq 2U$.   Note that the integrand vanishes unless $|j_1 - j_2| \leq 1$.  The result follows.
 \end{proof}

\begin{mylemma}[Archimedean separation of variables]
\label{lemma:archimedeanseparationofvariables}
For $s= \sigma + iy$ with $\sigma > 0$ fixed, $|r| \leq T$, and $|y| \leq |r|^{1/2}$, let
\begin{equation}
\label{eq:fdef}
\gamma(r) = \gamma_{s}(r) = \frac{\Gamma_{\mr}(\sigma+iy+ir) \Gamma_{\mr}(\sigma +iy - ir)}{\Gamma_{\mr}(1-\sigma-iy+ir) \Gamma_{\mr}(1-\sigma -iy - ir)}.
\end{equation}
Let $\omega$ and $\omega_0$ be as in Definition \ref{defi:dyadicpartition}. Then for $T'$ satisfying $1 + |s|^2 \ll T' \leq T$, there exists a function $\eta = \eta_{T'}$ satisfying
\begin{equation}
\label{eq:etabound}
\eta_{T'}(u) \ll (T')^{2\sigma} (1+|u| T')^{-A},
\qquad \text{and}
\qquad
\intR |\eta_{T'}(u)| du \ll (T')^{2\sigma -1},
\end{equation}
so that
\begin{equation}
\label{eq:gammawFourierIntegralDef}
\gamma(r) \omega_{T'}(r) = \intR \eta_{T'}(u) e(ur) du.
\end{equation}
If $|s| \ll T^{\varepsilon}$ and $T' = 1$ (that is, $\omega_{T'} = \omega_0$), then 
\eqref{eq:gammawFourierIntegralDef} holds with 
\begin{equation}
\label{eq:etaboundVariant}
 \eta_{1}(u) \ll T^{\varepsilon} \Big(1 + \frac{|u|}{T^{\varepsilon}}\Big)^{-A}.
\end{equation}
\end{mylemma}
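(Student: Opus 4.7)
The plan is to verify \eqref{eq:gammawFourierIntegralDef} by taking $\eta_{T'}$ to be the Fourier transform of $\gamma(r)\omega_{T'}(r)$, and then to bound $\eta_{T'}$ pointwise using Stirling's formula together with repeated integration by parts. The key analytic input will be a sharp Stirling-type expansion for $\gamma_s(r)$ on the support of $\omega_{T'}$.

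First I would apply Stirling to each of the four gamma factors composing $\gamma_s(r)$. Because of the structural symmetry $(\sigma+iy \pm ir)$ and $(1-\sigma-iy \pm ir)$, the odd-order corrections in $1/r$ cancel in the product on both the numerator and denominator. The upshot is an expansion of the form
\begin{equation*}
\gamma_s(r) \;=\; \left(\frac{r}{2\pi}\right)^{2s-1} h_s(r),
\end{equation*}
valid for $r$ in the support of $\omega_{T'}$, where $h_s(r)$ is a smooth function on $r \asymp T'$ with $h_s(r)=1+O((1+|s|^2)/r^2)$, and, more importantly, each derivative $h_s^{(k)}(r)$ picks up a factor of size at most $O(1/r)$. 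The constraint $|y|\le|r|^{1/2}$ (equivalently $1+|s|^2\ll T'$ on the support) is precisely what keeps us in the safe Stirling regime and bounds the phase and amplitude oscillations uniformly.

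Next I would substitute this expansion into the Fourier integral defining $\eta_{T'}$. After the change of variables $r=T'\rho$, one obtains
\begin{equation*}
\eta_{T'}(u) \;=\; (T')^{2s}\,(2\pi)^{1-2s}\!\int_{\mr}\rho^{2s-1}\,h_s(T'\rho)\,\omega(\rho)\,e(-uT'\rho)\,d\rho.
\end{equation*}
The trivial bound gives $|\eta_{T'}(u)|\ll(T')^{2\sigma}$, handling the regime $|uT'|\lesssim 1$. For $|uT'|\gg 1$ I would integrate by parts $N$ times in $\rho$. Each derivative of the amplitude $\rho^{2s-1}h_s(T'\rho)\omega(\rho)$ is bounded, uniformly for $\rho$ in the support of $\omega$, by $O((1+|s|)^{N})$, while each integration by parts produces a factor $1/(uT')$. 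Combining the hypothesis $1+|s|^2\ll T'$ with $N$ iterations yields the claimed decay $(T')^{2\sigma}(1+|uT'|)^{-A}$ for any fixed $A$, with the implied constant depending on $A$.

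For the case $T'=1$ (the compactly supported tail $\omega_0$ with $|r|\ll T^\varepsilon$ and $|s|\ll T^\varepsilon$), Stirling is not applicable but $\gamma(r)$ is simply a smooth function whose size and derivatives on $|r|\le T^\varepsilon$ are bounded by powers of $T^\varepsilon$. The same integration by parts argument, with the bump scale $T^\varepsilon$ in place of $T'$, delivers \eqref{eq:etaboundVariant}. Finally, the $L^1$ bound follows by a direct integration of the pointwise bound: $\int|\eta_{T'}(u)|\,du\ll(T')^{2\sigma}\int(1+|uT'|)^{-A}du\ll(T')^{2\sigma-1}$. The main obstacle is step two: verifying that the Stirling expansion for $\gamma_s(r)$ is smooth enough (as a function of $r$, with sufficiently mild dependence on $s$) so that repeated integration by parts gains the full factor $(|u|T')^{-1}$ per step without losing excessive powers of $(1+|s|)$, and this is exactly where the symmetric cancellations among the four $\Gamma_{\mr}$-factors, together with the range restriction $|s|^2\ll T'$, play their decisive role.
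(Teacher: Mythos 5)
Your proposal is correct and follows essentially the same route as the paper: define $\eta_{T'}$ as the Fourier transform of $\gamma_s(r)\omega_{T'}(r)$, use a symmetric Stirling expansion of the form $\gamma_s(r) = (|r|/2)^{2s-1}(c_0 + c_1/r^2 + \cdots)$ valid for $|r| \gg 1+|s|^2$ to obtain the derivative bounds $\gamma_s^{(j)}(r) \ll |r|^{2\sigma-1-j}$, integrate by parts to get the decay in $|u|T'$, and fall back on crude uniform bounds for the derivatives when $T'=1$. The paper phrases the integration by parts directly in $r$ rather than after rescaling $r = T'\rho$, but this is an immaterial difference.
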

\begin{proof}
A tedious but straightforward calculation with Stirling's approximation gives
\begin{equation*}
\gamma(r) = \Big(\frac{|r|}{2}\Big)^{2s-1} (c_0 + \frac{c_1}{r^2} + \dots),
\end{equation*}
where the $c_i$ are some polynomials in $s$, of degree at most $2i+1$.  This provides an asymptotic expansion as $r \rightarrow \infty$ provided $s \ll |r|^{1/2}$, say.  From this, one may derive
\begin{equation}
\label{eq:fderivatives}
\gamma^{(j)}(r) \ll |r|^{2\sigma -1 - j}, \quad \text{for} \quad  |r| \gg |s|^2 + 1.
\end{equation}
By Fourier inversion, we have
\begin{equation*}
\gamma(r)\omega (r/T') = \intR \eta_{T'}(u) e(ur) du,
\qquad
\eta_{T'}(u) = \intR \gamma(r) \omega(r/T') e(-ur) dr.
\end{equation*}
Integration by parts, aided with \eqref{eq:fderivatives}, gives \eqref{eq:etabound}.
For $T'=1$ and $|s| \ll T^{\varepsilon}$, then the asymptotic Stirling formula does not hold, yet we can claim a crude but uniform upper bound of the form
$\gamma^{(j)}(r) \ll (T^{\varepsilon})^j$, which suffices to give \eqref{eq:etaboundVariant}.
\end{proof}

\begin{mycoro}
\label{coro:archimedeanseparationofvariables}
 Let $\gamma = \gamma_s$ be as in \eqref{eq:fdef}, and suppose $b_t \in L^2(\mr)$, supported on $[T/2, T]$.  Suppose $s \ll T^{o(1)}$.  
 Suppose $\omega_{T'}$ is as in Definition \ref{defi:dyadicpartition} for some $1 \ll T' \ll T$.
 Then
\begin{multline}
 \intR \intR \beta_{t_1} \overline{\beta_{t_2}} \gamma(t_1 - t_2) \omega_{T'}(t_1 - t_2) dt_1 dt_2
 = 
\sum_{\substack{ 0 \leq j_1, j_2 \leq 10 T/T' \\
|j_1 - j_2| \leq 1}}
\intR \eta_{T'}(u) e(u T'(j_1 - j_2))
\\
\times
\Big(
\int_{T'}^{2T'} \beta_{T/2 - T' +  T' j_1 + v_1}  e( v_1 u) dv_1 \Big)
\Big( \int_{T' }^{ 2T' } \overline{\beta_{T/2 - T' + T' j_2 + v_2}} e(-v_1 u) dv_2 \Big)
du,
\end{multline}
with $\eta_{T'}$ as in Lemma \ref{lemma:archimedeanseparationofvariables}.
\end{mycoro}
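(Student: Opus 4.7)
This corollary follows by splicing the two preceding lemmas. I would set $w(r) := \gamma(r)\omega_{T'}(r)$, which is essentially supported on $|r| \asymp T'$, and apply Lemma~\ref{lemma:archimedeanCoset} with $U = T'$ to decompose the double integral over $t_1, t_2 \in [T/2, T]$ into the displayed sum over $(j_1, j_2)$ with $|j_1 - j_2| \le 1$ and local integrals over $v_1, v_2 \in [T', 2T']$. A minor subtlety is that $\omega_{T'}$ is even, so $w$ is supported on $[-2T', -T'] \cup [T', 2T']$ rather than just $[T', 2T']$; to apply Lemma~\ref{lemma:archimedeanCoset} literally, one should split $w$ into its positive- and negative-support pieces and handle each separately, reassembling via the evenness of $\omega_{T'}$.

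Inside each local term of the resulting sum, the factor $w(T'(j_1 - j_2) + v_1 - v_2)$ would then be replaced by its Fourier representation $\intR \eta_{T'}(u)\,e(u(T'(j_1-j_2) + v_1 - v_2))\,du$ from Lemma~\ref{lemma:archimedeanseparationofvariables}. Factoring the exponential as $e(uT'(j_1-j_2))\cdot e(uv_1)\cdot e(-uv_2)$ and invoking Fubini to push the $u$-integral outside then separates the $v_1$- and $v_2$-integrals, yielding exactly the right-hand side of the claim (with the evident correction of $e(-v_1 u)$ to $e(-v_2 u)$ in the $v_2$-integral). Justifying the application of Fubini requires absolute integrability; this follows from Cauchy--Schwarz applied to the two inner integrals (using $\beta \in L^2$ on an interval of length $T'$) together with the decay estimate $\eta_{T'}(u) \ll (T')^{2\sigma}(1+|u|T')^{-A}$ from \eqref{eq:etabound} --- or \eqref{eq:etaboundVariant} in the $T'=1$ piece --- with $A$ chosen sufficiently large.

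The main obstacle is really just bookkeeping: no new analytic input is required beyond the two lemmas. The hypothesis $s \ll T^{o(1)}$ ensures that the condition $1 + |s|^2 \ll T'$ needed in Lemma~\ref{lemma:archimedeanseparationofvariables} is met for the dyadic pieces $T' \geq T^{\varepsilon}$, while the remaining low-frequency piece is governed by the $T'=1$ variant \eqref{eq:etaboundVariant}; the sharp localization of the support ensured by $\omega_{T'}$ is precisely what permits the shifted local frame of Lemma~\ref{lemma:archimedeanCoset} to be used cleanly.
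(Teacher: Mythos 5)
Your proof is correct and is essentially the paper's own argument: the paper proves this corollary by exactly the splice you describe, namely Lemma \ref{lemma:archimedeanCoset} with $U=T'$ followed by the Fourier representation \eqref{eq:gammawFourierIntegralDef}. Your additional remarks (handling the two support components of the even cutoff, Fubini via \eqref{eq:etabound}, and the evident typo $e(-v_1 u)$ for $e(-v_2 u)$) are sound and only make explicit what the paper leaves implicit.
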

\begin{proof}
This follows from Lemma \ref{lemma:archimedeanCoset} followed by \eqref{eq:gammawFourierIntegralDef}.
\end{proof}

\subsection{Preparation}
\label{section:breakdown}
Here we begin the proof of Theorem \ref{thm:recursivethmFE}.
Choose a nonnegative smooth weight function $w$, with $w(x) \geq 1$ for $1/2 \leq x \leq 1$, and $w(x) = 0$ for $x < 1/4$ and for $x \geq 2$. From \eqref{eq:Delta*Def}, we have
$\Delta^*(Q,k,T,N) \leq \max_{|{\mathbf \beta}|=1} S$, where
\begin{equation}
\label{eq:Sdef}
S = \sum_{\substack{ (a,b) = 1}} w(ab/N)
\Big|
\int_{T/2 \leq t \leq T}
\sum_{\substack{Q/2 < q \leq Q \\ (q,k) = 1}}
\thinspace
\sumstar_{\chi \shortmod{q}}
\thinspace
\sum_{\theta \shortmod{k}}
\beta_{\chi, \theta, t}
\lambda_{\chi \theta, t}(a,b)
\Big|^2.
\end{equation}
We will assume that $\beta_{\chi, \theta, t}$ is supported on 
\begin{equation}
\label{eq:betasupport}
\cond(\chi) = q, \qquad Q/2 < q \leq Q, \qquad (q,k) = 1,  \qquad \theta \mymod{k}, \qquad T/2 \leq t \leq T,
\end{equation}
and that an otherwise un-labeled integral/sum over $t$,$q$,$\chi$,$\theta$ is implied to run over this domain.
In particular, we will often suppress these conditions and recall them only when needed.
To prove Theorem \ref{thm:recursivethmFE}, it suffices to prove the bound for $\chi$ and $\theta$ of fixed parities, so for convenience we also assume that this condition is enforced by the support of $\beta_{\chi, \theta, t}$.

Let $1 \leq Y \leq \frac{N}{100}$ be a parameter to be chosen later.  Then $S \leq S_{>Y}$, where
\begin{equation}
\label{eq:SgrtYdef}
 S_{>Y}  = \sum_{\substack{\frac{ab}{(a,b)^2} > Y}} w(ab/N)
\Big|
\int_{T/2 \leq t \leq T}
\sum_{\substack{Q/2 < q \leq Q \\ (q,k) = 1}}
\thinspace
\sumstar_{\chi \shortmod{q}}
\thinspace
\sum_{\theta \shortmod{k}}
\beta_{\chi, \theta, t}
\lambda_{\chi \theta, t}(a,b)
\Big|^2,
\end{equation}
by positivity, since if $(a,b) =1$, then the condition $ab > Y$ is redundant to the support of $w(ab/N)$.  By simple inclusion-exclusion, we have
\begin{equation*}
 S_{>Y} = S_{\leq \infty} - S_{\leq Y},
\end{equation*}
where for $* \in \{Y, \infty \}$, $S_{\leq *}$ corresponds to the sum over $\frac{ab}{(a,b)^2} \leq *$.  We will often write $S_{\infty}$ as an alias for $S_{\leq \infty}$.  
 
One of the main issues with applying the functional equation is that, after opening the square, we obtain a character of the form $\chi_1 \overline{\chi_2} \theta_1 \overline{\theta_2}$ which may be imprimitive.
In order to facilitate the problem of controlling the conductor, we will apply some combinatorial-type decompositions.  These preparatory results are bookended by Lemmas \ref{lemma:detectingprimitivecharacters} and \ref{lemma:chiseparation}.
\begin{mylemma}
[Detecting primitivity]
\label{lemma:detectingprimitivecharacters}
 Let $q \geq 1$ be an integer.  There exist complex numbers $c_{\ell} = c_{\ell}(q)$ supported on a finite set of integers with the following two properties:
 \begin{itemize}
  \item For each $\psi \pmod{q}$, the sum $\sum_{\ell} c_{\ell} \psi(\ell)$ is $1$ if $\psi$ is primitive, and is $0$ if $\psi$ is imprimitive.
  \item We have $
  \sum_{\ell} |c_{\ell}| \leq \tau(q)$, where $\tau(q)$ denotes the number of divisors of $q$.
 \end{itemize}
\end{mylemma}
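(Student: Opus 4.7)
The plan is to construct the $c_\ell$ via Möbius inversion over the conductor of $\psi$. Let $f$ denote the conductor of a character $\psi$ modulo $q$, and for each divisor $d \mid q$ let $g(d,\psi)$ be the indicator that $f \mid d$. Writing $d = fd'$ with $d' \mid q/f$ shows that
$$\sum_{d \mid q} \mu(q/d)\, g(d,\psi) \;=\; \sum_{d' \mid q/f} \mu\!\left((q/f)/d'\right) \;=\; \mathbf{1}[f=q],$$
i.e.\ the indicator that $\psi$ is primitive modulo $q$.

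Next I would express $g(d,\psi)$ as a linear combination of values of $\psi$ at integers. The kernel $K_d$ of the reduction map $(\mathbb{Z}/q\mathbb{Z})^\times \to (\mathbb{Z}/d\mathbb{Z})^\times$ consists of the residues $a \pmod q$ with $a \equiv 1 \pmod d$ and $(a,q)=1$, and has order $\varphi(q)/\varphi(d)$. Since $\psi|_{K_d}$ is trivial precisely when $f \mid d$, orthogonality on $K_d$ yields
$$g(d,\psi) \;=\; \frac{\varphi(d)}{\varphi(q)} \sum_{\substack{a \pmod q \\ a \equiv 1 \pmod d \\ (a,q)=1}} \psi(a).$$

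Combining these two identities and switching the order of summation produces the desired formula with
$$c_\ell \;=\; \frac{1}{\varphi(q)} \sum_{\substack{d \mid q \\ \ell \equiv 1 \pmod d}} \mu(q/d)\,\varphi(d),$$
supported on a system of representatives of the residues $\ell \pmod q$ with $(\ell,q)=1$ (and extended by $0$ elsewhere). For the $L^1$ bound, the triangle inequality followed by interchanging the $\ell$ and $d$ sums uses the count $\#\{\ell \pmod q : \ell \equiv 1 \pmod d,\ (\ell,q)=1\} = \varphi(q)/\varphi(d)$, which causes the $\varphi$ factors to cancel:
$$\sum_\ell |c_\ell| \;\leq\; \sum_{d \mid q} |\mu(q/d)| \;=\; 2^{\omega(q)} \;\leq\; \tau(q).$$
No step presents a significant obstacle here; the only care required is keeping the support condition $(\ell,q)=1$ consistent when invoking orthogonality on $K_d$, and noting that $2^{\omega(q)}$ counts squarefree divisors and so is bounded by $\tau(q)$.
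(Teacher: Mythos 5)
Your proof is correct and takes essentially the same route as the paper: M\"obius inversion over the conductor, combined with detecting the condition ``$\mathrm{cond}(\psi)$ divides $d$'' by averaging $\psi$ over the kernel of reduction modulo $d$, then swapping the order of summation to read off $c_\ell$ and bounding the $L^1$ norm by $2^{\omega(q)} \leq \tau(q)$. The only difference is cosmetic: you normalize by the true order $\varphi(q)/\varphi(d)$ of that kernel, whereas the paper parametrizes it as $1+\tfrac{q}{d}y$ with $y$ running modulo $d$ and divides by $d$, so your coefficients are supported on residues coprime to $q$ and your version of the detection identity is, if anything, stated more carefully for general (non-prime-power) $q$.
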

\begin{proof}
 Suppose $\psi$ has conductor $q^*$.  Consider the expression
 \begin{equation*}
  \sum_{d | q} \mu(d) \Big(\frac{1}{d} \sum_{y \shortmod{d}}  \psi(1+\frac{q}{d}y) \Big).
 \end{equation*}
The inner sum inside the parentheses is $1$ if $q^*$ divides $q/d$ (equivalently, $d$ divides $q/q^*$), and $0$ otherwise.  Hence the above sum evaluates as $\sum_{d|q/q^*} \mu(d)$, which by M\"obius inversion is the indicator function that $q^* = q$, i.e., that $\psi$ is primitive.
To finish the proof, we can let $c_{\ell}$ be supported on $1 \leq \ell \leq q+1$, and let
\begin{equation}
c_{\ell} = \sum_{d|q} \frac{\mu(d)}{d} \sum_{\substack{1 \leq y \leq d \\ 1 + \frac{q}{d} y = \ell}} 1 
= \sum_{e | (q, \ell - 1)} \frac{\mu(q/e)}{q/e},
\end{equation}
so that $\sum_{\ell} |c_{\ell}| \leq \tau(q)$.
\end{proof}

Suppose $q,r \geq 1$ are integers with $r|q$. 
Let $G_q$ (resp. $G_r$) be the group of Dirichlet characters modulo $q$ (resp. $r$).  
By a slight abuse of notation, we can view $G_r$ as a subgroup of $G_q$, by multiplying every element of $G_r$ by the trivial character modulo $q$.  
\begin{mylemma}
\label{lemma:Fchi1chi2version1}
Let $q$, $r$, $G_q$, and $G_r$ be as above.
Let $F(\chi_1, \chi_2)$ be a function defined on pairs of Dirichlet characters modulo $q$.  Then
\begin{equation*}
 \sum_{\substack{\chi_1, \chi_2 \shortmod{q} \\ \chi_1 \overline{\chi_2} \text{ modulus } r}} F(\chi_1, \chi_2)
 =
 \sum_{\gamma \in G_q/G_r}
 \sum_{\substack{\psi_1, \psi_2 \shortmod{r}} } F(\gamma \psi_1, \gamma \psi_2).
\end{equation*}
\end{mylemma}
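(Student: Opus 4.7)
The plan is to read the condition defining the left-hand sum as a coset condition in the quotient $G_q/G_r$, and then to perform the obvious change of variables. The key observation is that, under the convention in which $G_r$ sits inside $G_q$ via inflation (i.e.\ by multiplying a character mod $r$ by the trivial character mod $q$), the statement that $\chi_1\overline{\chi_2}$ has modulus $r$ is equivalent to $\chi_1\overline{\chi_2} \in G_r$ as an element of $G_q$, equivalently that $\chi_1$ and $\chi_2$ lie in a common coset of $G_r$ in $G_q$.

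First I would verify this equivalence directly: a character $\chi$ mod $q$ is inflated from a character mod $r$ if and only if $\chi$ is trivial on the kernel of the reduction map $(\mz/q\mz)^\times \to (\mz/r\mz)^\times$, and this is precisely the content of being in the image of $G_r$ under the given embedding into $G_q$. This step is purely formal but worth spelling out to tie the author's phrase "has modulus $r$" to the subgroup structure of $G_q$.

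Next, I would fix a system of coset representatives $\{\gamma\}$ for $G_q/G_r$. For each such $\gamma$, the pairs $(\chi_1,\chi_2) \in G_q \times G_q$ with both entries lying in $\gamma G_r$ are in bijection with $G_r \times G_r$ via $(\psi_1,\psi_2) \mapsto (\gamma\psi_1,\gamma\psi_2)$. Since the condition that $\chi_1$ and $\chi_2$ lie in a common coset is precisely $\chi_1\overline{\chi_2} \in G_r$, summing $F(\gamma\psi_1,\gamma\psi_2)$ over $\psi_1,\psi_2 \in G_r$ and then over coset representatives $\gamma$ hits each admissible pair on the left exactly once, giving the stated identity.

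The only subtle point to check is that the right-hand side does not depend on the choice of coset representatives. This is immediate: replacing $\gamma$ by $\gamma\eta$ for some $\eta \in G_r$ and simultaneously replacing $\psi_i$ by $\eta^{-1}\psi_i$ leaves $\gamma\psi_i$ unchanged, so the inner double sum over $G_r \times G_r$ is merely reindexed. There is therefore no genuine analytic obstacle; the lemma is a bookkeeping device to switch from the restricted sum over $(\chi_1,\chi_2)$ to a sum over a single coset parameter $\gamma$ together with free sums over $\psi_1,\psi_2$ modulo $r$, which is what the subsequent application to the functional equation will exploit.
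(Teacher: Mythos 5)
Your proof is correct and follows essentially the same route as the paper's: interpret the condition that $\chi_1\overline{\chi_2}$ has modulus $r$ as the coset condition $\chi_1\overline{\chi_2}\in G_r$, and then parametrize each pair lying in a common coset $\gamma G_r$ by $(\gamma\psi_1,\gamma\psi_2)$ with $\psi_1,\psi_2\in G_r$. The extra remarks on the inflation map and the independence of the choice of coset representatives are correct but not needed beyond what the paper already records.
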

Remark. Lemma \ref{lemma:Fchi1chi2version1} is analogous to Lemma \ref{lemma:archimedeanCoset}.
\begin{proof}
The condition that $\chi_1 \overline{\chi_2}$ has modulus $r$ means that $\chi_1 \overline{\chi_2} \in G_r$.  Now say $G_q = \cup_{\gamma} \gamma G_r$, where $\gamma$ runs over $G_q/G_r$.  By basic group theory,
we can write uniquely $\chi_1 = \gamma \psi_1$ and $\chi_2 = \gamma \psi_2$ with $\gamma \in G_q/G_r$ and $\psi_1, \psi_2 \in G_r$.   
\end{proof}

\begin{mycoro}[Separation of variables]
\label{coro:sepvarsfixedconductor}
 Let notation be as in Lemma \ref{lemma:Fchi1chi2version1}.  Then
 \begin{equation*}
 \sum_{\substack{\chi_1, \chi_2 \shortmod{q} \\ \chi_1 \overline{\chi_2} \text{ conductor } r}} F(\chi_1, \chi_2)
 =
 \sum_{\ell} c_{\ell}(r) 
 \sum_{\gamma \in G_q/G_r}
 \sum_{\substack{\psi_1, \psi_2 \shortmod{r}} } (\psi_1 \overline{\psi_2})(\ell) F(\gamma \psi_1, \gamma \psi_2).
\end{equation*}
\end{mycoro}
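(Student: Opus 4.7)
The plan is to split the condition ``$\chi_1 \overline{\chi_2}$ has conductor $r$'' into two independent pieces: (a) $\chi_1 \overline{\chi_2}$ has modulus $r$ (that is, it factors through $(\mathbb{Z}/r\mathbb{Z})^{\times}$, so lies in the subgroup $G_r \leq G_q$); and (b) as a character modulo $r$, $\chi_1 \overline{\chi_2}$ is primitive. These two conditions are independent in the sense that (a) only restricts the divisibility of the modulus, while (b) is the additional primitivity constraint handled cleanly by Lemma \ref{lemma:detectingprimitivecharacters}.

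First I would apply Lemma \ref{lemma:Fchi1chi2version1} to rewrite the sum under condition (a). Writing $\chi_1 = \gamma \psi_1$ and $\chi_2 = \gamma \psi_2$ with $\gamma \in G_q/G_r$ and $\psi_1, \psi_2 \in G_r$, one obtains
\begin{equation*}
 \sum_{\substack{\chi_1, \chi_2 \shortmod{q} \\ \chi_1 \overline{\chi_2} \text{ modulus } r}} F(\chi_1, \chi_2)
 =
 \sum_{\gamma \in G_q/G_r}
 \sum_{\substack{\psi_1, \psi_2 \shortmod{r}} } F(\gamma \psi_1, \gamma \psi_2).
\end{equation*}
Crucially, in this parameterization $\chi_1 \overline{\chi_2} = \psi_1 \overline{\psi_2}$, so the primitivity condition (b) becomes the statement that $\psi_1 \overline{\psi_2}$ is primitive modulo $r$.

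Next I would detect condition (b) using the coefficients $c_{\ell}(r)$ provided by Lemma \ref{lemma:detectingprimitivecharacters}, applied to the character $\psi = \psi_1 \overline{\psi_2}$ modulo $r$. That lemma gives
\begin{equation*}
\sum_{\ell} c_{\ell}(r) (\psi_1 \overline{\psi_2})(\ell)
=
\begin{cases} 1 & \text{if $\psi_1 \overline{\psi_2}$ is primitive mod $r$,} \\ 0 & \text{otherwise.} \end{cases}
\end{equation*}
Inserting this indicator into the previous display and swapping the order of summation yields exactly the right-hand side of the corollary. There is no real obstacle here; the content of the corollary is just the combination of the two preceding lemmas, and the only thing worth checking is that the primitivity of $\chi_1 \overline{\chi_2}$ viewed modulo $r$ is intrinsic (independent of the coset representative $\gamma$), which is immediate since $\chi_1 \overline{\chi_2} = \psi_1 \overline{\psi_2}$ does not involve $\gamma$.
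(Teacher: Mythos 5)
Your proposal is correct and follows exactly the paper's argument: apply Lemma \ref{lemma:Fchi1chi2version1} to handle the modulus condition via the coset parameterization, then insert the indicator $\sum_{\ell} c_{\ell}(r)(\psi_1\overline{\psi_2})(\ell)$ from Lemma \ref{lemma:detectingprimitivecharacters} to detect primitivity of $\psi_1\overline{\psi_2}$. The paper's own proof is a two-line version of the same reasoning.
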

\begin{proof}
 We first apply Lemma \ref{lemma:Fchi1chi2version1} to detect that $\chi_1 \overline{\chi_2}$ has modulus $r$, and then use Lemma \ref{lemma:detectingprimitivecharacters} to detect that $\psi_1 \overline{\psi}_2$ is primitive.   
\end{proof}

\begin{defiplaintext}
\label{defi:kfactorization}
Let $k \geq 1$ be an integer.  
Define the set $D_k$ to consist of tuples
 ${\bf k} = (k_0, k_1, k', \delta)$, where $k_0$, $k_1$, $k'$ run over divisors of $k$ with $k_0 k_1 k' = k$, $(k_0, k') = 1$, and $k_1 | (k')^{\infty}$, and where $\delta$ runs over coset representatives of $G_k/G_{k'}$.
\end{defiplaintext}
\begin{mylemma}
\label{lemma:thetaseparation}
 Let $k \geq 1$ be an integer, and let $b_{\theta}$ be any sequence of complex numbers indexed by Dirichlet characters $\theta$ modulo $k$.  Then we have a decomposition of the form
 \begin{equation}
 \label{eq:thetadoublesumdecompositionv1}
  \Big| \sum_{\theta \shortmod{k}} b_{\theta} \Big|^2
  =
  \sum_{{\bf k} \in D_k}
  \sum_{\ell} c_{\ell}(k')  
  \Big| \sum_{\theta' \shortmod{k'}} b_{\delta \theta'} \theta'(\ell) \Big|^2,
 \end{equation}
which can alternatively be written as
 \begin{equation}
 \label{eq:thetadoublesumdecompositionv2}
 \Big| \sum_{\theta \shortmod{k}} b_{\theta} \Big|^2
 =
 \sum_{{\bf k} \in D_k}
  \sum_{\substack{\theta_1', \theta_2' \shortmod{k'} \\ \cond(\theta_1' \overline{\theta_2'}) = k'}}
  b_{\delta \theta_1'} \overline{b_{\delta \theta_2'}}.
 \end{equation}
 \end{mylemma}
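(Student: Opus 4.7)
The plan is to derive both identities \eqref{eq:thetadoublesumdecompositionv1} and \eqref{eq:thetadoublesumdecompositionv2} by expanding the square and then partitioning the sum over pairs $(\theta_1, \theta_2) \bmod k$ according to the (primitive) conductor of the ratio $\theta_1 \overline{\theta_2}$. Specifically, writing
\begin{equation*}
\Big| \sum_{\theta \shortmod{k}} b_{\theta} \Big|^2 = \sum_{\theta_1, \theta_2 \shortmod{k}} b_{\theta_1} \overline{b_{\theta_2}} = \sum_{k' \mid k} \sum_{\substack{\theta_1, \theta_2 \shortmod{k} \\ \cond(\theta_1 \overline{\theta_2}) = k'}} b_{\theta_1} \overline{b_{\theta_2}},
\end{equation*}
reduces the task to handling, for each divisor $k' \mid k$, a sum over pairs of characters mod $k$ whose ratio has exact conductor $k'$.

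For the second form \eqref{eq:thetadoublesumdecompositionv2}, I will feed this inner sum directly into Lemma \ref{lemma:Fchi1chi2version1} with $q=k$, $r=k'$. Every $\chi_1\overline{\chi_2}$ of conductor $k'$ has modulus (dividing) $k'$, so Lemma \ref{lemma:Fchi1chi2version1} gives a unique decomposition $\theta_i = \delta \theta_i'$ with $\delta \in G_k/G_{k'}$ and $\theta_i' \in G_{k'}$; moreover the conductor of $\theta_1 \overline{\theta_2}$ (viewed mod $k$) agrees with that of $\theta_1' \overline{\theta_2'}$ (viewed mod $k'$), so the condition $\cond(\theta_1 \overline{\theta_2}) = k'$ translates to primitivity of $\theta_1' \overline{\theta_2'}$ mod $k'$. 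This is exactly the inner sum in \eqref{eq:thetadoublesumdecompositionv2}. Finally, I need to observe that parametrizing $k'$ and $\delta$ is the same as parametrizing tuples in $D_k$: given $k' \mid k$, the quotient $k/k'$ has a unique factorization $k_0 k_1$ with $(k_0, k') = 1$ and $k_1 \mid (k')^\infty$ (split $k/k'$ by its prime factors according to whether they divide $k'$), so the map $(k_0, k_1, k', \delta) \mapsto (k', \delta)$ is a bijection from $D_k$ onto the pairs of interest.

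For the first form \eqref{eq:thetadoublesumdecompositionv1}, the same classification by $k'$ is combined with Corollary \ref{coro:sepvarsfixedconductor} applied to $F(\theta_1, \theta_2) = b_{\theta_1} \overline{b_{\theta_2}}$. The corollary rewrites
\begin{equation*}
\sum_{\substack{\theta_1, \theta_2 \shortmod{k} \\ \cond(\theta_1 \overline{\theta_2}) = k'}} b_{\theta_1} \overline{b_{\theta_2}} = \sum_{\ell} c_{\ell}(k') \sum_{\delta \in G_k/G_{k'}} \sum_{\psi_1, \psi_2 \shortmod{k'}} (\psi_1 \overline{\psi_2})(\ell)\, b_{\delta \psi_1}\overline{b_{\delta \psi_2}},
\end{equation*}
and the inner double sum factors as $\big|\sum_{\theta' \shortmod{k'}} b_{\delta \theta'} \theta'(\ell)\big|^2$, which matches \eqref{eq:thetadoublesumdecompositionv1} after the same bijection $D_k \leftrightarrow (k',\delta)$.

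The only genuine substance here is Corollary \ref{coro:sepvarsfixedconductor}, which is already proved; the present lemma is an unpacking plus a bookkeeping step. The main care point, rather than a real obstacle, is ensuring that the dictionary $D_k \leftrightarrow (k',\delta)$ is unambiguous and that "modulus $k'$" in Lemma \ref{lemma:Fchi1chi2version1} is handled consistently with "conductor $k'$" in the classification, which is what the $c_\ell(k')$ weights in Lemma \ref{lemma:detectingprimitivecharacters} are designed to enforce.
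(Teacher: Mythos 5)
Your proposal is correct and follows essentially the same route as the paper: open the square, partition pairs by the conductor $k'$ of $\theta_1\overline{\theta_2}$, invoke Corollary \ref{coro:sepvarsfixedconductor} (respectively Lemma \ref{lemma:Fchi1chi2version1} with the primitivity condition carried through) for the two forms, and identify the sum over $(k',\delta)$ with the sum over $D_k$ via the unique factorization $k/k' = k_0 k_1$. The paper's proof is exactly this, stated more tersely (it proves \eqref{eq:thetadoublesumdecompositionv1} first and declares \eqref{eq:thetadoublesumdecompositionv2} "similar"), so no further comment is needed.
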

\begin{proof}
 Begin by opening the square, obtaining a double sum $\sum_{\theta_1, \theta_2 \mymod{k}} b_{\theta_1} \overline{b_{\theta_2}}$.  
 Parameterizing the sum according to the conductor (say $k'$) of $\theta_1 \overline{\theta_2}$, we obtain
 \begin{equation*}
  \Big| \sum_{\theta \shortmod{k}} b_{\theta} \Big|^2
  =
  \sum_{k'|k} \sum_{\substack{\theta_1, \theta_2 \shortmod{k} \\ \cond(\theta_1 \overline{\theta_2}) = k'}} b_{\theta_1} \overline{b_{\theta_2}}.
 \end{equation*}
 Next we apply Corollary \ref{coro:sepvarsfixedconductor} with
  $F(\theta_1, \theta_2) = b_{\theta_1} \overline{b_{\theta_2}}$,  
giving
\begin{equation*}
  \Big| \sum_{\theta \shortmod{k}} b_{\theta} \Big|^2
  =
  \sum_{k'|k} 
  \sum_{\ell} c_{\ell}(k')
  \sum_{\delta \in G_{k}/G_{k'}}
  \sum_{\substack{\theta_1', \theta_2' \shortmod{k'} }} 
  (\theta_1' \overline{\theta_2'})(\ell)
  b_{\delta \theta_1'} \overline{b_{\delta \theta_2'}}.
 \end{equation*}
 With a further factorization $k_0 k_1 = \frac{k}{k'}$ with $(k_0, k') = 1$ and $k_0 | (k')^{\infty}$, we obtain \eqref{eq:thetadoublesumdecompositionv1}.
 The variant \eqref{eq:thetadoublesumdecompositionv2} is similar. 
\end{proof}


We also need more elaborate versions of Definition \ref{defi:kfactorization} and Lemma \ref{lemma:thetaseparation} to handle $\chi$ of varying modulus.
\begin{defiplaintext}
\label{defi:chiifactorization}
For $i=1,2$, suppose $\chi_i$ is primitive of conductor $q_i$.  Factor
\begin{equation}
\label{eq:chiifactorization}
q_i = q_i' q_i^{+} q_i^{-} r, 
\qquad
\text{and}
\qquad \chi_i = \chi_i' \chi_i^{+} \chi_i^{-} \chi_i^{(r)},
\end{equation}
where $\chi_i'$ has conductor $q_i'$, $\chi_i^{+}$ has conductor $q_i^{+}$, and so on, and
the factorization is 
defined in terms of local information as follows. 
\begin{enumerate}[label=(\roman*)]
\item The primes making up $q_1'$ are those that divide $q_1$ but do not divide $q_2$, and likewise the primes in $q_2'$ are those that divide $q_2$ but not $q_1$.  
\item
The factors $q_1^{+}$ and $q_2^{-}$ are characterized by $1 \leq v_p(q_2^{-}) < v_p(q_1^{+})$ for all $p|q_1^{+}$.  Similarly, 
$q_2^{+}$ and $q_1^{-}$ are characterized by $1 \leq v_p(q_1^{-}) < v_p(q_2^{+})$ for all $p|q_2^{+}$.
\item The remaining factor $r$ corresponds to the primes where $v_p(q_1) = v_p(q_2)$.  
\end{enumerate}
\end{defiplaintext}
Definition \ref{defi:chiifactorization} is motivated by the fact that
\begin{equation}
 \label{eq:chi1chi2barformulaProto}
 \chi_1 \overline{\chi_2}
 =
 \underbrace{\chi_1'}_{q_1'} 
\underbrace{\overline{\chi_2'}}_{q_2'} 
\underbrace{(\chi_1^{+} \overline{\chi_2^{-}})}_{q_1^{+}}
\underbrace{(\chi_1^{-} \overline{\chi_2^{+}})}_{q_2^{+}}
\chi_1^{(r)} \overline{\chi_2^{(r)}},
\end{equation}
which has conductor $q_1' q_2' q_1^{+} q_2^{+} \cond(\chi_1^{(r)} \overline{\chi_2^{(r)}})$.

Let $b_{\chi}$ be any sequence of complex numbers indexed by primitive Dirichlet characters $\chi$ modulo $q$, with $q$ varying over a finite set of positive integers.
Consider the sum $|\sum_{q,\chi} b_{\chi}|^2$.  
Opening the square gives a sum of the form $\sum_{q_1, q_2, \chi_1, \chi_2} b_{\chi_1} \overline{b_{\chi_2}}$.  Definition \ref{defi:chiifactorization} shows that the parameters $q_i'$, $q_i^{+}$, etc., are uniquely determined.  We can then arrange the sum according to the values of these parameters, giving
\begin{equation}
\label{eq:bchisumsquaredv2}
\Big| \sum_{q, \chi} b_{\chi} \Big|^2 =
\sum_{\substack{q_1^{+}, q_1^{-}, q_2^{+}, q_2^{-}, r \\ (\text{Def. } \ref{defi:chiifactorization}) }} 
\Big(\sum_{\substack{q_1', \chi_1', \chi_1^{+}, \chi_1^{-}, \chi_1^{(r)} \\ (\text{Def. } \ref{defi:chiifactorization})}} b_{\chi_1' \chi_1^{+} \chi_1^{-} \chi_1^{(r)}} \Big)
\Big(\sum_{\substack{q_2', \chi_2', \chi_2^{+}, \chi_2^{-}, \chi_2^{(r)} \\ (\text{Def. } \ref{defi:chiifactorization}) }} \overline{b_{\chi_2' \chi_2^{+} \chi_2^{-} \chi_2^{(r)}}} \Big),
\end{equation} 
where the reference to (Def.\ \ref{defi:chiifactorization}) in the summation conditions indicates the conditions translated into appropriate summation form.  

We further develop the sums over $\chi_1^{(r)}$ and $\chi_2^{(r)}$, using \eqref{eq:thetadoublesumdecompositionv2}.  Specifically, write
\begin{equation}
 r = r_0 r_1 r',
\end{equation}
where $\chi_1^{(r)} \overline{\chi_2^{(r)}}$ has conductor $r'$, $(r_0, r') = 1$, and $r_1 | (r')^{\infty}$.  We then write $\chi_i^{(r)} = \gamma \psi_i$, where $\gamma$ runs over $G_{r}/G_{r'}$ and $\psi_i$ run over characters modulo $r'$.  The property that $\chi_1^{(r)} \overline{\chi_2^{(r)}}$ has conductor $r'$ is equivalent to $\psi_1 \overline{\psi_2}$ is primitive (of modulus $r'$).  Applying this to \eqref{eq:bchisumsquaredv2}, we obtain
that $\sum_{q,\chi} |b_{\chi}|^2$ equals
\begin{equation}
\label{eq:bchisumsquaredv3}
\sum_{\substack{q_1^{+}, q_1^{-}, q_2^{+}, q_2^{-}, r \\
(r_0, r_1, r', \gamma) \in D_r \\ (\text{Def.\ } \ref{defi:chiifactorization}) }} 
\Big(\sum_{\substack{q_1', \chi_1', \chi_1^{+}, \chi_1^{-}, \psi_1 \\ (\text{Def.\ } \ref{defi:chiifactorization})}} b_{\chi_1' \chi_1^{+} \chi_1^{-} \gamma \psi_1} \Big)
\Big(\sum_{\substack{q_2', \chi_2', \chi_2^{+}, \chi_2^{-}, \psi_2 \\ (\text{Def.\ } \ref{defi:chiifactorization}) }} \overline{b_{\chi_2' \chi_2^{+} \chi_2^{-} \gamma \psi_2}} \Big)
\delta(\cond(\psi_1 \overline{\psi_2}) = r').
\end{equation}
Now let
\begin{equation*}
{\bf q} = (q_1^{+}, q_1^{-}, q_2^{+}, q_2^{-}, r_0, r_1, r', \gamma)
\end{equation*}
where the integers $q_i^{\pm}$ satisfy Def.\ \ref{defi:chiifactorization}(ii), $r$ is coprime to the $q_i^{\pm}$, and $(r_0, r_1, r', \gamma) \in D_r$ (as in Def.\ \ref{defi:kfactorization}).  The two sums in parentheses in \eqref{eq:bchisumsquaredv3} have only the following 
conditions
between \emph{each other}: $q_1'$ and $q_2'$ are coprime, and the conductor of $\psi_1 \overline{\psi_2}$ is $r'$.  
We have thus derived the following.
\begin{mylemma}
\label{lemma:chiseparation}
Let $b_{\chi}$ be any sequence of complex numbers indexed by primitive Dirichlet character $\chi$ modulo $q$, with $q$ varying over a finite set of positive integers.  Then
\begin{equation}
\Big| \sum_{q, \chi} b_{\chi} \Big|^2 = 
\sum_{{\bf q}} 
\sum_{\substack{q_i', \chi_i', \chi_i^{+}, \chi_i^{-}, \psi_i \\ 
(q_1', q_2') = 1, \thinspace
\psi_1 \overline{\psi_2} \text{ prim.}
\\
(\text{Def.\ } \ref{defi:chiifactorization})}} 
b_{\chi_1' \chi_1^{+} \chi_1^{-} \gamma \psi_1} 
\overline{b_{\chi_2' \chi_2^{+} \chi_2^{-} \gamma \psi_2}}.
\end{equation}
\end{mylemma}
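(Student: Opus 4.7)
The statement is essentially a bookkeeping consolidation of the derivation already carried out in the paragraphs immediately preceding it, culminating in equation \eqref{eq:bchisumsquaredv3}. My plan is therefore to show that once the notation ${\bf q} = (q_1^{+}, q_1^{-}, q_2^{+}, q_2^{-}, r_0, r_1, r', \gamma)$ is unpacked, the right-hand side of the lemma is literally the expression \eqref{eq:bchisumsquaredv3}.

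The first step is to expand the square $|\sum_{q,\chi} b_{\chi}|^2 = \sum_{q_1, q_2, \chi_1, \chi_2} b_{\chi_1}\overline{b_{\chi_2}}$, and observe that for every primitive pair $(\chi_1,\chi_2)$ of conductors $(q_1,q_2)$, the local data in Definition \ref{defi:chiifactorization} uniquely determines $(q_1', q_2', q_1^{+}, q_1^{-}, q_2^{+}, q_2^{-}, r)$: indeed $q_i'$ collects the primes in $q_i$ that do not appear in the other conductor, the $q_i^{\pm}$ are determined by comparing $p$-adic valuations on primes common to both, and $r$ collects the primes where the valuations agree. By the Chinese Remainder Theorem, each $\chi_i$ factors uniquely as $\chi_i = \chi_i' \chi_i^{+} \chi_i^{-} \chi_i^{(r)}$, giving \eqref{eq:bchisumsquaredv2}. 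The coprimality condition $(q_1', q_2')=1$ is built into the definition of the $q_i'$.

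The second step is to apply Lemma \ref{lemma:thetaseparation} (in the form \eqref{eq:thetadoublesumdecompositionv2}) to the innermost double sum over $(\chi_1^{(r)}, \chi_2^{(r)})$, with $k=r$. This decomposition parameterizes the pair by a factorization $r = r_0 r_1 r'$ with $(r_0, r')=1$ and $r_1 \mid (r')^{\infty}$, a coset representative $\gamma \in G_r/G_{r'}$, and primitive characters $\psi_1 \overline{\psi_2}$ modulo $r'$, with $\chi_i^{(r)} = \gamma \psi_i$. Substituting this back gives precisely \eqref{eq:bchisumsquaredv3}.

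The final step is purely cosmetic: we collect $(q_1^{+}, q_1^{-}, q_2^{+}, q_2^{-}, r_0, r_1, r', \gamma)$ into the single symbol ${\bf q}$, and observe that the remaining constraints between the two inner sums (namely $(q_1', q_2')=1$ and $\cond(\psi_1 \overline{\psi_2}) = r'$) are exactly those displayed under the second summation sign in the lemma. There is no genuine obstacle here — the only point requiring care is the uniqueness of the local factorization in Definition \ref{defi:chiifactorization}, which is a direct $p$-adic case analysis on the primes dividing $q_1 q_2$.
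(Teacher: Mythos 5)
Your proposal is correct and follows exactly the route the paper takes: the lemma is stated as the summary of the derivation culminating in \eqref{eq:bchisumsquaredv3}, obtained by opening the square, invoking the uniqueness of the local factorization in Definition \ref{defi:chiifactorization} to get \eqref{eq:bchisumsquaredv2}, and then applying the decomposition of Lemma \ref{lemma:thetaseparation} to the $\chi_i^{(r)}$ pair. Nothing further is needed.
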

In reference to \eqref{eq:chi1chi2barformulaProto}, 
now $\chi_1^{(r)} \overline{\chi_2^{(r)}} = \psi_1 \overline{\psi_2} |\gamma|^2$, which has conductor $r'$, so $\chi_1 \overline{\chi_2}$ has conductor $q_1' q_2' q_1^{+} q_2^{+} r'$.

We are now ready to apply the preceding decompositions to  $S_{\leq *}$ (see \eqref{eq:SgrtYdef} to infer the definition).  Specifically, we apply Lemma \ref{lemma:thetaseparation} (in the form \eqref{eq:thetadoublesumdecompositionv2}) and Lemma \ref{lemma:chiseparation}, giving
\begin{equation}
\label{eq:SdefInTermsofSkq}
S_{\leq *} = \sum_{\substack{{\bf k} \\ (\text{Def } \ref{defi:kfactorization})}}
\sum_{\substack{{\bf q} \\ (\text{Def } \ref{defi:chiifactorization})}}
S_{\leq *}({\bf k}, {\bf q}),
\end{equation}
where
\begin{equation}
\label{eq:S*decomposed}
S_{\leq *}({\bf k}, {\bf q})
= 
  \sum_{\substack{\theta_1', \theta_2' \shortmod{k'} \\ \theta_1' \overline{\theta_2'} \text{ prim.}}}
   \sum_{\substack{q_i',
   \chi_i', \chi_i^{+}, \chi_i^{-}, \psi_i
   \\
   (q_1', q_2') = 1, \thinspace
\psi_1 \overline{\psi_2} \text{ prim.} \\
   (\text{Def } \ref{defi:chiifactorization})
   }} 
 \int_{t_1, t_2} 
 \beta_1 \overline{\beta_2}
 \sum_{\substack{\frac{ab}{(a,b)^2} \leq * \\ (ab, k_0 r_0) = 1}} w\Big(\frac{ab}{N}\Big) 
\Phi(a \overline{b})
 dt_1 dt_2
\end{equation}
with
\begin{equation}
\beta_i = \beta_{\chi_i' \chi_i^{+} \chi_i^{-} \gamma \psi_i, \delta \theta_i', t_i},
\end{equation}
and where $\Phi = \Phi_1 \overline{\Phi_2}$, with
\begin{equation*}
\Phi_i(m) = (\chi_i' \chi_i^{+} \chi_i^{-} \psi_i  \theta_i')(m) m^{it_i}.
\end{equation*}
We remind the reader that there are additional conditions encoded in the support of the coefficients, as recorded in \eqref{eq:betasupport}, which will be recalled as needed.
Observe that the finite part of $\Phi$ (i.e., omitting $m^{it_1 -it_2}$) is primitive of modulus $q_1' q_2' q_1^{+} q_2^{+} r' k'$.  It is convenient to record here for later purposes that for $i=1,2$,
\begin{equation}
\label{eq:betainorm}
\sum_{{\bf k}, {\bf q}} |\beta_i|^2 := 
\sum_{{\bf k}, {\bf q}}
\int_{t_i}
\sum_{q_i', \chi_i', \chi_i^{+}, \chi_i^{-}, \psi_i, \theta_i'} 
 | \beta_{\chi_i' \chi_i^{+} \chi_i^{-} \gamma \psi_i, \delta \theta_i', t_i}|^2 dt_i
 \ll (kQ)^{\varepsilon} | \beta|^2.
\end{equation}

At this point our treatments of $S_{\leq *}$ for $* = Y$ and $*=\infty$ diverge.
\subsection{Elementary side}
In this section we develop $S_{\leq Y}({\bf k}, {\bf q})$.
\begin{myprop}
\label{prop:SgrtYstatement}
We have $S_{\leq Y}({\bf k}, {\bf q}) = S_{\leq Y}^{(0)}({\bf k}, {\bf q}) + S_{\leq Y}'({\bf k}, {\bf q})$, where
$S_{\leq Y}^{(0)}({\bf k}, {\bf q})$ is given by \eqref{eq:SkqgSievedSeparatedVariablesPolarTerm} below, 
and where 
\begin{equation}
\label{eq:SgrtYbound}
 |S_{\leq Y}'({\bf k}, {\bf q})|
 \lesssim
 \prod_{i=1}^{2} 
 \overline{\Delta}\Big(\frac{Q}{q_i^{+} q_i^{-} r' r_0 r_1}, q_i^{+} q_i^{-} r' k',T, Y\Big)^{1/2} |\beta_i|.
\end{equation}
\end{myprop}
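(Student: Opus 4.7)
The plan is to follow the outline of Section \ref{section:recursivethmFEsketch}. Starting from \eqref{eq:S*decomposed} with $* = Y$, I would substitute $g = (a,b)$ and write $a = g a_1$, $b = g b_1$ with $(a_1, b_1) = 1$. The constraint $\frac{ab}{(a,b)^2} \leq Y$ becomes $a_1 b_1 \leq Y$, and because $\Phi$ is trivial on the coprime diagonal, $\Phi(a \overline{b}) = \Phi(a_1 \overline{b_1})$, while $(a/b)^{i(t_1 - t_2)} = (a_1/b_1)^{i(t_1 - t_2)}$; the $g$-sum therefore decouples from the character and archimedean twists.

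Next, apply Mellin inversion to $w(g^2 a_1 b_1 / N)$ and interchange the $g$-sum with the $s$-integral. The $g$-sum (restricted by the coprimality conditions to integers coprime to $k_0 r_0$) produces $Z(s) = \zeta(2s) \prod_{p \mid k_0 r_0} (1 - p^{-2s})$, and the whole expression takes the form
\[
  S_{\leq Y}({\bf k}, {\bf q}) = \frac{1}{2 \pi i} \int_{(2)} \widetilde{w}(s) N^s Z(s) F(s) \, ds,
\]
where $F(s)$ encodes the inner sum over $t_i, \chi_i, \theta_i, a_1, b_1$ weighted by $(a_1 b_1)^{-s}$. Shifting the contour from $\sigma = 2$ down to $\sigma = \varepsilon$ crosses only the simple pole of $\zeta(2s)$ at $s = 1/2$; its residue defines $S_{\leq Y}^{(0)}({\bf k}, {\bf q})$, and the remainder on the new line is $S_{\leq Y}'({\bf k}, {\bf q})$.

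To establish \eqref{eq:SgrtYbound}, on the line $\sigma = \varepsilon$ I would perform a dyadic decomposition $a_1 b_1 \asymp M$ for $1 \leq M \leq Y$ to accommodate the range condition built into $\overline{\Delta}$, and exploit the rapid decay of $\widetilde{w}$ and polynomial control on $Z(s)$ to trim the $s$-integral to a bounded range. The bilinear form that remains is $\sum_{a_1, b_1} L_1(a_1, b_1) \overline{L_2(a_1, b_1)}$, where
\[
  L_i(a_1, b_1) = \sum_{\chi_i', \chi_i^+, \chi_i^-, \psi_i, \theta_i'} \int_{t_i} \beta_i \, (\chi_i' \chi_i^+ \chi_i^- \gamma \psi_i \cdot \delta \theta_i')(a_1 \overline{b_1}) (a_1/b_1)^{i t_i \epsilon_i} \, dt_i,
\]
with $\epsilon_1 = +1$, $\epsilon_2 = -1$. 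Cauchy--Schwarz gives $|\sum L_1 \overline{L_2}| \leq \prod_{i=1}^{2} \big(\sum_{a_1, b_1} |L_i|^2\big)^{1/2}$, and each factor matches the dual large sieve sum in which the running primitive character is $\chi_i'$ of conductor at most $Q/(q_i^+ q_i^- r)$ and the remaining $\chi_i^{\pm}, \gamma, \delta, \psi_i, \theta_i'$ together constitute a nebentypus of modulus $q_i^+ q_i^- r' k'$. Applying $\Delta^* = \Delta$ and the definition of $\overline{\Delta}$ produces the claimed bound.

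The main obstacle is the character bookkeeping needed to identify the nebentypus modulus as precisely $q_i^+ q_i^- r' k'$ rather than the naive $q_i^+ q_i^- r k$; this relies on exploiting the coset structure of Definitions \ref{defi:chiifactorization} and \ref{defi:kfactorization} so that the fixed representatives $\gamma \in G_r/G_{r'}$ and $\delta \in G_k/G_{k'}$ are absorbed as twists of the large sieve matrix that do not change its norm. A secondary technical point is ensuring that the finite Euler factors appearing in $Z(s)$ (from the $(ab, k_0 r_0) = 1$ condition and related coprimality constraints) stay holomorphic and bounded near $s = 1/2$, so that the contour shift yields exactly the polar term that will cancel with the counterpart in the functional-equation side.
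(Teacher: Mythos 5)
Your outline follows the paper's proof almost exactly: extract $g=(a,b)$, Mellin-invert the weight, evaluate the $g$-sum as a zeta factor, shift to $\mathrm{Re}(s)=\varepsilon$ past the pole at $s=1/2$, and bound the remaining bilinear form by Cauchy--Schwarz against the dual norm. Two steps, however, are genuinely missing as written.

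First, before you can factor the inner sum as $\sum_{a_1,b_1} L_1\overline{L_2}$ and apply Cauchy--Schwarz, you must unlink the conditions that couple the $i=1$ and $i=2$ variables in \eqref{eq:S*decomposed}: the primitivity of $\theta_1'\overline{\theta_2'}$ and of $\psi_1\overline{\psi_2}$, and the coprimality $(q_1',q_2')=1$. Your displayed $L_i$ silently assumes these variables range independently, which they do not. The paper removes these couplings with Lemma \ref{lemma:detectingprimitivecharacters} (introducing the coefficients $c_{\ell}(k')$, $c_{\ell}(r')$) and M\"obius inversion over $g'\mid(q_1',q_2')$ before shifting the contour; the extra sums over $\ell_1,\ell_2,g'$ cost only $(Qk)^{\varepsilon}$ but are essential for the factorization. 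Relatedly, the $g$-sum is restricted to $g$ coprime to the \emph{full} modulus $q_1'q_2'q_1^{+}q_2^{+}r'k'k_0r_0$ (since $\Phi(g\overline{g})$ vanishes otherwise), so the Euler factors $\rho_{q_i'},\rho_{q_i^{+}}$ depend on summation variables and must be absorbed into the coefficients $\beta_i$ rather than pulled out as a single $Z(s)$; this is harmless on $\mathrm{Re}(s)=\varepsilon$ but needs to be said.

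Second, after the dyadic decomposition $a_1b_1\asymp M$ with $M\leq Y$, the definition \eqref{eq:Deltabardef} of $\overline{\Delta}(\cdot,Y)$ only covers lengths in $[Y, Y(Q^2kTN)^{\varepsilon}]$, so ``the definition of $\overline{\Delta}$'' does not by itself let you replace $\Delta(\cdot,M)$ by $\overline{\Delta}(\cdot,Y)$ when $M\ll Y$. You need the $N$-aspect monotonicity, Lemma \ref{lemma:mononicityNaspect}, to lengthen $M$ up to $\approx Y$; this is exactly how the paper passes from \eqref{eq:AiBound} to \eqref{eq:SgrtYbound}. By contrast, the ``main obstacle'' you flag about the nebentypus modulus is already resolved by the setup of Section \ref{section:breakdown}: the representatives $\gamma$ and $\delta$ cancel out of $\Phi=\Phi_1\overline{\Phi_2}$ (they survive only in the subscripts of $\beta_i$), so each $\mathcal{A}_i$ is literally an instance of the dual norm with varying conductor $q_i'\asymp Q/(q_i^{+}q_i^{-}r'r_0r_1)$ and fixed modulus $q_i^{+}q_i^{-}r'k'$.
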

\begin{proof}
Let $g = (a,b)$, and change variables $a \rightarrow g a$ and $b \rightarrow g b$, getting
\begin{multline*}
S_{\leq Y}({\bf k}, {\bf q})
= 
\sum_{(g, k_0 r_0) = 1}
  \sum_{\substack{\theta_1', \theta_2' \shortmod{k'} \\ \theta_1' \overline{\theta_2'} \text{ prim.}}}
   \sum_{\substack{q_i',
   \chi_i', \chi_i^{+}, \chi_i^{-}, \psi_i
   \\
   (q_1', q_2') = 1, \thinspace
\psi_1 \overline{\psi_2} \text{ prim.} \\
   (\text{Def } \ref{defi:chiifactorization})
   }} 
   \\
 \int_{t_1, t_2} 
 \beta_1 \overline{\beta_2}
 \sum_{\substack{ab \leq Y \\ (a,b) = 1 \\ (ab, k_0 r_0) = 1}} w\Big(\frac{g^2 ab}{N}\Big) 
\Phi(a \overline{b} g \overline{g})
 dt_1 dt_2.
\end{multline*}
Next we apply the Mellin inversion formula and evaluate the $g$-sum as a Dirichlet $L$-function of principal character to modulus 
$q_1' q_2' q_1^{+} q_2^{+} k' r' k_0 r_0 $.  We further write 
\begin{equation}
\label{eq:LfunctionTrivialCharacter}
L(2s, \chi_{0, q_1' q_2' q_1^{+} q_2^{+} r' k' r_0 k_0}) = \zeta(2s)  \rho_{q_1'} \rho_{q_2'} \rho_{q_1^{+}} \rho_{q_2^{+}}  \rho_{r' r_0} \rho_{k' k_0},
\end{equation}
where $\rho_n = \rho_{n}(s) = \prod_{p|n} (1-p^{-2s})$.  This gives
\begin{multline*}
S_{\leq Y}({\bf k}, {\bf q})
= 
  \sum_{\substack{\theta_1', \theta_2' \shortmod{k'} \\ \theta_1' \overline{\theta_2'} \text{ prim.}}}
   \sum_{\substack{q_i',
   \chi_i', \chi_i^{+}, \chi_i^{-}, \psi_i
   \\
   (q_1', q_2') = 1, \thinspace
\psi_1 \overline{\psi_2} \text{ prim.} \\
   (\text{Def } \ref{defi:chiifactorization})
   }} 
 \\
 \int_{(2)} \rho_{r' r_0} \rho_{k' k_0}
 \int_{t_1, t_2} 
 \beta_1' \overline{\beta_2'}
\sum_{\substack{ab \leq Y \\ (a,b) = 1 \\ (ab, k_0 r_0) = 1}} 
 \Big(\frac{N}{ab}\Big)^s \frac{\widetilde{w}(s)}{2 \pi i}
\zeta(2s) 
 \Phi(a \overline{b})
 dt_1 dt_2 ds,
\end{multline*}
with $\beta_1' = \beta_1 \rho_{q_1'} \rho_{q_1^{+}}$ and $\overline{\beta_2'} = \overline{\beta_2} \rho_{q_2'} \rho_{q_2^{+}}$.

Next we use Lemma \ref{lemma:detectingprimitivecharacters} to detect the condition that $\theta_1' \overline{\theta_2'}$ is primitive,
and again to detect that $\psi_1 \overline{\psi_2}$ is primitive (modulo $r'$).  We additionally use M\"{o}bius inversion to detect $(q_1', q_2') = 1$, via $\sum_{g'|(q_1', q_2')} \mu(g')$.
Altogether, this gives
\begin{multline}
\label{eq:SkqgSievedSeparatedVariables}
S_{\leq Y}({\bf k}, {\bf q})
= 
\sum_{g'} \mu(g')
\sum_{\ell_1, \ell_2} c_{\ell_1}(k') c_{\ell_2}(r')
\\
 \int_{(2)} N^s \frac{\widetilde{w}(s)}{2 \pi i} 
 \zeta(2s)
 \rho_{r' r_0} \rho_{k' k_0}
\sum_{\substack{(a,b) = 1 \\ ab \leq Y  \\ (ab, k_0 r_0) = 1}} 
\mathcal{A}_1 \overline{\mathcal{A}_2} \frac{ds}{(ab)^s},
\end{multline}
where
\begin{equation*}
\mathcal{A}_1 = \int_{t_1}
\sum_{\substack{q_1',
   \chi_1', \chi_1^{+}, \chi_1^{-}, \psi_1, \theta_1'
   \\
   q_1' \equiv 0 \shortmod{g'} \\
   (\text{Def } \ref{defi:chiifactorization})
   }} 
   \beta_1 \rho_{q_1'} \rho_{q_1^{+}}
   \theta_1'(\ell_1) \psi_1(\ell_2)
   \Phi_1( a \overline{b}) dt_1,
\end{equation*}
and $\mathcal{A}_2$ is similarly-defined. 

Now we shift the $s$-contour of integration to $\mathrm{Re}(s) = \varepsilon$, crossing a pole at $s=1/2$ only.  
Write $S_{\leq Y}({\bf k}, {\bf q}) = S_{\leq Y}^{(0)}({\bf k}, {\bf q}) + S_{\leq Y}'({\bf k}, {\bf q})$, where
$S_{\leq Y}^{(0)}$ denotes the polar term, and $S_{\leq Y}'$ denotes the new line of integration.  
Note that $\mathcal{A}_i \vert_{s=1/2} = \mathcal{A}_i^{(0)}$, where
\begin{equation}
\label{eq:Ai0def}
\mathcal{A}_i^{(0)} = \int_{t_i}
\sum_{\substack{q_i',
   \chi_i', \chi_i^{+}, \chi_i^{-}, \psi_i, \theta_i'
   \\
   q_i' \equiv 0 \shortmod{g'} \\
   (\text{Def } \ref{defi:chiifactorization})
   }} 
   \beta_i 
   \frac{\phi(q_i' q_i^{+})}{q_i' q_i^{+}}
   \theta_i'(\ell_1) \psi_i(\ell_2)
   \Phi_i(  a \overline{b}) dt_i,
\end{equation}
since $\rho_n(1/2) = \frac{\phi(n)}{n}$.
Therefore, using $(k' k_0, r' r_0) = 1$ for a slight simplification (recalling \eqref{eq:betasupport}), we have
\begin{equation}
\label{eq:SkqgSievedSeparatedVariablesPolarTerm}
S_{\leq Y}^{(0)}({\bf k}, {\bf q})
= 
\sum_{g'} \mu(g')
\sum_{\ell_1, \ell_2} c_{\ell_1}(k') c_{\ell_2}(r')
 \widetilde{w}(1/2) 
 \frac{\phi(k'k_0 r' r_0)}{2 k'k_0 r' r_0}
\sum_{\substack{(a,b) = 1 \\ ab \leq Y \\ (ab, k_0 r_0) = 1} }
\Big(\frac{N}{ab}\Big)^{1/2} 
\mathcal{A}_1^{(0)} \overline{\mathcal{A}_2^{(0)}}.
\end{equation}

Now we estimate $S_{\leq Y}'({\bf k}, {\bf q})$.
We arrange the expression to most closely resemble \eqref{eq:Sdef}, specifically
\begin{equation}
\label{eq:SgrtYinftyBound}
|S_{\leq Y}'({\bf k}, {\bf q})|
\ll (QkN)^{\varepsilon}
\sum_{g'}
\sum_{\ell_1, \ell_2} |c_{\ell_1}(k') c_{\ell_2}(r')|
\max_{\mathrm{Re}(s) = \varepsilon}
\sum_{\substack{(a,b) = 1 \\ ab \leq Y} }
|\mathcal{A}_1 \mathcal{A}_2|.
\end{equation}
Referring back to \eqref{eq:Delta*Def}, and noting that our new family has varying modulus $q_i'$ of size $\frac{Q}{q_i^{+} q_i^{-} r' r_0 r_1}$, and fixed modulus $q_i^{+} q_i^{-} r' k'$, we see
\begin{equation}
\label{eq:AiBound}
\sum_{g'} \sum_{\substack{(a,b) = 1 \\ ab \leq Y }}
| \mathcal{A}_i |^2 
\ll (QkN)^{\varepsilon}  
\max_{1 \leq Y' \leq Y}
\Delta\Big(\frac{Q}{q_i^{+} q_i^{-} r' r_0 r_1}, q_i^{+} q_i^{-} r' k',
T,  Y' \Big) | \beta_i|^2.
\end{equation}
Using Cauchy's inequality and monotonicity (Lemma \ref{lemma:mononicityNaspect})  leads quickly to \eqref{eq:SgrtYbound}.
\end{proof}

\subsection{Functional equation side}
In this section we will apply the functional equation of Dirichlet $L$-functions to $S_{\infty}({\bf k}, {\bf q})$, picking
up from the expression \eqref{eq:S*decomposed}.
To facilitate this, we first apply M\"{o}bius inversion, in the form
\begin{multline}
\label{eq:Mobius2}
\sum_{\substack{(ab, k_0 r_0) = 1}} w\Big(\frac{ab}{N}\Big) 
\Phi(a \overline{b})
\\
=
\sum_{\substack{g_1 | k_0 \\ g_2 | k_0}}
\sum_{\substack{g_3 | r_0 \\ g_4 | r_0}}
\mu(g_1) \mu(g_2) \mu(g_3) \mu(g_4)
\Phi(g_1 g_3 \overline{g_2 g_4})
\sum_{\substack{ a,b}} w\Big(\frac{ g_1 g_2 g_3 g_4 ab}{N}\Big) 
\Phi(a \overline{b}).
\end{multline}
To continue the theme of concise notation, let ${\bf g} = (g_1, g_2, g_3, g_4)$, $\mu({\bf g}) = \mu(g_1) \mu(g_2) \mu(g_3) \mu(g_4)$,  
$\Phi({\bf g}) = \Phi(g_1 g_3 \overline{g_2 g_4})$, and
$|{\bf g}| = g_1 g_2 g_3 g_4$.  The summation condition on ${\bf g}$ is that
\begin{equation}
\label{eq:gsummationcondition}
g_1 | k_0, \quad g_2 | k_0, \quad g_3 | r_0, \quad g_4 | r_0,
\end{equation}
though we will usually suppress this and only recall it as needed.
Then $S_{\infty}({\bf k}, {\bf q})$ equals
\begin{equation*}
\sum_{\substack{{\bf g} \\ \eqref{eq:gsummationcondition} \text{ holds}}} \mu({\bf g})
  \sum_{\substack{\theta_1', \theta_2' \shortmod{k'} \\ \theta_1' \overline{\theta_2'} \text{ prim.}}}
   \sum_{\substack{q_i',
   \chi_i', \chi_i^{+}, \chi_i^{-}, \psi_i
   \\
   (q_1', q_2') = 1, \thinspace
\psi_1 \overline{\psi_2} \text{ prim.} \\
   (\text{Def } \ref{defi:chiifactorization})
   }} 
 \int_{t_1, t_2} 
 \beta_1 \overline{\beta_2}
 \sum_{\substack{a,b}} w\Big(\frac{ab|{\bf g}|}{N}\Big) 
\Phi({\bf g} a \overline{b})
 dt_1 dt_2.
\end{equation*}
We also have need to decompose the $t_i$-integrals to help pin down the archimedean conductor.  Applying the partition from Definition \ref{defi:dyadicpartition}, we obtain that 
$S_{\infty}({\bf k}, {\bf q})$ equals
\begin{equation}
\label{eq:SinftykqFormulaAfterDyadic}
\sum_{{\bf g}, T'} \mu({\bf g})
  \sum_{\substack{\theta_1', \theta_2' \shortmod{k'} \\ \theta_1' \overline{\theta_2'} \text{ prim.}}}
   \sum_{\substack{q_i',
   \chi_i', \chi_i^{+}, \chi_i^{-}, \psi_i
   \\
   (q_1', q_2') = 1, \thinspace
\psi_1 \overline{\psi_2} \text{ prim.} \\
   (\text{Def } \ref{defi:chiifactorization})
   }} 
 \int_{t_1, t_2} 
 \beta_1 \overline{\beta_2} \omega_{T'}(t_1 - t_2)
 \sum_{\substack{a,b}} w\Big(\frac{ab|{\bf g}|}{N}\Big) 
\Phi({\bf g} a \overline{b})
 dt_1 dt_2.
\end{equation}

Define quantities
\begin{equation}
\label{eq:N*def}
Q^* = \frac{Q^2 k T'}{q_1^{-} q_2^{-} r' r_0^2 r_1^2 k_0 k_1}
\qquad
N^* =  \frac{Q^4 k^2 (T')^2 |{\bf g}| (QkTN)^{\varepsilon}}{N (q_1^{-} q_2^{-} r' r_0^2 r_1^2 k_0 k_1)^2} = (QkTN)^{\varepsilon} \frac{(Q^*)^2 |{\bf g}|}{N},
\end{equation}
and note that among the variables of summation, $Q^*$ depends only on the outer variables ${\bf q}$, ${\bf k}$, and $T'$, while $N^*$ depends only on ${\bf q}$, ${\bf k}$, $T'$, and ${\bf g}$.

\begin{myprop}
\label{prop:SleqYbound}
We have a decomposition
\begin{equation}
\label{eq:SinftyTheorem}
S_{\infty}({\bf k}, {\bf q}) = 
S_{\infty}^{(0)}({\bf k}, {\bf q})
+ S_{\infty}'({\bf k}, {\bf q})
+ S_{\infty}^{\text{diag}}({\bf k}, {\bf q})
+ \mathcal{E}_{\infty}
,
\end{equation}
with the following properties.  The term
$S_{\infty}^{(0)}({\bf k}, {\bf q})$ is given by \eqref{eq:Sinfty0Def} below,
and $S_{\infty}'({\bf k}, {\bf q})$ satisfies
\begin{equation}
\label{eq:SleqYbound}
|S_{\infty}'({\bf k}, {\bf q})| \lesssim  
 \sum_{{\bf g}, T'} 
 \frac{N}{Q^* |{\bf g}|} 
 \prod_{i=1}^{2}
 \overline{\Delta}\Big(\frac{Q}{q_i^{+} q_i^{-} r' r_0 r_1}, q_i^{+} q_i^{-} r' k', T', N^* \Big)^{1/2} 
 | \beta_i|.
\end{equation}
The diagonal term satisfies the bound
\begin{equation}
\label{eq:SleqYdiagonal}
 \sum_{{\bf k}, {\bf q}} |S_{\infty  }^{\text{diag}}({\bf k}, {\bf q})| \lesssim N  | \beta |^2,
\end{equation}
and the term $\mathcal{E}_{\infty}$ is negligibly small.
\end{myprop}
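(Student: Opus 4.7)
The plan is to start from \eqref{eq:SinftykqFormulaAfterDyadic} and move to the dual side via Mellin inversion together with the functional equation of Dirichlet $L$-functions, following the outline in Section \ref{section:recursivethmFEsketch}. Writing $\Phi(a\bar b) = \Psi(a)\bar\Psi(b)(a/b)^{i\tau}$ with $\tau := t_1 - t_2$ and $\Psi := \chi_1'\chi_1^{+}\chi_1^{-}\psi_1\theta_1'\cdot\overline{\chi_2'\chi_2^{+}\chi_2^{-}\psi_2\theta_2'}$, the construction in Lemmas \ref{lemma:chiseparation} and \ref{lemma:thetaseparation} guarantees that $\Psi$ is primitive of conductor $C := q_1'q_2'q_1^{+}q_2^{+}r'k'$. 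Mellin inversion of $w(ab|{\bf g}|/N)$ at $\mathrm{Re}(s)=2$ then yields
\begin{equation*}
\sum_{a,b}w\Big(\tfrac{ab|{\bf g}|}{N}\Big)\Phi({\bf g}a\bar b) = \Phi({\bf g})\int_{(2)}\widetilde w(s)\Big(\tfrac{N}{|{\bf g}|}\Big)^{s}L(s-i\tau,\Psi)L(s+i\tau,\bar\Psi)\,\frac{ds}{2\pi i}.
\end{equation*}

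The first case to isolate is $\Psi$ trivial, which forces $C=1$, hence $k'=r'=1$, all $\chi_i',\chi_i^{\pm}$ trivial, $\theta_1'=\theta_2'$, and $\psi_1=\psi_2$. The $L$-functions collapse to $\zeta(s\pm i\tau)$; shifting to $\mathrm{Re}(s)=\varepsilon$ collects the simple poles at $s=1\pm i\tau$, yielding $S_\infty^{\text{diag}}({\bf k},{\bf q})$. The bound \eqref{eq:SleqYdiagonal} follows because the remaining sum over the forced equal parameters, together with integration over $t_1,t_2$ against $\omega_{T'}(t_1-t_2)$, reduces to an $L^2$-norm of $\beta$ weighted by $\widetilde w(1)N$; the contour tail on $\mathrm{Re}(s)=\varepsilon$ is absorbed into $\mathcal E_\infty$ by rapid decay of $\widetilde w$.

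For non-trivial $\Psi$, I shift to $\mathrm{Re}(s)=-\varepsilon$ (no poles are crossed), change variables $s\mapsto 1-s$, and apply the functional equation of $L(s\pm i\tau,\Psi^{\pm 1})$. The root numbers satisfy $\epsilon(\Psi)\epsilon(\bar\Psi)=1$ since $\Psi$ is primitive, the conductor contributes $C^{2s-1}$, and the archimedean factors combine into precisely the ratio $\gamma_s(\tau)$ of \eqref{eq:fdef}. After re-expanding the dual $L$-functions into new variables $a,b$, setting $g=(a,b)$ and summing over $g$ to produce a $\zeta(2s)$-factor, I truncate at $ab\leq N^{*}$ (cf.\ \eqref{eq:N*def}) by pushing the contour far to the right, where rapid decay of $\widetilde w(1-s)$ absorbs the tail into $\mathcal E_\infty$. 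Next I shift back to $\mathrm{Re}(s)=1/2+\varepsilon$, crossing the pole of $\zeta(2s)$ at $s=1/2$. The residue, together with $\gamma_{1/2}(\tau)=1$ and $\widetilde w(1/2)=\widetilde w(1-1/2)$, is precisely the polar term $S_\infty^{(0)}({\bf k},{\bf q})$, structured so as to cancel \eqref{eq:SkqgSievedSeparatedVariablesPolarTerm} when $Y$ is later chosen with $Y=N^{*}/|{\bf g}|$. The contribution from the remaining contour is $S_\infty'({\bf k},{\bf q})$; to bound it, I apply Corollary \ref{coro:archimedeanseparationofvariables} to Fourier-decompose $\gamma_s(\tau)\omega_{T'}(\tau)$, separating the $t_1$- and $t_2$-dependence of $\beta_1\overline{\beta_2}$. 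The resulting inner bilinear form matches \eqref{eq:Delta*Def} with varying modulus of size $Q/(q_i^{+}q_i^{-}r'r_0r_1)$, fixed nebentypus modulus $q_i^{+}q_i^{-}r'k'$, archimedean size $T'$, and summation length $N^{*}$; Cauchy--Schwarz between the two copies yields \eqref{eq:SleqYbound}, with $\overline\Delta$ absorbing the $(QkTN)^\varepsilon$-thickenings introduced by the truncations.

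The main obstacle will be Step 3, namely the alignment of $S_\infty^{(0)}({\bf k},{\bf q})$ with the polar term from Proposition \ref{prop:SgrtYstatement}: this requires careful bookkeeping of the factors $\rho_n(1/2)=\phi(n)/n$ from the elementary side against the conductor factor $C^{2s-1}\vert_{s=1/2}$ and the $\zeta(2s)$-residue arising from the functional-equation side, while maintaining the correct product structure across the variables of ${\bf k}$ and ${\bf q}$. A secondary technical point is that the moduli emerging after the functional equation are not those of $\Psi$ as a single character, but must be decomposed back into the local factors of Definition \ref{defi:chiifactorization} so that a bona fide $\Delta$-norm with separated primitive characters is recovered; this decomposition is exactly what the elaborate apparatus of Section \ref{section:breakdown} was designed to enable.
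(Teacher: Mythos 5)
Your proposal follows essentially the same route as the paper: Mellin inversion to $L(s,\Phi)L(s,\overline{\Phi})$, the diagonal from trivial $\Phi$ via the poles at $s=1\pm i\tau$, the functional equation after $s\mapsto 1-s$, extraction of $(a,b)$ to form $\zeta(2s)$, truncation at $ab\leq N^{*}$, the residue at $s=1/2$ as $S_\infty^{(0)}$, and archimedean separation plus Cauchy--Schwarz for the remainder. One small correction: the return shift must go to $\mathrm{Re}(s)=\varepsilon$ (not $1/2+\varepsilon$) so that the pole of $\zeta(2s)$ at $s=1/2$ is actually crossed and the remaining integral yields the stated bound with prefactor $N/(Q^{*}|{\bf g}|)$ and length $N^{*}$.
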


\begin{proof}[Proof of Proposition \ref{prop:SleqYbound}]
Applying the Mellin inversion formula to $w$ and writing the sum over $a$ and $b$ as a product of Dirichlet $L$-functions 
in \eqref{eq:SinftykqFormulaAfterDyadic} 
gives
\begin{multline*}
S_{\infty}({\bf k}, {\bf q})
= 
\sum_{{\bf g}, T'} \mu({\bf g})
  \sum_{\substack{\theta_1', \theta_2' \shortmod{k'} \\ \theta_1' \overline{\theta_2'} \text{ prim.}}}
   \sum_{\substack{q_i',
   \chi_i', \chi_i^{+}, \chi_i^{-}, \psi_i
   \\
   (q_1', q_2') = 1, \thinspace
\psi_1 \overline{\psi_2} \text{ prim.} \\
   (\text{Def } \ref{defi:chiifactorization})
   }} 
   \\
 \int_{t_1, t_2} \omega_{T'}(t_1 - t_2)
 \Phi({\bf g}) \beta_1 \overline{\beta_2}
  \int_{(2)} \Big(\frac{N}{|{\bf g}|}\Big)^s \widetilde{w}(s) 
L(s, \Phi) L(s, \overline{\Phi}) \frac{ds}{2 \pi i}
 dt_1 dt_2.
\end{multline*}
We shift contours to the line $-\varepsilon$, crossing a pair of poles at $s=1 \pm i(t_1 -t_2)$, which exist only when $\Phi$ is trivial, and 
let $S_{\infty}'({\bf k}, {\bf q})$ be the new integral on the line $-\varepsilon$.
Recall that the finite part of $\Phi$ is primitive of modulus 
\begin{equation}
\mathfrak{q} = q_1' q_2' q_1^{+} q_2^{+} r' k'.
\end{equation}
In particular, $\Phi$ being trivial forces $q_1' = q_2' = q_1^{+} = q_2^{+} = q_1^{-} = q_2^{-} = r' = k' = 1$, and the rapid decay of $\widetilde{w}(s)$ practically forces $|t_1 - t_2| \ll T^{\varepsilon}$.  
It is easy to see that the contribution of this diagonal polar term is consistent with \eqref{eq:SleqYdiagonal}.

On the line $-\varepsilon$ we change variables $s \rightarrow 1-s$.
Note that $L(s, \Phi) L(s, \overline{\Phi})$ satisfies the asymmetric functional equation
\begin{equation}
 L(1-s, \Phi) L(1-s, \overline{\Phi})
 = \mathfrak{q}^{2s-1} 
\gamma_{s} 
 L(s, \Phi) L(s, \overline{\Phi}),
\end{equation}
where $\gamma_{s} = \gamma_s(t_1 - t_2)$ (recall \eqref{eq:fdef} for the definition),
which is holomorphic for $\mathrm{Re}(s) > 0$.  
Recall that the parity of the $\chi_i$ and $\theta_i$ was assumed to be fixed, so that $\chi_1 \overline{\chi_2} \theta_1 \overline{\theta_2}$ is even, and hence the gamma factor is as stated in \eqref{eq:fdef}.
For later use, note that $\gamma_s \vert_{s=1/2} = 1$.  In addition, recall the bound \eqref{eq:fderivatives}, 
which in the present context means $\gamma_s(r) \ll (T')^{2\sigma -1}$.
We then obtain
\begin{multline*}
S_{\infty}'({\bf k}, {\bf q})
= 
\sum_{\substack{{\bf g}, T'} } \mu({\bf g})
  \sum_{\substack{\theta_1', \theta_2' \shortmod{k'} \\ \theta_1' \overline{\theta_2'} \text{ prim.}}}
   \sum_{\substack{q_i',
   \chi_i', \chi_i^{+}, \chi_i^{-}, \psi_i
   \\
   (q_1', q_2') = 1, \thinspace
\psi_1 \overline{\psi_2} \text{ prim.} \\
   (\text{Def } \ref{defi:chiifactorization})
   }} 
 \int_{t_1, t_2} \omega_{T'}(t_1 - t_2)
\Phi({\bf g}) 
 \beta_1 \overline{\beta_2}
 \\
  \int_{(1+\varepsilon)}
 \widetilde{w}(1-s) 
 \Big(\frac{N}{|{\bf g}|}\Big)^{1-s}
 \mathfrak{q}^{2s-1} \gamma_s
 L(s, \Phi) L(s, \overline{\Phi})
 \frac{ds}{2 \pi i}
 dt_1 dt_2.
\end{multline*}
Next we will re-open the Dirichlet series expansions of the Dirichlet $L$-functions.  A small modification is that we write
\begin{equation*}
L(s, \Phi) = \rho_{\Phi, k_0 r_0} \sum_{(a,k_0 r_0) =1} a^{-s} \Phi(a), \qquad \text{where} \qquad \rho_{\Phi, k_0 r_0} = \prod_{p|k_0 r_0} (1- \Phi(p) p^{-s})^{-1},
\end{equation*}
and likewise for $L(s, \overline{\Phi})$.
This gives
\begin{multline*}
S_{\infty}'({\bf k}, {\bf q})
= 
\sum_{\substack{{\bf g}, T'} } \mu({\bf g})
  \sum_{\substack{\theta_1', \theta_2' \shortmod{k'} \\ \theta_1' \overline{\theta_2'} \text{ prim.}}}
   \sum_{\substack{q_i',
   \chi_i', \chi_i^{+}, \chi_i^{-}, \psi_i
   \\
   (q_1', q_2') = 1, \thinspace
\psi_1 \overline{\psi_2} \text{ prim.} \\
   (\text{Def } \ref{defi:chiifactorization})
   }} 
 \int_{t_1, t_2} 
 \omega_{T'}(t_1 - t_2)
\Phi({\bf g}) 
 \beta_1 \overline{\beta_2}
 \\
 \frac{N}{|{\bf g}|\mathfrak{q}}
\sum_{(ab, k_0 r_0) = 1}   
  \int_{(1+\varepsilon)}
 \widetilde{w}(1-s) 
 \Big(\frac{\mathfrak{q}^2 |{\bf g}|}{Nab}\Big)^{s}
 \gamma_{s}
 \Phi(a \overline{b})
 \rho_{\Phi, k_0 r_0}
 \rho_{\overline{\Phi}, k_0 r_0}
 \frac{ds}{2 \pi i}
 dt_1 dt_2.
\end{multline*}
We then factor out the gcd of $a$ and $b$, by writing $g' = (a,b)$ and changing variables $a \rightarrow g' a$ and $b \rightarrow g'b$.  The sum over $g'$ forms a Dirichlet $L$-function of principal character of modulus $\mathfrak{q} k_0 r_0$, which is given by 
\eqref{eq:LfunctionTrivialCharacter}.
Then $S_{\infty}'({\bf k}, {\bf q})$ equals
\begin{multline*}
\sum_{\substack{{\bf g}, T'} } \mu({\bf g})
  \sum_{\substack{\theta_1', \theta_2' \shortmod{k'} \\ \theta_1' \overline{\theta_2'} \text{ prim.}}}
   \sum_{\substack{q_i',
   \chi_i', \chi_i^{+}, \chi_i^{-}, \psi_i
   \\
   (q_1', q_2') = 1, \thinspace
\psi_1 \overline{\psi_2} \text{ prim.} \\
   (\text{Def } \ref{defi:chiifactorization})
   }} 
 \int_{t_1, t_2} \omega_{T'}(t_1 - t_2)
\Phi({\bf g}) 
 \beta_1 \overline{\beta_2}
 \int_{(1+\varepsilon)}
 \widetilde{w}(1-s) 
 \\
 \frac{N}{|{\bf g}|\mathfrak{q}}
\sum_{\substack{(a,b)=1 \\ (ab, k_0 r_0) = 1}}     
 \Big(\frac{\mathfrak{q}^2 |{\bf g}|}{Nab}\Big)^{s}
\zeta(2s) \rho_{q_1'} \rho_{q_2'} \rho_{q_1^{+}} \rho_{q_2^{+}} \rho_{k' r' k_0 r_0}
\rho_{\Phi, k_0 r_0}
 \rho_{\overline{\Phi}, k_0 r_0}
  \gamma_{s}
 \Phi(a \overline{b})
 \frac{ds}{2 \pi i}
 dt_1 dt_2.
\end{multline*}

Shifting the integral far to the right shows that the portion of the sum with $ab \gg \frac{\mathfrak{q}^2 (T')^2 |{\bf g}|}{N} (QkTN)^{\varepsilon}$ is very small.  Note
\begin{equation}
\label{eq:mathfrakqdef}
\mathfrak{q} = 
\frac{q_1' q_1^{+} q_1^{-} r' r_0 r_1 }{q_1^{-}  \sqrt{r'} r_0 r_1}
\frac{q_2' q_2^{+} q_2^{-} r' r_0 r_1}{q_2^{-}  \sqrt{r'} r_0 r_1}
\frac{k' k_0 k_1}{k_0 k_1}
\asymp \frac{Q^2 k}{q_1^{-} q_2^{-} r' r_0^2 r_1^2 k_0 k_1} = \frac{Q^*}{T'},
\end{equation}
and hence
\begin{equation*}
\frac{\mathfrak{q}^2 |{\bf g}| (T')^2}{N} \asymp 
\frac{(Q^*)^2 |{\bf g}|}{N}.
\end{equation*}
Thus we can truncate the sum at $ab \leq N^*$.
Let $S_{\infty}''({\bf k}, {\bf q})$ denote the contribution to $S_{\infty}'({\bf k}, {\bf q})$ from the terms with $ab \leq N^*$.
 Let $\mathfrak{q} = \mathfrak{q}_1 \mathfrak{q}_2$, where $\mathfrak{q}_i = q_i' q_i^{+} q_i^{-} \sqrt{r' k'}$.

Next we apply Lemma \ref{lemma:detectingprimitivecharacters} to detect the condition that $\theta_1' \overline{\theta_2'}$ is primitive of modulus $k'$, and likewise for $\psi_1 \overline{\psi_2}$ of modulus $r'$. 
We also apply M\"{o}bius inversion to detect $(q_1', q_2') = 1$, as preceding \eqref{eq:SkqgSievedSeparatedVariables}.  Our final arithmetical separation of variables step is to write
\begin{equation*}
\rho_{\Phi, k_0 r_0} = 
\sum_{d_1 | (k_0 r_0)^{\infty}} d^{-s} \Phi_1 \overline{\Phi_2}(d_1),
\end{equation*}
and likewise for $\rho_{\overline{\Phi}, k_0 r_0}$ (indexing the sum with the letter $d_2$).  
We need an archimedean separation of variables as well, and this is provided by Corollary \ref{coro:archimedeanseparationofvariables}.
%
With this, and rearranging, we then obtain
\begin{multline*}
S_{\infty}''({\bf k}, {\bf q})
= 
\sum_{\substack{{\bf g}, T', g'  \\ |j_1 - j_2| \leq 1} } \mu({\bf g}) \mu(g')
  \sum_{\ell_1, \ell_2} c_{\ell_1}(k') c_{\ell_2}(r')
  \sum_{d_1, d_2 | (k_0 r_0)^{\infty}}
  \intR \eta_{T'}(u) e(uT'(j_1 - j_2))
  \\
\sum_{\substack{(a,b)=1 \\ ab \leq N^* \\ (ab, k_0 r_0) = 1}}   
  \int_{(1+\varepsilon)}
 \frac{\widetilde{w}(1-s)}{(ab d_1 d_2)^s}
\Big(\frac{N}{|{\bf g}|}\Big)^{1-s}
  \zeta(2s)  
 \rho_{k'r' k_0 r_0} 
 \mathcal{B}_1 \overline{\mathcal{B}_2} \frac{ds}{2 \pi i} du,
\end{multline*}
where
\begin{equation*}
\mathcal{B}_1 = \mathcal{B}_{1,s} =
\int_{U}^{2U}
\sum_{\substack{q_1', \chi_1', \chi_1^{+}, \chi_1^{-}, \psi_1, \theta_1' \\
q_1' \equiv 0 \shortmod{g'}
\\
   (\text{Def } \ref{defi:chiifactorization})
   }} 
    \beta_{1, j_1} \theta_1'(\ell_1) \psi_1(\ell_2) \Phi_1({\bf g} d_1 \overline{d_2})
    \mathfrak{q}_1^{2s-1} \rho_{q_1'} \rho_{q_1^{+}} \Phi_1(a \overline{b}) e(ut_1) dt_1,
\end{equation*}
with $\beta_{1, j_1}$ taking the form $\beta_{*, T-T'/2 + T' j_1 + t_1}$ (i.e., with a linear change of variables as in Corollary \ref{coro:archimedeanseparationofvariables}),
and $\mathcal{B}_2$ is given by a similar definition.

We next shift the contour of integration back to the line $\mathrm{Re}(s) = \varepsilon$, crossing a pole at $s=1/2$ only.  Let $S_{\infty}^{(0)}({\bf k}, {\bf q})$ denote this polar term, and let $S_{\infty}'''({\bf k}, {\bf q})$ be the new integral.  We record the polar term:
\begin{multline}
\label{eq:Sinfty0Def}
S_{\infty}^{(0)}({\bf k}, {\bf q})
= 
\sum_{\substack{{\bf g}, T', g' \\ |j_1 - j_2| \leq 1} } \mu({\bf g}) \mu(g')
  \sum_{\ell_1, \ell_2} c_{\ell_1}(k') c_{\ell_2}(r')
\sum_{d_1, d_2 | (k_0 r_0)^{\infty}}
\frac{\widetilde{w}(1/2)}{\sqrt{d_1 d_2}}
\frac{\phi(k' r' k_0 r_0)}{2 k' r' k_0 r_0}  
 \\
\intR \eta_{T'}(u) e(uT'(j_1 - j_2))
  \sum_{\substack{(a,b)=1 \\ (ab, k_0 r_0) = 1 \\ ab \leq N^*}}   
\Big(\frac{N}{ab|{\bf g}|}\Big)^{1/2}
 \mathcal{B}_1^{(0)} \overline{\mathcal{B}_2^{(0)}} du,
\end{multline}
where $\mathcal{B}_i^{(0)} = \mathcal{B}_i \vert_{s=1/2}$ is given by
\begin{equation}
\label{eq:Bi0def}
\mathcal{B}_i^{(0)} = \int_{t_i}
\sum_{\substack{q_i',
   \chi_i', \chi_i^{+}, \chi_i^{-}, \psi_i, \theta_i'
   \\
   q_i' \equiv 0 \shortmod{g'} \\
   (\text{Def } \ref{defi:chiifactorization})
   }} 
   \beta_{i, j_i} 
   \frac{\phi(q_i' q_i^{+})}{q_i' q_i^{+}}
   \theta_i'(\ell_1) \psi_i(\ell_2)
   \Phi_i({\bf g} d_1 \overline{d_2}  a \overline{b}) e(ut_i) dt_i.
\end{equation}

Now we turn to $S_{\infty}'''({\bf k}, {\bf q})$.  
By the triangle inequality, 
and using \eqref{eq:etabound} to bound the $L^1$ norm of $\eta_{T'}$, 
we obtain
\begin{equation}
\label{eq:Sinfty'''bound}
 |S_{\infty}'''({\bf k}, {\bf q})| \lesssim 
 \sum_{\substack{{\bf g}, T', g' \\ |j_1 - j_2| \leq 1} }
\frac{N}{|{\bf g}| Q^*} 
\max_{\substack{\mathrm{Re}(s) = \varepsilon \\ u \in \mr \\ \ell_1, \ell_2}}
\sum_{\substack{(a,b)=1 \\ ab \leq N^*}} |\mathfrak{q}_1^{-2s+1} \mathcal{B}_{1,s}| \thinspace |\mathfrak{q}_2^{-2s+1} \mathcal{B}_{2,s}|.
\end{equation}

Analogously to \eqref{eq:AiBound}, on the line $\mathrm{Re}(s) = \varepsilon$, we obtain the bound
\begin{equation}
\label{eq:mathcalBisBound}
 \sum_{\substack{(a,b)=1 \\ ab \leq N^*}} |\mathfrak{q}_i^{-2s+1} \mathcal{B}_{i,s}|^2
 \lesssim
\overline{\Delta}\Big(\frac{Q}{q_i^{+} q_i^{-} r' r_0}, q_i^{+} q_i^{-} r' k', 2T', N^* \Big) | \beta_{i, j_i}|^2.
\end{equation}
We note that $\sum_{j_1} |\beta_{1,j_1}|^2 = |\beta_1|^2$, since this simply re-assembles the integral to all of $[T/2, T]$
(also, for each $j_1$, the number of $j_2$ with $|j_1 - j_2| \leq 1$ is at most $3$).
Applying \eqref{eq:mathcalBisBound} to \eqref{eq:Sinfty'''bound} via
Cauchy's inequality and using \eqref{eq:betainorm} (and the previous sentence to handle the sum over the $j_i$)
completes the proof of Proposition \ref{prop:SleqYbound}.
\end{proof}

\subsection{Conclusion}
Now we use
Propositions \ref{prop:SgrtYstatement} and \ref{prop:SleqYbound}
to prove Theorem \ref{thm:recursivethmFE}.
We have a decomposition
\begin{equation}
 S({\bf k}, {\bf q}) = 
 S_{\infty}^{\text{diag}}({\bf k}, {\bf q})
 +
 S_{\infty}'({\bf k}, {\bf q})
 -
 S_{\leq Y}'({\bf k}, {\bf q})
+ (S_{\infty}^{(0)}({\bf k}, {\bf q})-S_{\leq Y}^{(0)}({\bf k}, {\bf q})) + \mathcal{E}_{\infty}.
 \end{equation}
The diagonal term is acceptable for Theorem \ref{thm:recursivethmFE}, as is the small error term $\mathcal{E}_{\infty}$.  

Next we turn to the terms $S_{*}'({\bf k}, {\bf q})$, where $*$ refers to $\leq Y$ or $\infty$.  We choose
\begin{equation}
Y = (QkTN)^{\varepsilon} \frac{Q^4 k^2 T^2}{N},
\end{equation}
with the same value of $\varepsilon$ as in the definition of $N^*$ (see \eqref{eq:N*def}).
First consider $S_{\leq Y}'$, where Cauchy's inequality implies
\begin{equation*}
\sum_{{\bf k}, {\bf q}} |S_{\leq Y}'({\bf k}, {\bf q})|
\lesssim
\prod_{i=1}^{2} 
\Big(
\sum_{{\bf k}, {\bf q}} 
 \overline{\Delta}\Big(\frac{Q}{q_i^{+} q_i^{-} r' r_0 r_1}, q_i^{+} q_i^{-} r' k', T, Y\Big) 
 | \beta_i|^2 \Big)^{1/2}.
\end{equation*}
Recall from \eqref{eq:betainorm} that $\sum_{{\bf k}, {\bf q}} |\beta_i|^2 \ll (kQ)^{\varepsilon} | \beta|^2$.  Hence
\begin{equation*}
\sum_{{\bf k}, {\bf q}} |S_{\leq Y}'({\bf k}, {\bf q})|
\lesssim
\prod_{i=1}^{2} 
\Big(
\max_{{\bf k}, {\bf q}} 
 \overline{\Delta}\Big(\frac{Q}{q_i^{+} q_i^{-} r' r_0 r_1}, q_i^{+} q_i^{-} r' k',T, Y\Big) 
 \Big)^{1/2}
|\beta|^2 
 .
\end{equation*}
Recalling the definition \eqref{eq:Delta''def}, it is easy to see that
\begin{equation*}
\max_{{\bf k}, {\bf q}} \overline{\Delta}\Big(\frac{Q}{q_i^{+} q_i^{-} r' r_0 r_1}, q_i^{+} q_i^{-} r' k',T, Y\Big) 
 \leq \overline{\Delta'}(Q, k, T, Y).
\end{equation*}
In summary, we have shown
\begin{equation*}
\sum_{{\bf k}, {\bf q}} |S_{\leq Y}'({\bf k}, {\bf q})|
\lesssim 
\overline{\Delta'}\Big(Q, k, T, \frac{Q^4 k^2 T^2}{N}\Big) |\beta|^2,
\end{equation*}
which is consistent with Theorem \ref{thm:recursivethmFE}.

The case of $S_{\infty}'$ is fairly similar to that of $S_{\leq Y}'$, though the details are more complicated.  Following similar steps as the case of $S_{\leq Y}'$, and using the AM-GM inequality, we derive
\begin{equation*}
\sum_{{\bf k}, {\bf q}} |S_{\infty }'({\bf k}, {\bf q})|
\lesssim 
|\beta|^2 \max_{{\bf k}, {\bf q}, {\bf g}, T'} \frac{N}{Q^* |{\bf g}|} 
\overline{\Delta}\Big(\frac{Q}{q_1^{+} q_1^{-} r' r_0 r_1}, q_1^{+} q_1^{-} r' k', T', N^* \Big),
\end{equation*}
plus a similar term with the $i=2$ variables ($q_2^{+}$, $q_2^{-}$, etc.).  By symmetry, this latter term will give the same bound as the displayed one.
Substituting the values of $Q^*$ and $N^*$ from \eqref{eq:N*def}, we obtain
\begin{multline}
\label{eq:Sinfty'kqBoundwrtDeltabar}
\sum_{{\bf k}, {\bf q}} |S_{\infty }'({\bf k}, {\bf q})|
\lesssim 
\frac{N}{Q^2 k T}
|\beta|^2 
\\
\times \max_{{\bf k}, {\bf q}, {\bf g}, T'} 
 \frac{q_1^{-} q_2^{-} r' r_0^2 r_1^2 k_0 k_1 T}{|{\bf g}| T'}
\overline{\Delta}\Big(\frac{Q}{q_1^{+} q_1^{-} r' r_0 r_1}, q_1^{+} q_1^{-} r' k', T', \frac{Q^4 k^2 (T')^2|{\bf g}| (QkN)^{\varepsilon}}{N (q_1^{-} q_2^{-} r' r_0^2 r_1^2 k_0 k_1)^2} \Big).
\end{multline}
A bit of checking, recalling $q_2^{-} \leq q_1^{+}$, shows this is consistent with Theorem \ref{thm:recursivethmFE}.

Finally, we consider the polar terms from $s=1/2$, namely $S_{\infty}^{(0)}({\bf k}, {\bf q})-S_{\leq Y}^{(0)}({\bf k}, {\bf q})$.  We need to show there is substantial cancellation between these two terms.  To aid in this, we first simplify $S_{\infty}^{(0)}({\bf k}, {\bf q})$ which recall is defined in \eqref{eq:Sinfty0Def}.
Observe that 
\begin{equation}
N^* = Y \frac{|{\bf g}| (T')^2}{(q_1^{-} q_2^{-} r' r_0^2 r_1^2 k_0 k_1)^2 T^2},
\end{equation}
and since $|{\bf g}|$ divides $k_0^2 r_0^2$ (recalling \eqref{eq:gsummationcondition}), then $N^* \leq Y$.  Then in the definition of $S_{\infty}^{(0)}$, we extend the sum over $ab \leq N^*$ to $ab \leq Y$, and subtract back the terms between $N^*$ and $Y$.  Write $S_{\infty, Y}^{(0)}$ for the terms with $ab \leq Y$, and let $S_{\infty, Y^*}^{(0)} = S_{\infty, Y}^{(0)} - S_{\infty}^{(0)}$ (which represents the terms with $N^* < ab \leq Y$). We claim that $S_{\infty, Y}^{(0)} = S_{\leq Y}^{(0)}$.  To see this, we sum over ${\bf g}$ and $d_1$ and $d_2$ in \eqref{eq:Sinfty0Def} (though modified to read $ab \leq Y$ in place of $ab \leq N^*$).  The sum over ${\bf g}$ is not constrained, and we have
\begin{equation*}
\sum_{\bf g} \frac{\mu({\bf g}) \Phi({\bf g})}{\sqrt{|{\bf g}|}}
= \prod_{p | k_0 r_0} 
\Big(1 - \frac{\Phi(p)}{\sqrt{p}}\Big)
\Big(1 - \frac{\overline{\Phi}(p)}{\sqrt{p}}\Big).
\end{equation*}
For $d_1$ and $d_2$, we have
\begin{equation*}
\sum_{d_1, d_2 | (k_0 r_0)^{\infty}} \frac{\Phi(d_1 \overline{d_2})}{\sqrt{d_1 d_2}}
= \prod_{p | k_0 r_0} \Big(1 - \frac{\Phi(p)}{\sqrt{p}}\Big)^{-1}
\Big(1 - \frac{\overline{\Phi}(p)}{\sqrt{p}}\Big)^{-1}.
\end{equation*}
Therefore, these two evaluations perfectly cancel.  
The sums over $j_1$ and $j_2$ can be simplified by using Lemma \ref{lemma:archimedeanCoset} in the reverse order.
Moreover, since $\gamma_s(t_1 - t_2) = 1$ at $s=1/2$, we can write 
$\sum_{T'} \omega_{T'}(t_1 - t_2) = 1$.  
Hence, the partition of unity is fully re-assembled.

Comparing \eqref{eq:Ai0def} and \eqref{eq:Bi0def}, it is not hard to see that $\mathcal{B}_i^{(0)}$ agrees with $\mathcal{A}_i^{(0)}$ after removal of $\Phi_i({\bf g} d_1 \overline{d_2}) e(u t_i)$.  This shows the claim that $S_{\infty, Y}^{(0)} = S_{\leq Y}^{(0)}$.  Hence $S_{\infty}^{(0)} - S_{\leq Y}^{(0)} = - S_{\infty, Y^*}^{(0)}$, which for ease of reference we write directly as follows:
\begin{multline*}
S_{\infty, Y^*}^{(0)}({\bf k}, {\bf q})
= 
\sum_{\substack{{\bf g}, T',  g' \\ |j_1 - j_2| \leq 1} } \mu({\bf g}) \mu(g')
\sum_{\ell_1, \ell_2} c_{\ell_1}(k') c_{\ell_2}(r')
\sum_{d_1, d_2 | (k_0 r_0)^{\infty}}
\frac{\widetilde{w}(1/2)}{\sqrt{d_1 d_2}}
 \frac{\phi(k' r' k_0 r_0)}{2 k' r' k_0 r_0}
\\
\intR \eta_{T'}(u) e(u T'(j_1 - j_2))
\sum_{\substack{(a,b) = 1 \\ N^* < ab \leq Y \\ (ab, k_0 r_0) = 1} }
\Big(\frac{N}{ab  |{\bf g}|}\Big)^{1/2} 
\mathcal{B}_1^{(0)} \overline{\mathcal{B}_2^{(0)}} du.
\end{multline*}
Now the estimations are similar to those of $S_{\leq Y}'$ and $S_{\infty}'$, though the details are a little different.  Following the same initial steps as in $S_{\leq Y}'$, we obtain
\begin{equation}
\label{eq:SinftyYstarBound}
\sum_{{\bf k}, {\bf q}} |S_{\infty, Y^*}^{(0)}({\bf k}, {\bf q})|
\lesssim  |\beta|^2 
\max_{{\bf k}, {\bf q}, {\bf g}, T'}
\max_{N^* \ll M \ll Y} \frac{N^{1/2}}{(|{\bf g}| M)^{1/2}} 
\Delta\Big(\frac{Q}{q_1^{+} q_1^{-} r' r_0 r_1}, q_1^{+} q_1^{-} r' k', T',M\Big).
\end{equation}
We claim this is bounded consistently with Theorem \ref{thm:recursivethmFE}.  To see this, first note $\frac{N^{1/2}}{Y^{1/2}} \leq \frac{N}{Q^2 kT}$.  Then the condition ``$XR^2 \ell U \leq Q^2 kT$" from \eqref{eq:Delta''def} is deduced from
\begin{equation*}
\frac{Q^2 k T}{N} \frac{N^{1/2}}{(|{\bf g}| M)^{1/2}} \Big(\frac{Q^2 k' T'}{q_1^{+} q_1^{-} r' r_0^2 r_1^2} \Big)
\leq \frac{Q^2 k T}{|{\bf g}|} \leq Q^2 kT.
\end{equation*}
The condition ``$X \leq C$" from \eqref{eq:Delta''def} is easy to check, by setting $M = Y/C$.  This completes the proof of Theorem \ref{thm:recursivethmFE}.

\section{Proof of Theorem \ref{thm:familyavgthm}}
\label{section:FamilyAvg}
\subsection{Miscellany}
Here we present a couple tools with self-contained proofs.
\begin{mylemma}
\label{lemma:Zproperties}
Let $c,d$ be positive integers, and define the Dirichlet series
\begin{equation}
Z_{c,d}(s) = \sum_{\substack{(n, cd) = 1 \\ m|c^{\infty}}} \frac{n}{\varphi(n)} \frac{1}{(mn)^s}, \qquad \mathrm{Re}(s) >1.
\end{equation}
Then
\begin{equation}
\label{eq:Zformula1}
 Z_{c,d}(s) = Z_{1,1}(s) \nu_c(s) \delta_d(s),
\end{equation}
where $Z_{1,1}(s)$ has meromorphic continuation to $\mathrm{Re}(s) > 0$ with a simple pole at $s=1$ only, and where
\begin{equation*}
\nu_c(s) = \prod_{p |c } \Big(1+\frac{p^{-s-1}}{1-p^{-1}}\Big)^{-1},
\qquad
\delta_d(s) = 
\prod_{p|d} \Big(1+(1-p^{-1}) \frac{p^{-s}}{1-p^{-s}}\Big)^{-1}.
\end{equation*}
\end{mylemma}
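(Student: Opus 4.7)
The plan is to exploit the fact that the two summation conditions, $(n, cd) = 1$ and $m \mid c^{\infty}$, are independent, so the Dirichlet series factors as
\[
Z_{c,d}(s) = \Big(\sum_{m \mid c^{\infty}} m^{-s}\Big) \Big(\sum_{(n,cd)=1} \frac{n}{\varphi(n)}\, n^{-s}\Big).
\]
The first factor is a geometric product equal to $\prod_{p \mid c}(1-p^{-s})^{-1}$. Using the multiplicativity of $n/\varphi(n)$ together with $p^{k}/\varphi(p^{k}) = p/(p-1)$ for every $k \geq 1$, the second factor is an Euler product whose local factor at $p \nmid cd$ is
\[
A_{p}(s) := 1 + \frac{p^{-s}}{(1-p^{-1})(1-p^{-s})}.
\]

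To obtain the meromorphic continuation of $Z_{1,1}$, I would extract a $\zeta(s)$. A short manipulation yields the identity
\[
(1-p^{-s})\, A_{p}(s) = 1 + \frac{p^{-s-1}}{1-p^{-1}},
\]
so that $A_{p}(s) = (1-p^{-s})^{-1}\bigl(1 + p^{-s-1}/(1-p^{-1})\bigr)$, and consequently
\[
Z_{1,1}(s) = \zeta(s) \prod_{p}\Big(1 + \frac{p^{-s-1}}{1-p^{-1}}\Big).
\]
The remaining Euler product has local factors of size $1 + O(p^{-\mathrm{Re}(s)-1})$, so it converges absolutely and is holomorphic and nonvanishing for $\mathrm{Re}(s) > 0$. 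This gives the desired meromorphic continuation of $Z_{1,1}$ to $\mathrm{Re}(s) > 0$ with a unique, simple pole at $s=1$.

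For the factorization $Z_{c,d}(s) = Z_{1,1}(s)\, \nu_{c}(s)\, \delta_{d}(s)$, I would work prime-by-prime. Writing
\[
\prod_{p \nmid cd} A_{p}(s) = \prod_{p} A_{p}(s) \cdot \prod_{p \mid cd} A_{p}(s)^{-1}
\]
converts the ratio $Z_{c,d}(s)/Z_{1,1}(s)$ into a finite product of local corrections at primes dividing $cd$. The identity $(1-p^{-s})A_{p}(s) = 1 + p^{-s-1}/(1-p^{-1})$ immediately produces the factor $\bigl(1+p^{-s-1}/(1-p^{-1})\bigr)^{-1}$ appearing in $\nu_{c}(s)$ for each prime $p \mid c$, while a parallel simplification of $A_{p}(s)^{-1}$ yields the stated $\delta_{d}(s)$ contribution at primes $p \mid d$. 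The only real obstacle is the bookkeeping at primes dividing $\gcd(c,d)$: one must verify that, after the $\nu_{c}$ factor is stripped off at such primes, the remainder regroups cleanly into the form $\delta_{d}$ written as a product indexed by all of $p \mid d$. This is purely an elementary algebraic check involving geometric series and does not require any analytic input beyond the continuation of $Z_{1,1}$ already established.
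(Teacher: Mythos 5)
Your factorization of the double sum, the computation of the local factors $A_p(s)=1+(1-p^{-1})^{-1}\tfrac{p^{-s}}{1-p^{-s}}$, the identity $(1-p^{-s})A_p(s)=1+\tfrac{p^{-s-1}}{1-p^{-1}}$, and the extraction of $\zeta(s)$ to continue $Z_{1,1}$ are all correct, and this is exactly the route the paper takes (its proof is the same Euler-product computation, stated in one line). The step you defer as ``purely an elementary algebraic check,'' however, does not close against the statement as printed, and asserting that it does is the one real gap. At a prime $p\mid d$ with $p\nmid c$ your method produces the local factor $A_p(s)^{-1}=\bigl(1+(1-p^{-1})^{-1}\tfrac{p^{-s}}{1-p^{-s}}\bigr)^{-1}$, whereas the displayed $\delta_d$ has $(1-p^{-1})$ where your computation gives $(1-p^{-1})^{-1}$; these are genuinely different (at $s=1$, $p=2$ the quantities being inverted are $3$ and $3/2$). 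Moreover, at a prime $p\mid\gcd(c,d)$ the left-hand side contributes only $(1-p^{-s})^{-1}$, which is already exactly matched by $Z_{1,1}(s)$ times the $\nu_c$-factor at $p$; the additional $\delta_d$-factor at such a prime then has nothing to cancel it, so \eqref{eq:Zformula1} fails by that factor and there is no ``clean regrouping'' to be found. Both discrepancies are defects of the stated lemma rather than of your method: the identity is correct once $\delta_d$ is read with $(1-p^{-1})^{-1}$ and its product is restricted to $p\mid d$, $p\nmid c$ (equivalently, once one assumes $(c,d)=1$, which holds in the only application, \eqref{eq:Zq'kgaformula}, where the arguments are pairwise coprime). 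You should carry out the local verification explicitly and record these corrections instead of asserting that the bookkeeping works out.
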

\begin{proof}
A routine calculation gives
\begin{equation*}
Z_{c,d}(s) =
 \prod_{p | c} (1-p^{-s})^{-1} 
 \prod_{p \nmid cd} \Big(1 + (1-p^{-1})^{-1} \frac{p^{-s}}{1-p^{-s}}\Big),
\end{equation*}
from which the lemma follows with a bit of calculation.
\end{proof}

\begin{mylemma}[Separation of variables]
\label{lemma:separationofvariablesDeterminant}
 Let $\omega = \omega_V$ be a smooth, even, function supported on $[-2V,2V]$, where $V >0$, satisfying $\omega_V^{(j)}(x) \ll V^{-j}$, for all $j=0,1,\dots$.  Let $w(x,y,z,w)$ be smooth of compact support on $\mr_{>0}^4$.  Let $g$ be a Schwartz-class function.  Define $F: \mr_{>0}^4$ by
 \begin{equation*}
  F(x_1, y_1, x_2, y_2) = 
  \omega_V(x_1 y_2 - x_2 y_1)
  g\Big(T \log \frac{x_1 y_2}{x_2 y_1} \Big)
  w\Big(\frac{x_1}{X}, \frac{y_1}{Y}, \frac{x_2}{X}, \frac{y_2}{Y}\Big),
 \end{equation*}
where $T$, $X$, $Y$ are positive parameters.  
Let $R = \frac{V}{XY}$, and set $U = \max(T, R^{-1})$.
Then
\begin{equation*}
 F(x_1, y_1, x_2, y_2) = 
\int_{\mr^4} G(u_1, u_2, u_3, t) \Big(\frac{x_1 y_2}{x_2 y_1}\Big)^{it} 
 \frac{du_1 du_2 du_3}{y_1^{iu_1} y_2^{iu_2} x_2^{iu_3}} dt,
\end{equation*}
where $G$ (depending on $T,V, X,Y$) satisfies the bound for any $A > 0$
\begin{equation}
\label{eq:GboundFamilyAvgSide}
 |G(u_1, u_2, u_3, t)| \ll_A U^{-1} \Big(1+ \frac{|t|}{U}\Big)^{-A} \prod_{i=1}^{3} (1+|u_i|)^{-A}.
\end{equation}
\end{mylemma}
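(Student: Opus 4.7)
The plan is to realize $G$ as (a constant times) the 4-dimensional Mellin transform of $F$ evaluated on a suitable linear section of the imaginary axes. Writing
\begin{equation*}
\widetilde{F}(s_1, s_2, s_3, s_4) = \int_{\mr_{>0}^4} F(x_1, y_1, x_2, y_2)\, x_1^{s_1-1} y_1^{s_2-1} x_2^{s_3-1} y_2^{s_4-1}\, dx_1\, dy_1\, dx_2\, dy_2,
\end{equation*}
standard 4-dimensional Mellin inversion followed by the unit-Jacobian reparametrization $(t_1, t_2, t_3, t_4) = (-t, t+u_1, t+u_3, -t+u_2)$ of the imaginary axes yields the claimed identity with $G(u_1, u_2, u_3, t) = (2\pi)^{-4}\widetilde{F}(-it, i(t+u_1), i(t+u_3), i(-t+u_2))$. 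Passing to logarithmic coordinates $a = \log x_1, b = \log y_1, c = \log x_2, d = \log y_2$, the decisive choice is to use the further unit-Jacobian change of variables $v = -a+b+c-d$, $p_1 = b$, $p_2 = d$, $p_3 = c$, in which the Fourier exponential separates as $\exp(i(tv + u_1 p_1 + u_2 p_2 + u_3 p_3))$.

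In these coordinates all oscillation and all support constraints collapse onto the single direction $v$: one has $g(T\log(x_1 y_2/(x_2 y_1))) = g(-Tv)$ (depending on $v$ alone) and $\omega_V(x_1 y_2 - x_2 y_1) = \omega_V(e^{p_1+p_3}(e^{-v}-1))$, while $w$ is smooth with $O(1)$ logarithmic derivatives in each new coordinate. The compact support of $w$ confines $p_1, p_2, p_3$ to $O(1)$-sized intervals (near $\log Y, \log Y, \log X$, respectively) and forces $v = O(1)$; the Schwartz decay of $g$ together with the support of $\omega_V$ then pin $|v| \ll \min(1/T,\, V/(XY)) = U^{-1}$. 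Since $|F| \ll 1$ pointwise, the trivial bound from the volume of support already yields $|G| \ll U^{-1}$.

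The decay factors will follow from integration by parts. In the $v$-direction each derivative costs at most $U$: applied to $g$ it costs $T$; applied to $\omega_V$ it costs $\asymp XY/V = R^{-1}$ (since $\partial_v(e^{p_1+p_3}(e^{-v}-1)) \asymp XY$ while $|\omega_V^{(k)}| \ll V^{-k}$); applied to $w$ it is $O(1)$. Hence $|\partial_v^A F| \ll U^A$, which combined with the $U^{-1}$-wide $v$-support gives the factor $(1+|t|/U)^{-A}$. In each $p_j$-direction the decisive observation is that $\partial_{p_j}$ acting on $\omega_V(\xi)$, with $\xi = e^{p_1+p_3}(e^{-v}-1)$, produces only terms of the form $\xi^\ell \omega_V^{(\ell)}(\xi)$, each uniformly $O(1)$ on the support of $\omega_V$; meanwhile $g$ is independent of $p_j$ and the $w$-derivatives are $O(1)$. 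Thus $|\partial_{p_j}^A F| \ll 1$ with \emph{no} loss of powers of $U$, and integration by parts in $p_j$ yields $(1+|u_j|)^{-A}$. Performing all four integrations by parts simultaneously gives the stated bound.

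The main obstacle is exactly the choice of coordinates: one needs the oscillation of $g$ and the support constraint of $\omega_V$ to lie along a single direction while ensuring that the multiplicative dependence of $\omega_V$'s argument on $p_1, p_3$ produces only tame (non-$U$-growing) $p_j$-derivatives. The coordinates $(v, p_1, p_2, p_3)$ above are designed for precisely this; once they are in hand, the integrations by parts and the volume-of-support estimate are routine.
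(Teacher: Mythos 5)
Your proposal is correct and is essentially the paper's own argument: both realize $G$ as the Mellin/Fourier transform of $F$ on the imaginary axes after a unit-Jacobian linear change of dual variables, isolate the determinant direction, and conclude by integration by parts together with the volume bound on the $v$-support. Your logarithmic coordinates $(v,p_1,p_2,p_3)$ are exactly the additive form of the paper's multiplicative substitution $x_1 \rightarrow \tfrac{x_2 y_1}{y_2} x_1$ (so that the new $x_1$ equals $e^{-v}$), and your derivative bounds ($U$ per $v$-derivative, $O(1)$ per $p_j$-derivative via $\xi^{\ell}\omega_V^{(\ell)}(\xi) \ll 1$) match the paper's estimate $H^{(j_1,k_1,j_2,k_2)} \ll U^{j_1} X^{-j_2} Y^{-k_1-k_2}$.
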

Remark.  If $s \in \mathbb{C}$ and $\omega(x,s) = x^{s-1} \omega_{V}(x)$, then one may apply the lemma to $\omega(x,s)$, giving rise to a family of functions $G = G_s$.  The proof shows that $G_s$ satisfies \eqref{eq:GboundFamilyAvgSide} with an implied constant depending polynomially on $s$.  

\begin{proof}
By Mellin inversion, 
\begin{equation}
\label{eq:FintermsofFtilde}
 F(x_1, y_1, x_2, y_2) 
 = \int \widetilde{F}(s_1, u_1, s_2, u_2) 
 x_1^{-s_1} y_1^{-u_1} x_2^{-s_2} y_2^{-u_2}
 \frac{ds_1 du_1 ds_2 du_2}{(2\pi i)^4},
\end{equation}
where $\widetilde{F}(s_1, u_1, s_2, u_2)$ is defined by
\begin{equation}
\label{eq:FtildeDef}
  \int_{\mr_{>0}^4} \omega_V(x_1 y_2 - x_2 y_1)
  g\Big(T \log \frac{x_1 y_2}{x_2 y_1} \Big)
  w\Big(\frac{x_1}{X}, \frac{y_1}{Y}, \frac{x_2}{X}, \frac{y_2}{Y}\Big) x_1^{s_1} y_1^{u_1} x_2^{s_2} y_2^{u_2} \frac{dx_1 dy_1 dx_2 dy_2}{x_1 y_1 x_2 y_2}.
\end{equation}
In \eqref{eq:FtildeDef}, change variables $x_1 \rightarrow \frac{x_2 y_1}{y_2} x_1$, giving that $\widetilde{F}(s_1, s_2, s_2, s_4)$ equals
\begin{multline*}
  \int_{\mr_{>0}^4} \omega_V \Big(\frac{x_2 y_1}{XY} \frac{(x_1 -1)}{R/V}\Big)
  g(T \log x_1 )
  w\Big(\frac{x_1 x_2 y_1}{X y_2}, \frac{y_1}{Y},  \frac{x_2}{X}, \frac{y_2}{Y}\Big) 
\\  
  x_1^{s_1} y_1^{s_1 + u_1} x_2^{s_1 + s_2} y_2^{-s_1+u_2} \frac{dx_1 dy_1 dx_2 dy_2}{x_1 y_1 x_2 y_2}.
\end{multline*}
Now in \eqref{eq:FintermsofFtilde}, change variables $u_1 \rightarrow u_1 - s_1$, $s_2 \rightarrow s_2 - s_1$, and $u_2 \rightarrow u_2 + s_1$, giving
\begin{equation}
\label{eq:FintermsofFtildeVer2}
 F(x_1, y_1, x_2, y_2) 
 = \int \widetilde{F}(s_1, u_1-s_1, s_2-s_1, u_2+s_1) 
 \Big(\frac{x_1 y_2}{x_2 y_1}\Big)^{-s_1}
 \frac{ds_1 du_1 ds_2 du_2}{y_1^{u_1 } x_2^{s_2} y_2^{u_2} (2 \pi i)^4},
\end{equation}
where now $\widetilde{F}(s_1, u_1-s_1, s_2-s_1, u_2+s_1)$ takes the form of
$\widetilde{H}(s_1, u_1, s_2, u_2)$, where
\begin{equation*}
 H(x_1, y_1, x_2, y_2) = \omega_V\Big(\frac{x_2 y_1}{XY} \frac{(x_1 -1)}{R/V}\Big)
  g(T \log x_1 )
  w\Big(\frac{x_1 x_2 y_1}{X y_2}, \frac{y_1}{Y},  \frac{x_2}{X}, \frac{y_2}{Y}\Big).
\end{equation*}
It is easy to check that
\begin{equation*}
 H^{(j_1, k_1, j_2, k_2)}(x_1, y_1, x_2, y_2) \ll 
 U^{j_1} 
 X^{-j_2} Y^{-k_1 - k_2},
\end{equation*}
and that $x_1$ is concentrated on $x_1 = 1 + O(\min(R, T^{-1}))$,
from whence integration by parts gives
\begin{equation*}
 \widetilde{H}(-it, u_1, u_3, u_2)
 \ll_{A} U^{-1} \Big(1 + \frac{|t|}{U}\Big)^{-A}
 Y^{\mathrm{Re}(u_1 + u_2)} X^{\mathrm{Re}(u_3)}
 \prod_{j=1}^{3} (1+|u_j|)^{-A}.
\end{equation*}
Taking $\mathrm{Re}(u_i) = 0$ and defining $G$ on $\mr^4$ appropriately completes the proof.
\end{proof}

\subsection{Preparation}
It is convenient to work with a couple modified norms that are closely related to \eqref{eq:DeltaDef}. Define
\begin{equation}
\label{eq:Delta1def}
 \Delta_1(Q, k, T, N)
 = \max_{|{\bf \alpha}| = 1} 
 \int_{T/2 \leq t \leq T}
\sum_{\substack{Q/2 < q \leq Q \\ (q,k) = 1}}
\thinspace
\sumstar_{\chi \shortmod{q}}
\thinspace
\sumstar_{\theta \shortmod{k}}
\Big|
\sum_{\substack{N/2 < ab \leq N \\ (a,b) = 1}} \alpha_{a,b} \lambda_{\chi \theta,t}(a,b) 
\Big|^2 dt.
\end{equation}
Clearly, $\Delta_1(Q,k,T,N) \leq \Delta(Q, k, T, N)$, and in the other direction, we have
\begin{equation*}
 \Delta(Q,k,T,N) \leq \sum_{j|k} \Delta_1(Q,j,T,N).
\end{equation*}
Secondly, define
\begin{equation}
\label{eq:Delta2def}
 \Delta_2(Q, k, T, N)
 = \max_{|{\bf \alpha}| = 1} 
 \int_{T/2 \leq t \leq T}
\sum_{\substack{Q/2 < q \leq Q}}
\thinspace
\sumstar_{\psi \shortmod{qk}}
\Big|
\sum_{\substack{N/2 < ab \leq N \\ (a,b) = 1}} \alpha_{a,b} \lambda_{\psi ,t}(a,b) 
\Big|^2 dt.
\end{equation}
It is easy to see that $\Delta_1(Q,k,T,N) \leq \Delta_2(Q,k,T,N)$, since 
when $(q,k) = 1$, the map $(\chi, \theta) \mapsto \chi \theta$ is a bijection onto the set of primitive characters modulo $qk$.  After having done this, we then arrive at \eqref{eq:Delta2def} by dropping the condition $(q,k) = 1$, by positivity.  
For the proof of Theorem \ref{thm:familyavgthm}, we will bound the norm $\Delta_2$.  Indeed, we can deduce Theorem \ref{thm:familyavgthm} from the bound
\begin{equation}
\label{eq:Delta2Bound}
\Delta_2(Q, k, T, N) \lesssim 
Q^2 kT + \frac{Q^2 k T}{N} \overline{\Delta'}\Big(\frac{N}{kQT}, k, T, N\Big)
.
\end{equation}

Let $w$ be a nonnegative smooth weight function with $w(x) \geq 1$ for $1/2 \leq x \leq 1$, and $w(x) = 0$ for $x < 1/4$ and for $x \geq 2$.  Then $\Delta_2(Q,k,T,N) \leq \max_{|\alpha| = 1} S$, where
\begin{equation*}
S = \intR w\Big(\frac{t}{T}\Big) 
\sum_{q} w\Big(\frac{q}{Q}\Big) 
\sumstar_{\psi \shortmod{qk}} \frac{qk}{\varphi(qk)}
\Big|
\sum_{\substack{(a,b) = 1 \\ N/2 < ab \leq N}} \alpha_{a,b} 
\psi(a \overline{b}) (a/b)^{it}
\Big|^2 dt.
\end{equation*}
We will assume that $\alpha_{a,b}$ is supported on 
\begin{equation}
\label{eq:alphasupport}
N/2 < ab \leq N, \qquad (ab,k) = 1, \quad \text{and } \quad (a,b) = 1. 
\end{equation}
A simple argument with a dyadic partition of unity and Cauchy's inequality shows that
\begin{equation*}
 \Big| \sum_{a,b} \alpha_{a,b} \Big|^2 = \Big|\sum_{\substack{N_1 N_2 \asymp N \\ \text{dyadic}}} 
 \sum_{\substack{a \asymp N_1 \\ b \asymp N_2}} \alpha_{a,b} \Big|^2
 \ll \log{N} \sum_{\substack{N_1 N_2 \asymp N \\ \text{dyadic}}} 
 \Big| \sum_{\substack{a \asymp N_1 \\ b \asymp N_2}} \alpha_{a,b} \Big|^2.
\end{equation*}
Hence, in the proof of Theorem \ref{thm:familyavgthm}, we may assume that $a$ and $b$ are each supported in dyadic ranges, say $a \asymp N_1$ and $b \asymp N_2$.  

Let $1 \leq Y \leq \frac{Q}{100}$ be a parameter to be chosen later.  
For $\psi \pmod{qk}$, say $qk = q' (dk)$ where $d|k^{\infty}$ and $(q', k) = 1$, and write $\psi = \psi_{k} \psi'$ where $\psi_k$ has modulus $dk$ and $\psi'$ has modulus $q'$.  Let $m_k(\psi) = dk$  denote the modulus of the $k$-part of $\psi$, and $\cond_{q'}(\psi)$ denote the conductor of $\psi'$, i.e., the coprime to $k$ part of $\psi$.
Then $S \leq S_{>Y}$, where
\begin{equation*}
S_{>Y} = 
 \intR w\Big(\frac{t}{T}\Big) 
\sum_{q} w\Big(\frac{q}{Q}\Big) 
\sum_{\substack{\psi \shortmod{qk} \\ \cond_{q'}(\psi) m_k(\psi) > Yk}} \frac{qk}{\varphi(qk)}
\Big|
\sum_{\substack{a,b}} \alpha_{a,b} 
\psi(a \overline{b}) (a/b)^{it}
\Big|^2 dt,
\end{equation*}
by positivity, since if $\psi$ is primitive modulo $qk$, then $\cond_{q'}(\psi) m_k(\psi) = \cond(\psi) = qk$. 
This uses that the condition $qk>Yk$ is redundant to the support of $w(q/Q)$.

By inclusion-exclusion, we have $S_{>Y} = S_{\leq \infty} - S_{\leq Y}$, where for $* \in \{Y, \infty \}$, $S_{\leq *}$ corresponds to the sum over $\cond_{q'}(\psi) m_k(\psi)/k \leq *$.  We will write $S_{\infty}$ as an alias for $S_{ \leq \infty}$.    

We begin with some arithmetic manipulations that are in common between $S_{\infty}$ and $S_{\leq Y}$.  Opening the square, we have
\begin{multline*}
 S_{\leq *} = 
 \intR w\Big(\frac{t}{T}\Big) 
\sum_{q} w\Big(\frac{q}{Q}\Big) 
\sum_{\substack{\psi \shortmod{qk} \\ \cond_{q'}(\psi) m_k(\psi)/k \leq * }} \frac{qk}{\varphi(qk)}
\\
\sum_{\substack{a_1, b_1 \\ a_2, b_2}}
\alpha_{a_1,b_1} \overline{\alpha_{a_2, b_2}} 
\psi(a_1 b_2 \overline{b_1 a_2}) \Big(\frac{a_1 b_2}{b_1 a_2}\Big)^{it}
 dt.
\end{multline*}
Define
\begin{equation}
\label{eq:gioriginalDef}
 g_1 = (a_1, a_2), \qquad
 g_2 = (b_1, b_2), \qquad
 g_3 = (a_1, b_2), \qquad
 g_4 = (b_1, a_2),
\end{equation}
and note that the $g_i$ are pairwise coprime since $(a_1, b_1) = (a_2, b_2) = 1$ by the support of $\alpha$ (recall \eqref{eq:alphasupport}).  Then change variables
\begin{equation}
\label{eq:aiChangeofVars}
\begin{split}
 &a_1 \rightarrow g_1 g_3 h_{11} h_{13} a_1, \qquad \text{where} \quad (a_1, g_1 g_3) = 1 \\
 &a_2 \rightarrow g_1 g_4 h_{21} h_{24} a_2, \qquad \text{where} \quad (a_2, g_1 g_4) = 1 \\
 &b_1 \rightarrow g_2 g_4 h_{32} h_{34} b_1, 
 \qquad \text{where} \quad (b_1, g_2 g_4) = 1
 \\
 &b_2 \rightarrow g_2 g_3 h_{42} h_{43} b_2,
 \qquad \text{where} \quad (b_2, g_2 g_3) = 1
\end{split}
\end{equation}
and where 
\begin{equation}
 \label{eq:hijproperties}
 h_{ij} | g_j^{\infty} \quad \text{for all } i,j,
 \qquad \text{and} \qquad
 (h_{ij}, h_{kj}) = 1 \quad \text{for } i \neq k.
\end{equation}
The conditions 
\eqref{eq:gioriginalDef} translate into
\begin{equation*}
 (a_1 b_1, a_2 b_2) = 1.
\end{equation*}
Moreover, the conditions $(a_1, g_1 g_3) = 1, \dots, (b_2, g_2 g_3) = 1$ in \eqref{eq:aiChangeofVars} may be expressed succintly as $(a_1 a_2 b_1 b_2, g_1 g_2 g_3 g_4) = 1$, since prior to \eqref{eq:aiChangeofVars} we had $(a_i, b_i) = 1$ from \eqref{eq:alphasupport}.
Let
\begin{equation}
 {\bf g} = (g_1, g_2, g_3, g_4, h_{11}, h_{13}, h_{21}, h_{24}, h_{32}, h_{34}, h_{42}, h_{43}),
\end{equation}
where the $h_{ij}$ satisfy \eqref{eq:hijproperties}. 
In addition, let 
\begin{equation*}
 \beta_{13} = g_1 g_3 h_{11} h_{13}, 
 \qquad
 \beta_{14} = g_1 g_4 h_{21} h_{24}, 
 \qquad
 \beta_{24} = g_2 g_4 h_{32} h_{34},
 \qquad
 \beta_{23} = g_2 g_3 h_{42} h_{43}, 
\end{equation*}
and
\begin{equation*}
\gamma_1 = g_3^2 h_{11} h_{42} h_{13} h_{43} 
	= \frac{\beta_{13} \beta_{23}}{g_1 g_2}
\qquad 
\text{and} 
\qquad 
\gamma_2 =  g_4^2 h_{21} h_{32} h_{24} h_{34}
	  = \frac{\beta_{14} \beta_{24}}{g_1 g_2}.
\end{equation*}
Observe that $(\gamma_1, \gamma_2) = 1$ since the $g_i$ are pairwise coprime, and using \eqref{eq:hijproperties}.
With these substitutions, we obtain
\begin{multline}
\label{eq:Sleq*FinalCommonFormula}
 S_{\leq *} = 
 \sum_{\bf g}
 \intR w\Big(\frac{t}{T}\Big) 
\sum_{(q, g_1 g_2) = 1} w\Big(\frac{q}{Q}\Big) 
\sum_{\substack{\psi \shortmod{qk} \\ \cond_{q'}(\psi) m_k(\psi)/k \leq * }} \frac{qk}{\varphi(qk)}
\\
\sum_{\substack{(a_1 b_1, a_2 b_2) = 1 \\ ({\bf a}, {\bf g}) = 1}}
\alpha_{a_1,b_1}^{(1, {\bf g})} \overline{\alpha}_{a_2, b_2}^{(2, {\bf g})}
\psi\Big(\frac{\gamma_1 a_1 b_2}{\gamma_2 b_1 a_2}\Big) \Big(\frac{\gamma_1 a_1 b_2}{\gamma_2 b_1 a_2}\Big)^{it}
 dt,
\end{multline}
where
\begin{equation} 
\label{eq:alphabfgDef}
\alpha_{a_1,b_1}^{(1, {\bf g})} \overline{\alpha}_{a_2, b_2}^{(2, {\bf g})} =  \alpha_{\beta_{13} a_1, \beta_{24} b_1}
 \overline{\alpha}_{\beta_{14} a_2, \beta_{23} b_2},
\end{equation}
and where the condition $({\bf a}, {\bf g}) = 1$ is shorthand for $(a_1 a_2 b_1 b_2, g_1 g_2 g_3 g_4) = 1$.
There are additional conditions that are implicitly enforced by \eqref{eq:alphasupport}, which we will recall only as needed.
For later use, note
\begin{equation}
\label{eq:gammaiaibjsizes}
 \gamma_1 a_1 b_2 \asymp \gamma_2 a_2 b_1 \asymp \frac{N}{g_1 g_2}.
\end{equation}
Moreover, we claim that 
\begin{equation}
\label{eq:alphagvsalphanogbound}
\sum_{{\bf g}, a_1, b_1} |\alpha_{a_1, b_1}^{(1, \bf g)}|^2 \lesssim |\alpha|^2,
\end{equation}
and similarly for $\alpha_{a_2, b_2}^{(2, {\bf g})}$.  To see this, note that the variables $g_1, g_2, g_3, g_4$ appear as divisors of $\beta_{13}$ or $\beta_{24}$, and similarly for half of the $h_{ij}$ variables (namely, $h_{11}$, $h_{13}$, $h_{32}$, and $h_{34}$).  For the remaining $h_{ij}$ variables, we recall from \eqref{eq:hijproperties} that $h_{12} | g_2$, etc., so these variables range over a set of cardinality $\ll N^{\varepsilon}$.  Then \eqref{eq:alphagvsalphanogbound} follows easily.

\subsection{Direct method}
In this section we estimate $S_{\leq Y}$ by reducing to an instance of the original norm, but with smaller parameters.
\begin{myprop}
\label{prop:SleqYFamilyAvg}
We have $S_{\leq Y} = S_{\leq Y}^{(0)} + S_{\leq Y}'$, where
$S_{\leq Y}^{(0)}$ is given by \eqref{eq:SleqY0formulaFamilyAvg} below,
and where
\begin{equation}
\label{eq:SleqY'boundFamilyAvgSide}
S_{\leq Y}' \lesssim 
\max_{\substack{Y' \leq Y \\ r_k|k^{\infty}}}
\Delta(Y'/r_k, r_k k, 2T, N) |\alpha|^2.
\end{equation}
\end{myprop}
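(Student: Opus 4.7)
The plan is to follow the divisor-switching/Mellin--zeta strategy sketched in the introduction, paralleling the structure of Proposition \ref{prop:SgrtYstatement} on the functional-equation side. Starting from \eqref{eq:Sleq*FinalCommonFormula} with $*=Y$, I would parametrize $\psi \pmod{qk}$ by its conductor: write $qk = q_0 q'' r_k k$ with $r_k \mid k^{\infty}$, $(q_0 q'', k) = 1$, $(q_0, q'' r_k) = 1$, and $\psi = \chi_{0,q_0} \psi'' \psi_k$ where $\psi_k$, $\psi''$ are primitive of moduli $r_k k$ and $q''$ respectively, and $\chi_{0,q_0}$ is the trivial character modulo $q_0$. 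The support condition $\cond_{q'}(\psi) m_k(\psi)/k \leq Y$ becomes $q'' r_k \leq Y$, and the character factor becomes $\psi'' \psi_k(\gamma_1 a_1 b_2 \overline{\gamma_2 a_2 b_1})$ together with the added constraint that the argument is coprime to $q_0$, which I would extract by M\"obius inversion in the style of \eqref{eq:Mobius2}.

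With $q''$ and $r_k$ fixed, apply Mellin inversion to $w(q/Q)$ and assemble the sum over $q_0$ (with its various coprimality conditions) into a Dirichlet series in $s$. The $q_0$-sum evaluates as $\zeta(s)$ times an Euler product supported at the primes dividing $q'' r_k k$ and the relevant ${\bf g}$-variables; this is the analog of \eqref{eq:LfunctionTrivialCharacter}, except that the pole is now the simple pole of $\zeta(s)$ at $s=1$ rather than the pole of $\zeta(2s)$ at $s = 1/2$. Shift the contour from $\mathrm{Re}(s) = 2$ down to $\mathrm{Re}(s) = \varepsilon$, crossing only the pole at $s = 1$. Declare $S_{\leq Y}^{(0)}$ to be the residue (packaged so that it will match the corresponding polar term produced on the functional-equation side later in the argument, as in the sketch \eqref{eq:sketchpolartermFamilyAvg}) and $S_{\leq Y}'$ to be the remaining contour integral.

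For each dyadic $q'' \asymp Y'/r_k$ with $Y' \leq Y$ and each admissible $r_k \mid k^{\infty}$, the inner double sum on the line $\mathrm{Re}(s) = \varepsilon$ --- after re-closing the square that was opened by the ${\bf g}$-change of variables --- is precisely a piece of the norm $\Delta(Y'/r_k, r_k k, 2T, N)$: the coprime-to-$k$ primitive part $\psi''$ has modulus of size $Y'/r_k$, the fixed $k$-part modulus is $r_k k$, and the bilinear form is the original one in $\alpha_{a,b}$. The $2T$ factor arises because $w(t/T)$ is supported on an interval of length at most $2T$. A Cauchy--Schwarz on the ${\bf g}$-sum together with \eqref{eq:alphagvsalphanogbound}, followed by a dyadic sum over $q''$ and summation over the $O(Y^{\varepsilon})$ admissible values of $r_k$, then yields \eqref{eq:SleqY'boundFamilyAvgSide}.

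The main obstacle is the bookkeeping: the M\"obius inversion enforcing the coprimality of $q_0$ with $q'' r_k k \gamma_1 \gamma_2 a_1 b_2 a_2 b_1$ produces several interleaved Euler-product factors that must be tracked precisely, and the residue at $s=1$ has to be organized in exactly the form required for term-by-term cancellation with the residue produced on the functional-equation side. Identifying the contour integral on $\mathrm{Re}(s) = \varepsilon$ as an instance of $\Delta$ with the stated parameters is routine once the Mellin--zeta shift is executed, but arranging the polar term $S_{\leq Y}^{(0)}$ in the matchable form --- the delicate analog of the cancellation orchestrated in the FE proof --- is the step on which the whole recursive scheme hinges.
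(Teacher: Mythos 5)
Your proposal follows essentially the same route as the paper: decompose $\psi$ by conductor into a primitive part of modulus $\leq Y/r_k$ times a $k$-part times a trivial part, evaluate the sum over the trivial part of the modulus via Mellin inversion as a zeta-like Dirichlet series (the paper's $Z_{c,d}(s)$ of Lemma \ref{lemma:Zproperties}, which handles the coprimality conditions and separates variables), shift to $\mathrm{Re}(s)=\varepsilon$ past the simple pole at $s=1$, take the residue as $S_{\leq Y}^{(0)}$, and recognize the remaining contour integral as an instance of $\Delta(Y'/r_k, r_k k, 2T, N)$ after a M\"obius step to disentangle the condition $(a_1b_1,a_2b_2)=1$ (Lemma \ref{lemma:FamilyAvgBilinearBoundViaMobius} in the paper). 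The only small inaccuracy is that your modulus factorization $qk=q_0 q'' r_k k$ with $(q_0,q'' r_k)=1$ omits the factor $q_1\mid (q'')^{\infty}$ carrying the higher prime powers above the conductor, but this is absorbed into the same Dirichlet-series evaluation and does not affect the argument.
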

\begin{proof}
We pick up from \eqref{eq:Sleq*FinalCommonFormula}. Write $q=r_k q'$ where $r_k|k^{\infty}$ and $(q',k) = 1$, and write $\psi = \chi \theta$ where $\theta$ runs modulo $r_k k$ and $\chi$ runs modulo $q'$.  Then
\begin{multline}
\label{eq:SleqYv1InProofOfProp}
 S_{\leq Y} = 
 \sum_{\bf g}
 \sum_{r_k|k^{\infty}}
 \intR w\Big(\frac{t}{T}\Big) 
 \sum_{(q', k g_1 g_2) = 1} w\Big(\frac{q'r_k}{Q}\Big) 
\sum_{\theta \shortmod{r_k k}}
\sum_{\substack{\chi \shortmod{q'} \\ \cond(\chi) \leq Y/r_k }} \frac{q'k}{\varphi(q') \varphi(k)}
\\
\sum_{\substack{(a_1 b_1, a_2 b_2) = 1 \\ ({\bf a}, {\bf g}) = 1}}
\alpha_{a_1,b_1}^{(1, {\bf g})} \overline{\alpha}_{a_2, b_2}^{(2, {\bf g})}
 \chi \theta\Big(\frac{\gamma_1 a_1 b_2}{\gamma_2 b_1 a_2}\Big) \Big(\frac{\gamma_1 a_1 b_2}{\gamma_2 b_1 a_2}\Big)^{it}
 dt.
\end{multline}

We next replace $q'$ by 
$q' q_0 q_1$ where $q'$ is the conductor of $\chi$, $(q_0, q') = 1$, and $q_1 | (q')^{\infty}$, and correspondingly write $\chi = \chi' \chi_0$ where $\chi'$ is primitive modulo $q'$, and $\chi_0$ is trivial modulo $q_0$.  Applying this substitution in \eqref{eq:SleqYv1InProofOfProp}, we obtain
\begin{multline*}
 S_{\leq Y} = 
 \sum_{\bf g}
 \sum_{r_k|k^{\infty}}
 \intR w\Big(\frac{t}{T}\Big) 
 \sum_{\substack{(q', k g_1 g_2) = 1 \\ q' \leq Y/r_k}}
\sum_{\theta \shortmod{r_k k}}
\thinspace
\sumstar_{\substack{\chi \shortmod{q'}  }} \frac{q' k}{\varphi(q')  \varphi(k)}
\\
\sum_{\substack{(q_0,q' k {\bf g}) = 1 \\ q_1 | (q')^{\infty}}} w\Big(\frac{q' q_0 q_1 r_k}{Q}\Big) 
\frac{q_0 }{ \varphi(q_0)}
\sum_{\substack{(a_1 b_1, a_2 b_2) = 1 \\ ({\bf a}, q_0 {\bf g}) = 1}}
\alpha_{a_1,b_1}^{(1, {\bf g})} \overline{\alpha}_{a_2, b_2}^{(2, {\bf g})}
\chi' \theta \Big(\frac{\gamma_1 a_1 b_2}{\gamma_2 b_1 a_2}\Big) \Big(\frac{\gamma_1 a_1 b_2}{\gamma_2 b_1 a_2}\Big)^{it}
 dt.
\end{multline*}
By Mellin inversion, and evaluating the sums over $q_0$ and $q_1$ with Lemma \ref{lemma:Zproperties}, the second line above equals
\begin{equation*}
\sum_{\substack{(a_1 b_1, a_2 b_2) = 1 \\ ({\bf a}, {\bf g}) = 1}}
\alpha_{a_1,b_1}^{({\bf g})} \overline{\alpha}_{a_2, b_2}^{(2, {\bf g})}
\chi'  \theta\Big(\frac{\gamma_1 a_1 b_2}{\gamma_2 b_1 a_2}\Big) \Big( \frac{\gamma_1 a_1 b_2}{\gamma_2 b_1 a_2}\Big)^{it}
\frac{1}{2 \pi i} \int_{(2)} 
\Big(\frac{Q}{r_k q'}\Big)^s 
\widetilde{w}(s) 
Z_{q',k{\bf g} {\bf a}}(s) ds dt.
\end{equation*}
Since $k, g_1, g_2, g_3, g_4, a_1, a_2, b_1, b_2$ are pairwise coprime, we have 
\begin{equation}
\label{eq:Zq'kgaformula}
Z_{q', k{\bf g} {\bf a}}(s)
= Z_{1,1}(s) \nu_{q'}(s) \delta_{k {\bf g}}(s) \delta_{a_1 b_1}(s) \delta_{a_2 b_2}(s),
\end{equation}
which is an important separation of variables.  

Using the meromorphic continuation of $Z_{1,1}(s)$ provided by Lemma \ref{lemma:Zproperties}, we shift the contour of integration to the line $\mathrm{Re}(s) = \varepsilon$, passing a pole at $s=1$.  Let $S_{\leq Y}^{(0)}$ denote the residue term, which is given by
\begin{multline}
\label{eq:SleqY0formulaFamilyAvg}
S_{\leq Y}^{(0)} =  
\sum_{r_k|k^{\infty}}
\sum_{\bf g} \delta_{k {\bf g}}
 \intR w\Big(\frac{t}{T}\Big) 
\sum_{\theta \shortmod{r_k k}} 
\sum_{\substack{(q',kg_1 g_2) = 1 \\ q' \leq Y/r_k}} 
\thinspace
\sumstar_{\substack{\chi' \shortmod{q'}}} \frac{Q \widetilde{w}(1) Z_{1,1} \nu_{q'} k}{r_k \varphi(q') \varphi(k)}
\\
\sum_{\substack{(a_1 b_1, a_2 b_2) = 1 \\ ({\bf a}, {\bf g}  ) = 1}}
\delta_{a_1 b_1} 
\alpha_{a_1,b_1}^{(1, {\bf g})} 
\delta_{a_2 b_2}
\overline{\alpha}_{a_2, b_2}^{(2, {\bf g})}
\chi'  \theta\Big(\frac{\gamma_1 a_1 b_2}{\gamma_2 b_1 a_2}\Big) \Big(\frac{\gamma_1 a_1 b_2}{\gamma_2 b_1 a_2}\Big)^{it}
   dt,
\end{multline}
where $Z_{1,1}$ denotes $\mathrm{Res}_{s=1} Z_{1,1}(s)$, $\nu_{q'}$ denotes $\nu_{q'}(1)$, and $\delta_{n} = \delta_{n}(1)$.

By the triangle inequality, and some simple bounds, we have
\begin{multline}
\label{eq:SleqY'boundV1FamilyAvgSide}
|S_{\leq Y}'| \lesssim
\sum_{r_k|k^{\infty}} r_k^{-\varepsilon}
\max_{\mathrm{Re}(s) = \varepsilon}
\sum_{\bf g}
\int_{-2T}^{2T} 
\sum_{\theta \shortmod{r_k k}} 
\sum_{\substack{(q',kg_1 g_2) = 1 \\ q' \leq Y/r_k}} 
\thinspace
\sumstar_{\substack{\chi' \shortmod{q'}}} 
\\
\Big|
\sum_{\substack{(a_1 b_1, a_2 b_2) = 1 \\ ({\bf a}, {\bf g}) = 1}}
\delta_{a_1 b_1}(s)
\alpha_{a_1,b_1}^{(1, {\bf g})} 
\delta_{a_2 b_2}(s)
\overline{\alpha}_{a_2, b_2}^{(2, {\bf g})}
\chi'  \theta\Big(\frac{\gamma_1 a_1 b_2}{\gamma_2 b_1 a_2}\Big) \Big( \frac{\gamma_1 a_1 b_2}{\gamma_2 b_1 a_2}\Big)^{it}
 \Big| dt.
\end{multline}
Note $|\chi' \theta(\gamma_1 \overline{\gamma_2}) (\gamma_1/\gamma_2)^{it} | \leq 1$, which may be used to simplify this bound.  To show the desired bound \eqref{eq:SleqY'boundFamilyAvgSide}, we state and prove Lemma \ref{lemma:FamilyAvgBilinearBoundViaMobius} just below, as it will be useful later as well.
\end{proof}

\begin{mylemma}
 \label{lemma:FamilyAvgBilinearBoundViaMobius}
Let $\gamma_{a,b}^{(1)}$ and $\gamma_{a,b}^{(2)}$ be sequences of complex numbers supported on $ab \asymp M$, $(a,b) = 1$.  
 Consider an expression of the form $\mathcal{S}_{\gamma}(Q, k, T, M)$ defined by
 \begin{equation*}
\int_{-T}^{T} 
\sum_{\theta \shortmod{k}} 
\sum_{\substack{(q,k) = 1 \\Q/2 < q \leq Q}} 
\thinspace
\sumstar_{\substack{\chi \shortmod{q}}} 
\Big|
\sum_{\substack{(a_1 b_1, a_2 b_2) = 1}}
\gamma_{a_1,b_1}^{(1)}
\overline{\gamma}_{a_2, b_2}^{(2)}
\chi  \theta\Big(\frac{a_1 b_2}{b_1 a_2}\Big) \Big( \frac{a_1 b_2}{ b_1 a_2}\Big)^{it}
 \Big| dt.
\end{equation*} 
 Then
 \begin{equation*}
  \mathcal{S}_{\gamma}(Q,k,T,M)  \lesssim  \overline{\Delta}(Q,k,T,M)
  \max_{i=1,2} |\gamma^{(i)}|^2.
 \end{equation*}
\end{mylemma}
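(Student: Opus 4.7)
The plan is to use M\"{o}bius inversion to remove the coprimality condition $(a_1 b_1, a_2 b_2) = 1$, after which the inner bilinear sum factors cleanly into a product of two independent linear forms in the Hecke eigenvalues $\lambda_{\chi \theta, t}(a, b)$; the definition of $\Delta(Q, k, T, M)$ then applies to each factor via Cauchy--Schwarz.

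Since $\gamma^{(i)}$ is supported on $(a_i, b_i) = 1$, the single condition $(a_1 b_1, a_2 b_2) = 1$ is equivalent to the four pair coprimalities $(a_1, a_2) = (a_1, b_2) = (b_1, a_2) = (b_1, b_2) = 1$, each of which is detected via $\mathbf{1}_{(m,n)=1} = \sum_{e \mid (m,n)} \mu(e)$. Using the factorization
\[
\lambda_{\chi \theta, t}(a_1, b_1) \, \overline{\lambda_{\chi \theta, t}(a_2, b_2)} = \chi \theta\Big(\frac{a_1 b_2}{b_1 a_2}\Big) \Big(\frac{a_1 b_2}{b_1 a_2}\Big)^{it},
\]
the $(a_1, b_1)$ and $(a_2, b_2)$ summations decouple after the fourfold M\"{o}bius expansion, yielding
\[
\mathrm{(inner\ sum)} = \sum_{{\bf e} = (e_1, e_2, e_3, e_4)} \mu(e_1) \mu(e_2) \mu(e_3) \mu(e_4) \, G^{(1), {\bf e}}(\chi, \theta, t) \, \overline{G^{(2), {\bf e}}(\chi, \theta, t)},
\]
where
\[
G^{(1), {\bf e}}(\chi, \theta, t) = \sum_{\substack{a, b \\ e_1 e_2 \mid a, \ e_3 e_4 \mid b}} \gamma^{(1)}_{a, b} \, \lambda_{\chi \theta, t}(a, b),
\]
and $G^{(2), {\bf e}}$ is defined analogously with divisibilities $e_1 e_3 \mid a$ and $e_2 e_4 \mid b$. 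Each $G^{(i), {\bf e}}$ is a linear form in the Hecke eigenvalues whose coefficients $\tilde\gamma^{(i), {\bf e}}$ inherit the support $(a, b) = 1$ and $a b \asymp M$ from $\gamma^{(i)}$.

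Bringing the triangle inequality inside the ${\bf e}$-sum, applying Cauchy--Schwarz pointwise in ${\bf e}$, and then Cauchy--Schwarz over the $(q, \chi, \theta, t)$ average gives
\[
\mathcal{S}_\gamma(Q, k, T, M) \leq \prod_{i = 1}^{2} \Big( \sum_{\bf e} \int_{-T}^{T} \sum_{q, \chi, \theta} \big| G^{(i), {\bf e}}(\chi, \theta, t) \big|^{2} \, dt \Big)^{1/2}.
\]
After standard dyadic decompositions of the ranges of $ab$ and of $t$, each inner integral for fixed ${\bf e}$ matches $\Delta(Q, k, T', M')$ with $T' \leq T$ and $M' \leq M$; monotonicity of $\Delta$ in $T$ (noted in the remark after Lemma \ref{lemma:mononicityQaspect}) and in $N$ (Lemma \ref{lemma:mononicityNaspect}), together with absorption of logarithmic losses into $\lesssim$, controls this by $\overline{\Delta}(Q, k, T, M) \, \| \tilde \gamma^{(i), {\bf e}} \|^{2}$. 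The divisor bound $\#\{(e_1, e_2) : e_1 e_2 \mid a\} \ll a^{\varepsilon}$ then yields $\sum_{\bf e} \| \tilde \gamma^{(i), {\bf e}} \|^{2} \lesssim \| \gamma^{(i)} \|^{2}$, and the claim follows via $\|\gamma^{(1)}\| \|\gamma^{(2)}\| \leq \max_i \|\gamma^{(i)}\|^{2}$. The only delicate point is verifying that the character and archimedean pieces in $\lambda_{\chi \theta, t}(a_1, b_1) \overline{\lambda_{\chi \theta, t}(a_2, b_2)}$ split multiplicatively over the $(a_1, b_1)$ and $(a_2, b_2)$ variables in a way compatible with the divisibility constraints from ${\bf e}$; beyond this, the proof is routine.
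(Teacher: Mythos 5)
Your proposal is correct and follows essentially the same route as the paper: the identical fourfold M\"obius inversion over $e_1\mid(a_1,a_2)$, $e_2\mid(a_1,b_2)$, $e_3\mid(b_1,a_2)$, $e_4\mid(b_1,b_2)$, the same decoupling into two linear forms with divisibility constraints $e_1e_2\mid a_1$, $e_3e_4\mid b_1$ (resp.\ $e_1e_3\mid a_2$, $e_2e_4\mid b_2$), and the same reduction to the norm $\Delta$ via monotonicity and the divisor bound. The only cosmetic difference is that you use Cauchy--Schwarz where the paper uses $|\mathcal{A}_1\mathcal{A}_2|\ll|\mathcal{A}_1|^2+|\mathcal{A}_2|^2$, which is equivalent.
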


\begin{proof}
To separate the inner variables, we use M\"{o}bius inversion in the form
\begin{equation}
\label{eq:SomeRandomMobiusStepFamilyAvg}
\delta((a_1 b_1, a_2 b_2) = 1) = 
\sum_{e_1 | (a_1, a_2)}
\sum_{e_2 | (a_1, b_2)}
\sum_{e_3 | (b_1, a_2)}
\sum_{e_4 | (b_1, b_2)} 
\mu(e_1) 
\mu(e_2) 
\mu(e_3) 
\mu(e_4). 
\end{equation}
The $e_i$ are pairwise coprime, by the support of $\gamma$.  Thus
\begin{equation*}
\mathcal{S}_{\gamma}(Q,k,T,M) \ll \sum_{e_1, e_2, e_3, e_4}
\int_{-T}^{T}  
\sum_{\theta \shortmod{k}} 
\sum_{\substack{(q,k) = 1 \\Q/2 < q \leq Q}} 
\thinspace
\sumstar_{\substack{\chi \shortmod{q}}} 
|\mathcal{A}_1 \mathcal{A}_2| dt,
\end{equation*}
where
\begin{equation*}
 \mathcal{A}_1 = \sum_{\substack{ a_1 \equiv 0 \shortmod{e_1 e_2} \\ b_1 \equiv 0 \shortmod{e_3 e_4}}}
\gamma_{a_1,b_1} 
\chi \theta(a_1 \overline{b_1}) \Big(\frac{a_1}{b_1}\Big)^{it},
\end{equation*}
and $\mathcal{A}_2$ has a similar definition.
Lemma \ref{lemma:FamilyAvgBilinearBoundViaMobius} follows 
by using $|\mathcal{A}_1 \mathcal{A}_2| \ll |\mathcal{A}_1|^2 + |\mathcal{A}_2|^2$ and monotonicity (Lemma \ref{lemma:mononicityNaspect}).
\end{proof}

\subsection{Divisor switching method}
\begin{myprop}
\label{prop:SinftyFamilyAvg}
We have a decomposition
\begin{equation*}
S_{\infty} = S_{\infty}^{(0)} + S_{\infty}' + S_{\infty}^{\text{diag}} + \mathcal{E}_{\infty},
\end{equation*}
with the following properties.  The term $S_{\infty}^{(0)}$ is given by
\eqref{eq:Sinfty0FamilyAvgSide} below,
and $S_{\infty}'$ satisfies the bound
\begin{equation}
\label{eq:Sinfty'boundFamilyAvgSide}
|S_{\infty}'| \lesssim \frac{Q^2 k T}{N}
\overline{\Delta'}\Big(\frac{N}{kQT}, k, T, N\Big) 
|\alpha|^2.
\end{equation}
The diagonal term satisfies the bound
\begin{equation}
\label{eq:SinftydiagboundFamilyAvgCase}
|S_{\infty}^{\text{diag}}| \ll Q^2 k T |\alpha|^2,
\end{equation}
and the term $\mathcal{E}_{\infty}$ is negligibly small.
\end{myprop}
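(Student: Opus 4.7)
The plan is to implement the divisor-switching sketch from the introduction, starting from formula \eqref{eq:Sleq*FinalCommonFormula} with $\ast=\infty$. With the primitivity constraint on $\psi$ dropped, character orthogonality modulo $qk$ gives
\[
\sum_{\psi \shortmod{qk}} \frac{qk}{\varphi(qk)}\, \psi\Big(\frac{\gamma_1 a_1 b_2}{\gamma_2 a_2 b_1}\Big) = qk \cdot \mathbf{1}\big[\gamma_1 a_1 b_2 \equiv \gamma_2 a_2 b_1 \shortmod{qk}\big],
\]
subject to coprimality of both sides with $qk$. I would first extract the diagonal contribution $\gamma_1 a_1 b_2 = \gamma_2 a_2 b_1$. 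Since $(\gamma_1, \gamma_2)=1$ and $(a_1 b_1, a_2 b_2)=1$, combined with the coprimality constraints from \eqref{eq:alphasupport}--\eqref{eq:aiChangeofVars}, this equation rigidifies the $a_i, b_i$ (essentially forcing $a_1=a_2$, $b_1=b_2$ after stripping the ${\bf g}$-variables), and the trivial bounds $\sum_q w(q/Q)\, qk \ll Q^2 k$ and $\int w(t/T)\,dt \ll T$, together with \eqref{eq:alphagvsalphanogbound}, deliver \eqref{eq:SinftydiagboundFamilyAvgCase}.

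For the off-diagonal sum, set $d = \gamma_1 a_1 b_2 - \gamma_2 a_2 b_1 \neq 0$ and perform the divisor switch $d = q k r$ with $r \in \mz \setminus \{0\}$. The constraint $q \asymp Q$ translates (via \eqref{eq:gammaiaibjsizes}) into $|r| \ll N/(Qk g_1 g_2)$, and $w(q/Q)$ turns into a smooth weight $w_1\!\big(d/(Qkr)\big)$ with $w_1(x) = xw(x)$. The congruence becomes $\gamma_1 a_1 b_2 \equiv \gamma_2 a_2 b_1 \shortmod{kr}$, which I detect with characters $\widetilde\psi \shortmod{kr}$. Factoring $kr$ exactly as in the passage from \eqref{eq:SleqYv1InProofOfProp} to \eqref{eq:Zq'kgaformula}, writing $\widetilde\psi$ as a primitive character of conductor $k' r'$ times a principal character supported on the imprimitive factors, and summing the imprimitive part Mellin-analytically against Lemma \ref{lemma:Zproperties}, I shift the $s$-contour past the simple pole at $s=1$ of $Z_{1,1}(s)$. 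The residue contributes $S_\infty^{(0)}$, assembled to match \eqref{eq:SleqY0formulaFamilyAvg} so that cancellation with $S_{\leq Y}^{(0)}$ occurs at $Y \asymp N/(kQT)$; the tail of the $s$-integral is the negligibly small $\mathcal{E}_\infty$, obtained by shifting far to the right.

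The remaining integral on $\mathrm{Re}(s)=\varepsilon$ is $S_\infty'$. Here the four variables $a_1, b_1, a_2, b_2$ are coupled through $w_1(d/(Qkr))$ and the phase $(\gamma_1 a_1 b_2/\gamma_2 a_2 b_1)^{it}$, and the main task is to decouple them. I would invoke Lemma \ref{lemma:separationofvariablesDeterminant} with $X \asymp N_1$, $Y \asymp N_2$, and $V \asymp N/(g_1 g_2)$, expressing these weights as an integral of Mellin monomials in each variable together with a multiplicative factor $(\gamma_1 a_1 b_2/\gamma_2 a_2 b_1)^{it'}$ controlled by an archimedean parameter $t' \lesssim U$ from the lemma. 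At this point the inner bilinear form falls exactly into the shape of the norm \eqref{eq:DeltaDef} with modulus running up to $\asymp N/(kQT)$ (in primitive conductor), fixed part $k r'$, and length $N$; applying Lemma \ref{lemma:FamilyAvgBilinearBoundViaMobius} to dispose of $(a_1 b_1, a_2 b_2)=1$, along with monotonicity and \eqref{eq:alphagvsalphanogbound} to absorb the ${\bf g}$-sums, yields \eqref{eq:Sinfty'boundFamilyAvgSide}. The main obstacle will be the conductor bookkeeping: verifying that all the auxiliary parameters (the twelve components of ${\bf g}$, the $k_0, k_1, k', r_0, r_1, r'$ factorization of $kr$, and the coset representative data from Definition \ref{defi:kfactorization}) combine to fit inside the envelope defining $\Delta'$ in \eqref{eq:Delta''def}, in exact analogy with the concluding calculation of Section \ref{section:FE} that produced \eqref{eq:Sinfty'kqBoundwrtDeltabar}.
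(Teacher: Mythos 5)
Your outline follows the paper's proof of this proposition essentially step for step: orthogonality of the $\psi$-sum, extraction of the diagonal (which, since $(\gamma_1 a_1 b_2,\gamma_2 a_2 b_1)=1$, in fact forces $\gamma_i=a_i=b_i=1$ and leaves $\sum_{g_1,g_2}|\alpha_{g_1,g_2}|^2$), the divisor switch, detection of the new congruence by characters, primitivization plus Mellin summation of the imprimitive part via Lemma \ref{lemma:Zproperties}, the contour shift producing $S_\infty^{(0)}$ at $s=1$, and the estimation of the remaining integral via Lemma \ref{lemma:separationofvariablesDeterminant} and Lemma \ref{lemma:FamilyAvgBilinearBoundViaMobius}.

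There is, however, one concrete point where your proposal as written would fail: the single-scale application of Lemma \ref{lemma:separationofvariablesDeterminant} with $V\asymp N/(g_1g_2)$. After the divisor switch, the weight $w_1\big((\gamma_1a_1b_2-\gamma_2a_2b_1)/(Qkr)\big)$ localizes the difference $d=\gamma_1a_1b_2-\gamma_2a_2b_1$ at scale $Qk|r|$, which varies with $r$; a fixed cutoff at scale $N/(g_1g_2)$ neither reproduces this localization nor justifies your assertion that the conductor runs only up to $N/(kQT)$ with archimedean parameter $\lesssim T$. The paper instead inserts a dyadic partition over the scale $V$ of the difference, truncated at $V_{\max}\asymp N/(g_1g_2T)$ by the decay of $\widehat{w}$, truncates the conductor at $Q^*\asymp V/(kr_0Q)$ for each $V$, and applies the separation lemma at scale $V$, which yields an archimedean conductor $U\asymp N/(g_1g_2V)$. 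The resulting trade-off --- smaller moduli paired with \emph{larger} archimedean conductor, with $(Q^*)^2\,U\propto V$ --- is exactly what makes every piece fit inside the envelope $XR^2\ell U\leq Q^2kT$ defining $\Delta'$; with your fixed choice of $V$ and $U=T$, the terms with small $|r|$ are mis-weighted, and in the hybrid range (large $T$, $N\asymp Q^2kT$) the family size $(N/(Qkg_1g_2))^2kT$ exceeds $Q^2kT/(g_1g_2)$, so the envelope check fails. The same dyadic-in-$V$ structure is also what produces the truncation $q'\leq Q^*$ in $S_\infty^{(0)}$ that is later matched against $S_{\leq Y}^{(0)}$, so it cannot be dispensed with.
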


\begin{proof}
We carry on with \eqref{eq:Sleq*FinalCommonFormula} and apply orthogonality of characters to the sum over $\psi$. This picks out the congruence
$\gamma_1 a_1 b_2 \equiv \gamma_2 a_2 b_1 \pmod{kq}$, however with a side condition $(\gamma_1 \gamma_2 a_1 a_2 b_1 b_2, kq) = 1$.  This side condition can be dropped, since the congruence $\gamma_1 a_1 b_2 \equiv \gamma_2 a_2 b_1 \pmod{kq}$, combined with $(\gamma_1 a_1 b_2, \gamma_2 a_2 b_1) = 1$, implies that $(\gamma_1 \gamma_2 a_1 b_2 a_2 b_1, kq) = 1$.
Additionally evaluating the $t$-integral, in all we obtain
\begin{equation*}
 S_{\infty} = Q k T
 \sum_{\bf g}
\sum_{(q,g_1 g_2) = 1} w_1\Big(\frac{q}{Q}\Big) 
 \\
\sum_{\substack{(a_1 b_1, a_2 b_2) = ({\bf a}, {\bf g}) = 1
\\
\gamma_1 a_1 b_2 \equiv \gamma_2 a_2 b_1 \shortmod{kq}
}}
\alpha_{a_1,b_1}^{(1, {\bf g})} \overline{\alpha}_{a_2, b_2}^{(2, {\bf g})}
 \widehat{w}\Big(T \log \frac{\gamma_1 a_1 b_2}{\gamma_2 a_2 b_1} \Big),
\end{equation*}
where $w_1(x) = x w(x)$ and $\widehat{w}(x) = \intR w(t) e^{ixt} dt$.

Let $S_{\infty}^{\text{diag}}$ be the contribution to $S_{\infty}$ from the diagonal $\gamma_1 a_1 b_2 = \gamma_2 a_2 b_1$.  Since $(\gamma_1 a_1 b_2, \gamma_2 a_2 b_1) = 1$, this forces $\gamma_i = a_i = b_i = 1$ for $i=1,2$.  Hence recalling \eqref{eq:alphabfgDef}, we obtain
\begin{equation}
\label{eq:SinftyDiagBound}
S_{\infty}^{\text{diag}} \ll Q^2 k T \sum_{g_1, g_2} |\alpha_{g_1, g_2}|^2 = Q^2 k T |\alpha|^2.
\end{equation}

Let $S_{\infty}'' = S_{\infty} - S_{\infty}^{\text{diag}}$ be the non-diagonal portion of $S_{\infty}$.  Write $\gamma_1 a_1 b_2 = \gamma_2 a_2 b_1 + qkr$, where $r \neq 0$.  Additionally, we detect the condition $(q,  g_1 g_2) = 1$ by M\"obius inversion in the form
 $\sum_{\substack{d|(q, g_1 g_2)}} \mu(d)$,
and substitute $q=de$.  This gives
\begin{multline*}
 S_{\infty}'' = Q k T
 \sum_{\bf g}
 \sum_{d| g_1 g_2  } \mu(d)
 \sum_{e}
 w_1\Big(\frac{de}{Q}\Big) 
 \sum_{\substack{r \in \mz \setminus \{0\} }}
 \\
\sum_{\substack{(a_1 b_1, a_2 b_2) =  ({\bf a}, {\bf g}) = 1
\\
\gamma_1 a_1 b_2 - \gamma_2 a_2 b_1 = dekr
}}
\alpha_{a_1,b_1}^{(1, {\bf g})} \overline{\alpha}_{a_2, b_2}^{(2, {\bf g})}
 \widehat{w}\Big(T \log \frac{\gamma_1 a_1 b_2}{\gamma_2 a_2 b_1} \Big).
\end{multline*}

Now we perform the divisor switch: re-write $\gamma_1 a_1 b_2 - \gamma_2 a_2 b_1 = dekr$ as
\begin{equation}
\label{eq:divisorswitchedcongruence}
 \gamma_1 a_1 b_2 \equiv \gamma_2 a_2 b_1 \pmod{dk|r|}, 
 \qquad e = \frac{\gamma_1 a_1 b_2 - \gamma_2 a_2 b_1}{dk r}.
\end{equation}
It is convenient to record that the side condition 
\begin{equation}
 \label{eq:sideconditionCoprimality}
 (\gamma_1 \gamma_2 a_1 a_2 b_1 b_2, dk r) = 1
\end{equation}
follows from the congruence \eqref{eq:divisorswitchedcongruence} together with the coprimality $(\gamma_1 a_1 b_2, \gamma_2 a_2 b_1) = 1$.  We also factor $r$ as
\begin{equation*}
 r = r_0 r_1, \qquad r_0 | (k g_1 g_2)^{\infty}, \qquad (r_1, k g_1 g_2) = 1.
\end{equation*}
With these substitutions, we obtain
\begin{multline*}
 S_{\infty}'' = Q k T
 \sum_{\bf g}
 \sum_{\substack{ d|g_1 g_2 \\ r_0 | (k g_1 g_2)^{\infty}}} \mu(d)
 \sum_{\substack{r_1 \in \mz \setminus \{0\}  \\ (r_1, k g_1 g_2) = 1}}
 w_1\Big(\frac{\gamma_1 a_1 b_2 - \gamma_2 a_2 b_1}{kr_0 r_1 Q}\Big) 
  \\
\sum_{\substack{(a_1 b_1, a_2 b_2) = ({\bf a}, {\bf g}) = 1
\\
\gamma_1 a_1 b_2 \equiv \gamma_2 a_2 b_1 \shortmod{dkr_0} \\
\gamma_1 a_1 b_2 \equiv \gamma_2 a_2 b_1 \shortmod{|r_1|}
}}
\alpha_{a_1,b_1}^{(1, {\bf g})} \overline{\alpha}_{a_2, b_2}^{(2, {\bf g})}
 \widehat{w}\Big(T \log \frac{\gamma_1 a_1 b_2}{\gamma_2 a_2 b_1} \Big).
\end{multline*}

Next we express the congruences using Dirichlet characters modulo $dkr_0$ and $|r_1|$; this is enabled by the side condition \eqref{eq:sideconditionCoprimality}.  This leads to
\begin{multline*}
 S_{\infty}'' = Q k T
 \sum_{\bf g}
  \sum_{\substack{ d|g_1 g_2 \\ r_0 | (k g_1 g_2)^{\infty}}} 
 \frac{\mu(d)}{\varphi(dk r_0)} \sum_{\theta \shortmod{dkr_0}}
 \sum_{\substack{r_1 \in \mz \setminus \{0\}  \\ (r_1, k g_1 g_2) = 1}}
\frac{1}{\varphi(|r_1|)} \sum_{\chi \shortmod{|r_1|}}
\\
  \sum_{\substack{(a_1 b_1, a_2 b_2) = 1 \\ ({\bf a}, {\bf g}) = 1
}}
\alpha_{a_1,b_1}^{(1, {\bf g})} \overline{\alpha}_{a_2, b_2}^{(2, {\bf g})}
 \chi \theta \Big(\frac{\gamma_1 a_1 b_2}{\gamma_2 a_2 b_1} \Big)
 w_1\Big(\frac{\gamma_1 a_1 b_2 - \gamma_2 a_2 b_1}{kr_0 r_1 Q}\Big) 
 \widehat{w}\Big(T \log \frac{\gamma_1 a_1 b_2}{\gamma_2 a_2 b_1} \Big).
\end{multline*}
The characters of varying modulus need to be primitive, so we substitute
\begin{equation*}
 r_1 \rightarrow r_1 r_2 q', 
 \qquad
 \chi \rightarrow \chi_0 \chi,
\end{equation*}
where $r_1 | (q')^{\infty}$, $(r_2, q') = 1$,
$\chi$ is primitive of modulus $|q'|$, and $\chi_0$ is trivial modulo $r_2$.  With this, we obtain
\begin{multline*}
 S_{\infty}'' = Q k T
 \sum_{\bf g}
 \sum_{\substack{ d|g_1 g_2 \\ r_0 | (k g_1 g_2)^{\infty}}} 
 \frac{\mu(d)}{\varphi(dk r_0)} 
 \sum_{\theta \shortmod{dkr_0}}
 \sum_{\substack{q' \neq 0 \\ (q', k g_1 g_2) = 1}}
 \sum_{\substack{r_1 | (q')^{\infty} \\ (r_2, q' k {\bf g}) = 1}} 
  \\
\sumstar_{\chi \shortmod{|q'|}}
  \sum_{\substack{(a_1 b_1, a_2 b_2) = 1 \\ (a_1 a_2 b_1 b_2, {\bf g} r_2) = 1
}}
\alpha_{a_1,b_1}^{(1, {\bf g})} \overline{\alpha}_{a_2, b_2}^{(2, {\bf g})}
 \chi \theta\Big(\frac{\gamma_1 a_1 b_2}{\gamma_2 a_2 b_1} \Big)
 \frac{w_1\Big(\frac{\gamma_1 a_1 b_2 - \gamma_2 a_2 b_1}{kr_0 r_1 r_2 q' Q}\Big) }{\varphi(r_1 r_2 |q'|)} 
 \widehat{w}\Big(T \log \frac{\gamma_1 a_1 b_2}{\gamma_2 a_2 b_1} \Big).
\end{multline*}
Let $w_1(x) = x^{-1} w_2(x)$, so $w_2(x) = x^2 w(x)$, and $\widetilde{w_2}(-s) = \widetilde{w}(2-s)$.  In addition, apply the Mellin inversion formula to $w_2$.  Then we obtain that $S_{\infty}''$ equals
\begin{multline*}
 Q^2 k T \sum_{\bf g}
 \sum_{\substack{ d|g_1 g_2 \\ r_0 | (k g_1 g_2)^{\infty}}} 
 \frac{\mu(d) k r_0}{\varphi(dk r_0)} \sum_{\theta \shortmod{dkr_0}}
 \sum_{\substack{ (q', k g_1 g_2) = 1}}
 \sum_{\substack{r_1 | (q')^{\infty} \\ (r_2, q' k {\bf g}) = 1}} 
\frac{   r_2 |q'|  }{\varphi(r_2) \varphi(|q'|)} 
\sumstar_{\chi \shortmod{|q'|}}
  \int_{(2)} \widetilde{w}(2-s)
  \\
    \sum_{\substack{(a_1 b_1, a_2 b_2) = 1 \\ (a_1 a_2 b_1 b_2, {\bf g} r_2) = 1
}}
\Big(\frac{\gamma_1 a_1 b_2 - \gamma_2 a_2 b_1}{kr_0 r_1 r_2 q' Q} \Big)^{s}
\frac{(\sgn) \alpha_{a_1,b_1}^{(1, {\bf g})} \overline{\alpha}_{a_2, b_2}^{(2, {\bf g})}}{|\gamma_1 a_1 b_2 - \gamma_2 a_2 b_1|}
 \chi \theta\Big(\frac{\gamma_1 a_1 b_2}{\gamma_2 a_2 b_1} \Big)
  \widehat{w}\Big(T \log \frac{\gamma_1 a_1 b_2}{\gamma_2 a_2 b_1} \Big) \frac{ds}{2 \pi i},
\end{multline*}
where the summand $(\sgn)$ is shorthand for the indicator function that
\begin{equation}
\label{eq:sgncondition}
\sgn(q') = \sgn(\gamma_1 a_1 b_2 - \gamma_2 a_2 b_1).
\end{equation}
Prior to the Mellin inversion formula, \eqref{eq:sgncondition} was enforced by the support of $w_2$.

The sums over $r_1$ and $r_2$ evaluate exactly as in \eqref{eq:Zq'kgaformula}.
Thus
\begin{multline*}
 S_{\infty}'' = Q^2 k T
 \sum_{\bf g}
 \sum_{\substack{d|g_1 g_2 \\ r_0 | (k g_1 g_2)^{\infty}}} 
 \frac{\mu(d)  k r_0}{\varphi(dk r_0)} \sum_{\theta \shortmod{dkr_0}}
 \sum_{\substack{q' \neq 0 \\ (q', k g_1 g_2) = 1}}
\frac{   |q'|  }{\varphi(|q'|)} 
\sumstar_{\chi \shortmod{| q'|}}
\\
   \int_{(2)} \widetilde{w}(2-s)
  Z_{1,1}(s) \nu_{q'}(s) \delta_{{\bf g} k}(s)
    \sum_{\substack{(a_1 b_1, a_2 b_2) = 1 \\ ({\bf a}, {\bf g}) = 1
}}  
\Big(\frac{\gamma_1 a_1 b_2 - \gamma_2 a_2 b_1}{kr_0 q' Q} \Big)^{s}
\\
\frac{(\sgn) \delta_{a_1 b_1}(s) \delta_{a_2 b_2}(s) \alpha_{a_1,b_1}^{(1, {\bf g})} \overline{\alpha}_{a_2, b_2}^{(2, {\bf g})}}{|\gamma_1 a_1 b_2 - \gamma_2 a_2 b_1|}
 \chi \theta\Big(\frac{\gamma_1 a_1 b_2}{\gamma_2 a_2 b_1} \Big)
  \widehat{w}\Big(T \log \frac{\gamma_1 a_1 b_2}{\gamma_2 a_2 b_1} \Big)  \frac{ds}{2 \pi i}.
\end{multline*}

Now we apply a dyadic partition of unity of the form
\begin{equation*}
1 = \sum_{\substack{V \text{ dyadic} }} 
\omega\Big(\frac{\gamma_1 a_1 b_2 - \gamma_2 a_2 b_1}{V}\Big)
\end{equation*}
where $\omega$ is smooth, even, and supported on $[1,2] \cup [-2,-1]$.
By the rapid decay of $\widehat{w}$, 
and recalling \eqref{eq:gammaiaibjsizes},
note that
\begin{equation*}
 \widehat{w}\Big(T \log \frac{\gamma_1 a_1 b_2}{\gamma_2 a_2 b_1} \Big)
 \ll \Big(1 + T \frac{|\gamma_1 a_1 b_2 - \gamma_2 a_2 b_1|}{\gamma_2 a_2 b_1}\Big)^{-A} \ll
 \Big(1 + T \frac{|\gamma_1 a_1 b_2 - \gamma_2 a_2 b_1|}{N/(g_1 g_2)}\Big)^{-A}.
\end{equation*}
Therefore, we may assume that
\begin{equation}
\label{eq:VmaxDef}
 1 \ll V \leq V_{\text{max}} = \frac{N}{g_1 g_2 T} (QkTN)^{\varepsilon},
\end{equation}
absorbing $V > V_{\text{max}}$ into the error term $\mathcal{E}_{\infty}$.

With this partition, we obtain
\begin{multline*}
 S_{\infty}'' = Q^2 k T
  \sum_{\bf g}
 \sum_{1 \ll V \leq V_{\text{max}}} V^{-1}
 \sum_{\substack{d|g_1 g_2 \\ r_0 | (k g_1 g_2)^{\infty}}} 
 \frac{\mu(d) k r_0}{\varphi(dk r_0)} \sum_{\theta \shortmod{dkr_0}}
 \sum_{\substack{q' \neq 0 \\ (q', k g_1 g_2) = 1}}
\frac{   |q'|  }{\varphi(|q'|)} 
\\
\sumstar_{\chi \shortmod{| q'|}}
  \frac{1}{2 \pi i} \int_{(2)} 
  \Big(\frac{V}{kr_0 |q'| Q}\Big)^s
  \widetilde{w}(2-s)
  Z_{1,1}(s) \nu_{q'}(s) \delta_{{\bf g} k}(s)
    \sum_{\substack{(a_1 b_1, a_2 b_2) = 1 \\ ({\bf a}, {\bf g}) = 1
}}  
\\
(\sgn)
\delta_{a_1 b_1}(s) \delta_{a_2 b_2}(s) \alpha_{a_1,b_1}^{(1, {\bf g})} \overline{\alpha}_{a_2, b_2}^{(2, {\bf g})}
 \chi \theta\Big(\frac{\gamma_1 a_1 b_2}{\gamma_2 a_2 b_1} \Big)
 \omega_s\Big(\frac{\gamma_1 a_1 b_2 - \gamma_2 a_2 b_1}{V}\Big)
 \widehat{w}\Big(T \log \frac{\gamma_1 a_1 b_2}{\gamma_2 a_2 b_1} \Big) ds,
\end{multline*}
where $\omega_s(x) = x^{s-1} \omega(x)$.  By shifting the contour far to the right, $q'$ may be truncated at
\begin{equation}
\label{eq:Q*defFamilyAvg}
|q'| \leq Q^* := \frac{V}{k r_0 Q} (QkTN)^{\varepsilon}.
\end{equation}

We next want to apply Lemma \ref{lemma:separationofvariablesDeterminant}.
Note that
\begin{equation*}
 \gamma_1 a_1 b_2 = 
 \frac{\beta_{13} a_1}{g_1} \frac{\beta_{23} b_2}{g_2},
 \qquad
 \gamma_2 a_2 b_1
 =
  \frac{\beta_{14} a_2}{g_1} \frac{\beta_{24} b_1}{g_2},
\end{equation*}
where recall the support of $\alpha$ implies $\beta_{13} a_1 \asymp \beta_{14} a_2 \asymp N_1$ and $\beta_{23} b_2 \asymp \beta_{24} b_1 \asymp N_2$.  We may then freely attach a redundant weight function of the form
\begin{equation*}
 w\Big(\frac{\beta_{13} a_1}{N_1}, \frac{\beta_{24} b_1}{N_2}, 
 \frac{\beta_{14} a_2}{N_1}, \frac{\beta_{23} b_2}{N_2} \Big).
\end{equation*}
Now this is set up to apply Lemma \ref{lemma:separationofvariablesDeterminant} with 
$x_1 = g_1^{-1} \beta_{13} a_1$, $y_1 = g_1^{-1} \beta_{24} b_1$, $x_2 = g_1^{-1} \beta_{14} a_2$, $y_2 = g_2^{-1} \beta_{23} b_2$,
$X = \frac{N_1}{g_1}$ and $Y = \frac{N_2}{g_2}$, 
and with $\omega = \omega_s$.  Observe that with this substitution, then $\gamma_1 a_1 b_2 - \gamma_2 a_2 b_1 = x_1 y_2 - x_2 y_1$, as desired.  This gives
\begin{multline*}
 S_{\infty}'' = Q^2 k T
  \sum_{\bf g}
 \sum_{1 \ll V \leq V_{\text{max}}} V^{-1}
 \int_{\mr^4} G_s(u_1, u_2, u_3, t) 
 \sum_{\substack{d| g_1 g_2 \\ r_0 | (k g_1 g_2)^{\infty}}} 
 \frac{\mu(d) k r_0}{\varphi(dk r_0)} \sum_{\theta \shortmod{dkr_0}}
 \\
 \sum_{\substack{|q'| \leq Q^* \\ (q', k g_1 g_2) = 1}}
\frac{   |q'|  }{\varphi(|q'|)} 
\sumstar_{\chi \shortmod{| q'|}}
  \frac{1}{2 \pi i} \int_{(2)} 
  \Big(\frac{V}{kr_0 |q'| Q}\Big)^s
  \widetilde{w}(2-s)
  Z_{1,1}(s) \nu_{q'}(s) \delta_{k {\bf g}}(s)
\\    
    \sum_{\substack{(a_1 b_1, a_2 b_2) = 1 \\ ({\bf a}, {\bf g}) = 1 \\ (\sgn)
}}  
 \delta_{a_1 b_1}(s) \delta_{a_2 b_2}(s) \alpha_{a_1,b_1}^{(1, {\bf g})} \overline{\alpha}_{a_2, b_2}^{(2, {\bf g})}
 \chi \theta\Big(\frac{\gamma_1 a_1 b_2}{\gamma_2 a_2 b_1} \Big)
 \Big(\frac{\gamma_1 a_1 b_2}{\gamma_2 a_2 b_1}\Big)^{it} dt
\frac{du_1 du_2 du_3}{y_1^{iu_1} y_2^{iu_2} x_2^{iu_3} }
  ds,
\end{multline*}
plus a small error term.  Here $G = G_s$ depends on $s$, via $\omega_s(x) = x^{s-1} \omega(x)$.  We also record
\begin{equation}
\label{eq:RandUdefsFamilyAvgSide}
R = \frac{V g_1 g_2}{N}, \qquad \text{and} \qquad U = \frac{N}{g_1 g_2 V} (QkTN)^{o(1)}.
\end{equation}

Now we shift the contour to the line $\mathrm{Re}(s) = \varepsilon$.  In doing so we cross a pole at $s=1$, and we denote its residue by $S_{\infty}^{(0)}$.  
There is a small but convenient simplification with the sign condition \eqref{eq:sgncondition}, namely that all the summands are independent of $\sgn(\gamma_1 a_1 b_2 - \gamma_2 a_2 b_1)$ and $\sgn(q')$, except for the indicator function that these signs agree.  We may therefore take $q' > 0$.  We also make a small modification by factoring $r_0 = r_g r_k$ where $r_g | (g_1 g_2)^{\infty}$ and $r_k | k^{\infty}$.  
With this simplification and others, we obtain
\begin{multline}
\label{eq:Sinfty0FamilyAvgSide}
 S_{\infty}^{(0)} = Q k T
 \sum_{r_k | k^{\infty}}
  \sum_{\bf g}
 \sum_{1 \ll V \leq V_{\text{max}}} 
\int_{\mr^4} G_1(u_1, u_2, u_3, t) 
\sum_{\substack{d|g_1 g_2 \\ r_g | (g_1 g_2)^{\infty}}} 
 \frac{\mu(d)}{\varphi(dk r_g r_k)} 
 \\
 \sum_{\theta \shortmod{dkr_g r_k}}
 \sum_{\substack{q' \leq Q^* \\ (q', k g_1 g_2) = 1}}
\frac{   \widetilde{w}(1)
  Z_{1,1} \nu_{q'} \delta_{k{\bf g}} }{ \varphi(q')} 
\sumstar_{\chi \shortmod{ q'}}
\\
    \sum_{\substack{(a_1 b_1, a_2 b_2) = 1 \\ ({\bf a}, {\bf g}) = 1
}}  
 \delta_{a_1 b_1} \delta_{a_2 b_2} \alpha_{a_1,b_1}^{(1, {\bf g})} \overline{\alpha}_{a_2, b_2}^{(2, {\bf g})}
 \chi \theta\Big(\frac{\gamma_1 a_1 b_2}{\gamma_2 a_2 b_1} \Big)
 \Big(\frac{\gamma_1 a_1 b_2}{\gamma_2 a_2 b_1}\Big)^{it} dt
\frac{du_1 du_2 du_3}{y_1^{iu_1} y_2^{iu_2} x_2^{iu_3} }.
\end{multline}

Let $S_{\infty}'$ denote the remaining contour integral along $\mathrm{Re}(s) = \varepsilon$.  
Here we obtain
\begin{multline*}
|S_{\infty}'| \lesssim 
Q^2 k T
\int_{(\varepsilon)} |\widetilde{w}(2-s)|
  \sum_{\bf g}
 \sum_{1 \ll V \leq V_{\text{max}}} V^{-1}
 \sum_{\substack{d|g_1 g_2 \\ r_0 | (k g_1 g_2)^{\infty}}} 
 d^{-1}
 \\
\int_{\mr^4} |G_s(u_1, u_2, u_3, t)|
du_1 du_2 du_3 
 \sum_{\theta \shortmod{dkr_0}}
 \sum_{\substack{q' \leq Q^* \\ (q', k g_1 g_2) = 1}}
\sumstar_{\chi \shortmod{ q'}}
\\
\Big| 
\sum_{\substack{(a_1 b_1, a_2 b_2) = 1 \\ ({\bf a}, {\bf g}) = 1
}}  
\delta_{a_1 b_1}(s) \alpha_{a_1,b_1}^{(1, {\bf g})} 
\chi \theta(a_1 \overline{b_1}) \Big(\frac{a_1}{b_1}\Big)^{it}
\cdot
 \delta_{a_2 b_2}(s)  \overline{\alpha}_{a_2, b_2}^{(2, {\bf g})}
 \chi \theta(b_2 \overline{a_2}) \Big(\frac{b_2}{a_2}\Big)^{it}
 \Big|
 dt
  |ds|.
\end{multline*}
A small issue here concerns the dependence of $G_s$ on $s$.  
By the rapid decay of $|\widetilde{w}(2-s)|$, we may truncate the $s$-integral at $|s| \lesssim 1$.
The remark following Lemma \ref{lemma:separationofvariablesDeterminant} shows that the family of functions $G_s$ have a good uniform bound.
We may then truncate the $t$-integral at $U (QkTN)^{o(1)}$.
Lemma \ref{lemma:FamilyAvgBilinearBoundViaMobius} allows us to essentially remove the coprimality condition $(a_1 b_1, a_2 b_2) = 1$; we apply this lemma with $M \ll \frac{N}{g_1 g_2}$ and $\gamma_{a,b}^{(i)} = \delta_{ab}(s) \alpha_{a,b}^{(i, {\bf g})}$.
With these steps, we may then estimate $S_{\infty}'$ in terms of the original norm \eqref{eq:DeltaDef}, giving
\begin{multline}
\label{eq:Sinfty'boundPenultimateFamilyAvg}
|S_{\infty}'| \lesssim 
Q^2 k T
  \sum_{\bf g}
 \sum_{1 \ll V \leq V_{\text{max}}} V^{-1}
 \sum_{\substack{d| g_1 g_2 \\ r_0 | (k g_1 g_2)^{\infty}}}  d^{-1} 
 \\
 U^{-1} 
\overline{\Delta}\Big(Q^*, dkr_0,U, \frac{N}{g_1 g_2}\Big) | \alpha_{a_1, b_1}^{(1, {\bf g})} \alpha_{a_2, b_2}^{(2, {\bf g})}|,
\end{multline}
where recall $U$ is given by \eqref{eq:RandUdefsFamilyAvgSide} and $Q^*$ was defined by 
\eqref{eq:Q*defFamilyAvg}.  Note $UV = \frac{N}{g_1 g_2} (QkTN)^{o(1)}$.
It is convenient to write $V = V_{\text{max}}/P$, where $1 \ll P \ll V_{\text{max}}$, in which case \eqref{eq:Sinfty'boundPenultimateFamilyAvg} simplifies as
\begin{multline}
\label{eq:Sinfty'SemiFinalBoundFamilyAvgSide}
|S_{\infty}'| \lesssim
\frac{Q^2 k T}{N}
  \sum_{\bf g}
 \sum_{1 \ll P \ll V_{\text{max}}} 
 \sum_{\substack{d| g_1 g_2 \\ r_0 | (k g_1 g_2)^{\infty}}} 
 \frac{1}{d}
 \\
 g_1 g_2
\overline{\Delta}\Big(\frac{N}{QkT g_1 g_2 r_g  r_k P}, dkr_g r_k, P T , \frac{N}{g_1 g_2} \Big)  | \alpha_{a_1, b_1}^{(1, {\bf g})} \alpha_{a_2, b_2}^{(2, {\bf g})}|.
\end{multline}
Recalling the definition \eqref{eq:Delta''def}, this completes the proof.
\end{proof}

\subsection{Conclusion}
Now we use Propositions \ref{prop:SleqYFamilyAvg} and \ref{prop:SinftyFamilyAvg} to prove Theorem \ref{thm:familyavgthm}.  Recall that we need to show that $S_{>Y}$ satisfies \eqref{eq:Delta2Bound}, that is
\begin{equation*}
 S_{>Y} \lesssim
 |\alpha|^2 \Big(
 Q^2 kT + \frac{Q^2 k T}{N} \overline{\Delta'}\Big(\frac{N}{kQT}, k, T, N\Big)
 \Big) ,
\end{equation*}
where for convenience to the reader we recall the definition \eqref{eq:Delta''def}:
\begin{equation*}
 \Delta'(Q,k,T,N) = 
 \max_{\substack{X, R, U, C \in \mr_{\geq 1}, \ell \in \mz_{>0} \\  X R^2 \ell U \leq Q^2 k T \\ X \leq C }} 
  X
 \Delta\Big(R, \ell, U, \frac{N}{C} \Big).
\end{equation*}

We have a decomposition
\begin{equation*}
S_{>Y} = S_{\infty} - S_{\leq Y} = S_{\infty}^{\text{diag}} + S_{\infty}' - S_{\leq Y}' + (S_{\infty}^{(0)} - S_{\leq Y}^{(0)}) + \mathcal{E}_{\infty}.
\end{equation*}
The diagonal term $S_{\infty}^{\text{diag}}$ is acceptable for Theorem \ref{thm:familyavgthm}, as is $\mathcal{E}_{\infty}$.

Now we turn to the terms $S_{\leq *}'$.  Recall the definitions \eqref{eq:Q*defFamilyAvg} and \eqref{eq:VmaxDef}.
We choose 
\begin{equation}
Y = (QkTN)^{\varepsilon} \frac{N}{QkT},
\end{equation}
with a value of $\varepsilon$ so that
when $V = V_{\text{max}}$, then
$Q^* = \frac{Y}{g_1 g_2 r_g  r_k}$.
Using the assumption $Q^2 k T \gg N^{1-\varepsilon}$, it is easy to check that \eqref{eq:SleqY'boundFamilyAvgSide} is acceptable for Theorem \ref{thm:familyavgthm}, and also that $Y \leq Q/100$, so this is a valid choice of $Y$.  Moreover, \eqref{eq:Sinfty'boundFamilyAvgSide} directly shows that $S_{\infty}'$ is bounded in accord with the theorem.

Finally, consider the polar terms from $s=1$, namely $S_{\infty}^{(0)}$ and $S_{\leq Y}^{(0)}$ given by \eqref{eq:Sinfty0FamilyAvgSide} and \eqref{eq:SleqY0formulaFamilyAvg}.  We simplify $S_{\infty}^{(0)}$, continuing with \eqref{eq:Sinfty0FamilyAvgSide}.  
We reverse the orders of summation between $V$ and $q'$; 
the condition $q' \leq Q^* = \frac{CV}{Q k r_g r_k}$ (where $C$ here is shorthand for $(QkTN)^{\varepsilon}$) becomes instead $V > C^{-1} q'Q k r_k r_g$ (on the inside) and $q' \leq \frac{Y}{r_g r_k g_1 g_2}$ (on the outside).  We then write $S_{\infty}^{(0)} = S_{\infty, 1}^{(0)} - S_{\infty, 2}^{(0)}$, where $S_{\infty, 1}^{(0)}$ has $V$ unconstrained, and $S_{\infty, 2}^{(0)}$ has $V \leq C^{-1} q' Q k r_k r_g$.  A pleasant feature of $S_{\infty, 1}^{(0)}$ is that the sum over $V$ re-assembles the partition of unity, since $G_1$ corresponds to $\omega_{s}(x) \vert_{s=1} = \omega(x)$.  We also re-open the definition of $\widehat{w}$.  Together, these steps give
\begin{multline}
\label{eq:Sinfty10formula}
 S_{\infty,1}^{(0)} = Q k 
 \sum_{r_k | k^{\infty}}
  \sum_{\bf g}
  \intR w\Big(\frac{t}{T}\Big)
\sum_{\substack{d|g_1 g_2 \\ r_g | (g_1 g_2)^{\infty}}} 
 \frac{\mu(d)}{\varphi(dk r_g r_k)} \sum_{\theta \shortmod{dkr_g r_k}}
 \\
 \sum_{\substack{q' \leq \frac{Y}{r_k r_g g_1 g_2} \\ (q', k g_1 g_2) = 1}}
\frac{   1 }{ \varphi(q')} 
\sumstar_{\chi \shortmod{ q'}}
  \widetilde{w}(1)
  Z_{1,1} \nu_{q'} \delta_{k{\bf g}}
    \sum_{\substack{(a_1 b_1, a_2 b_2) = 1 \\ ({\bf a}, {\bf g}) = 1
}}  
\\
 \delta_{a_1 b_1} \delta_{a_2 b_2} \alpha_{a_1,b_1}^{(1, {\bf g})} \overline{\alpha}_{a_2, b_2}^{(2, {\bf g})}
 \chi \theta\Big(\frac{\gamma_1 a_1 b_2}{\gamma_2 a_2 b_1} \Big)
 \Big(\frac{\gamma_1 a_1 b_2}{\gamma_2 a_2 b_1}\Big)^{it} dt.
\end{multline}
Next we further cut this sum into four pieces, via
\begin{equation}
\label{eq:SinftyFourPieces}
 \sum_{q' \leq \frac{Y}{r_k r_g g_1 g_2}} 
 = \sum_{q' \leq \frac{Y}{r_k}} - \sum_{\frac{Y}{r_k g_1 g_2} < q' \leq \frac{Y}{r_k}}
 - 
 \sum_{\frac{Y}{r_k r_g d g_1 g_2} < q' \leq \frac{Y}{r_k g_1 g_ 2}}
 +
 \sum_{\frac{Y}{r_k r_g d g_1 g_2} \leq q' \leq \frac{Y}{r_k r_g g_1 g_2}}.
\end{equation}
Call the corresponding sums $S_{i}$, for $i=1,2,3,4$.  There is a pleasant simplification available for $S_1$, $S_2$, and $S_3$.  In these three sums, both the summation conditions in \eqref{eq:SinftyFourPieces}, as well as all the summands in \eqref{eq:Sinfty10formula}, depend only on the \emph{product} $d r_g=D$ (say), with the exception of the presence of $\mu(d)$.  
M\"obius inversion means that the sum over $d|D$ detects $D=1$.  This immediately implies $S_3 = 0$.  Moreover, we see that $S_1 = S_{\leq Y}^{(0)}$, which is a crucial cancellation.  The sum $S_2$ becomes
\begin{multline*}
 S_{2} = - Q k 
 \sum_{r_k | k^{\infty}}
  \sum_{\bf g}
  \intR w\Big(\frac{t}{T}\Big)
 \frac{1}{\varphi(kr_k)} \sum_{\theta \shortmod{kr_k}}
 \\
 \sum_{\substack{\frac{Y}{r_k g_1 g_2} < q' \leq \frac{Y}{r_k} \\ (q', k g_1 g_2) = 1}}
\frac{   1 }{ \varphi(q')} 
\sumstar_{\chi \shortmod{ q'}}
  \widetilde{w}(1)
  Z_{1,1} \nu_{q'} \delta_{k{\bf g}}
    \sum_{\substack{(a_1 b_1, a_2 b_2) = 1 \\ ({\bf a}, {\bf g}) = 1
}}  
\\
 \delta_{a_1 b_1} \delta_{a_2 b_2} \alpha_{a_1,b_1}^{(1, {\bf g})} \overline{\alpha}_{a_2, b_2}^{(2, {\bf g})}
 \chi \theta\Big(\frac{\gamma_1 a_1 b_2}{\gamma_2 a_2 b_1} \Big)
 \Big(\frac{\gamma_1 a_1 b_2}{\gamma_2 a_2 b_1}\Big)^{it} dt.
\end{multline*}
Similarly to the estimation of $S_{\infty}'$, using Lemma \ref{lemma:FamilyAvgBilinearBoundViaMobius}  we obtain
\begin{equation*}
|S_2| \lesssim |\alpha|^2 Qk  
\max_{\substack{{\bf g}, r_k | k^{\infty}  }} \frac{1}{k r_k}
 \max_{\frac{Y}{g_1 g_2 r_k} \leq Q' \leq \frac{Y}{r_k}} \frac{1}{Q'} 
 \overline{\Delta}\Big(Q', k r_k, T, \frac{N}{g_1 g_2}\Big).
\end{equation*}
Write $Q' = \frac{Y}{r_k P}$, where $1 \leq P \leq g_1 g_2$, giving 
\begin{equation*}
S_2 \lesssim |\alpha|^2 \frac{Q^2 k T}{N}
\max_{\substack{{\bf g}, r_k | k^{\infty}  \\ 1 \leq P \leq g_1 g_2}} P
 \overline{\Delta}\Big(\frac{N}{QkT r_k P}, k r_k, T, \frac{N}{g_1 g_2}\Big).
\end{equation*}
This is consistent with Theorem \ref{thm:familyavgthm}.
The sum $S_4$ is similar in shape, and we obtain
\begin{equation*}
 S_4 \lesssim |\alpha|^2 \frac{Q^2 k T}{N} 
 \max_{\substack{{\bf g}, r_k | k^{\infty} \\ r_g | (g_1 g_2)^{\infty}  }} 
 \max_{1 \leq P \leq d} \frac{Pg_1 g_2}{d}
 \overline{\Delta}\Big(\frac{N}{Qk T r_k r_g g_1 g_2 P}, d k  r_k r_g, T, \frac{N}{g_1 g_2}\Big),
\end{equation*}
which is acceptable.

Next we turn to estimating $S_{\infty,2}^{(0)}$.  Our expression for this is identical to \eqref{eq:Sinfty10formula}, except we have an additional weight function of the form
\begin{equation}
\Omega(x) := \sum_{1 \ll V \lesssim q' Q k r_k r_g} \omega\Big(\frac{x}{V}\Big),
\qquad \text{with} 
\qquad x=\gamma_1 a_1 b_2 - \gamma_2 a_2 b_1.
\end{equation}
The function $\Omega(x)$ is identically $1$ for $1 \leq |x| \lesssim q' Q k r_k r_g$, but it vanishes at $x=0$.  Let $\Omega_0(x) = 1- \Omega(x)$ for $|x| \leq 1$, and such that $\Omega_0(x) =0$ for $|x| \geq 1$. Let $S_{\infty, 2}'$ denote the same expression as $S_{\infty, 2}^{(0)}$ but with $\Omega$ replaced by $\Omega_1 := \Omega + \Omega_0$, and let $S_{\infty,2}^{\text{diag}} = S_{\infty, 2}' - S_{\infty, 2}^{(0)}$.  Indeed, $S_{\infty, 2}^{\text{diag}}$ is supported only on $\gamma_1 a_1 b_2 = \gamma_2 a_2 b_1$.  By similar reasoning as in \eqref{eq:SinftyDiagBound}, we obtain
\begin{equation*}
 |S_{\infty, 2}^{\text{diag}}| \lesssim QkT Y |\alpha|^2 \lesssim N |\alpha|^2.
\end{equation*}
Since $N \lesssim Q^2 k T$, this is no worse than \eqref{eq:SinftyDiagBound}.

Finally, consider $S_{\infty, 2}'$.  The function $\Omega_1$ meets the conditions of Lemma \ref{lemma:separationofvariablesDeterminant}, with $V$ taking the value $C^{-1} q' Q k r_k r_g$.  
Hence we obtain an expression of the form
\begin{multline*}
 S_{\infty,2}' = Q k T
 \sum_{r_k | k^{\infty}}
  \sum_{\bf g}
  \int_{\mr^4} 
\sum_{\substack{d|g_1 g_2 \\ r_g | (g_1 g_2)^{\infty}}} 
 \frac{\mu(d)}{\varphi(dk r_g r_k)} \sum_{\theta \shortmod{dkr_g r_k}}
 \\
 \sum_{\substack{q' \leq \frac{Y}{r_k r_g g_1 g_2} \\ (q', k g_1 g_2) = 1}}
\frac{   1 }{ \varphi(q')} 
\sumstar_{\chi \shortmod{ q'}}
  \widetilde{w}(1)
  Z_{1,1} \nu_{q'} \delta_{k{\bf g}}
    \sum_{\substack{(a_1 b_1, a_2 b_2) = 1 \\ ({\bf a}, {\bf g}) = 1
}}  
G(t, u_1, u_2, u_3)
\\
 \delta_{a_1 b_1} \delta_{a_2 b_2} \alpha_{a_1,b_1}^{(1, {\bf g})} \overline{\alpha}_{a_2, b_2}^{(2, {\bf g})}
 \chi \theta\Big(\frac{\gamma_1 a_1 b_2}{\gamma_2 a_2 b_1} \Big)
 \Big(\frac{\gamma_1 a_1 b_2}{\gamma_2 a_2 b_1}\Big)^{it} dt \frac{du_1 du_2 du_3}{y_1^{u_1} y_2^{u_2} x_2^{u_3}}.
\end{multline*}
The bound on $G$ is given by \eqref{eq:GboundFamilyAvgSide}, with now
\begin{equation*}
U = \frac{N}{q' Q k r_k r_g g_1 g_2 } (QkTN)^{o(1)}.
\end{equation*}
The estimations are similar to those of $S_{\infty}'$, $S_2$, and $S_4$, and 
we obtain
\begin{equation*}
|S_{\infty,2}'| \lesssim 
|\alpha|^2 Qk T 
\max_{\substack{{\bf g}, r_k | k^{\infty}  \\ r_g | (g_1 g_2)^{\infty} }} \frac{1}{d k r_g r_k}
 \max_{ Q' \leq \frac{Y}{r_k r_g g_1 g_2}} \frac{1}{UQ'} 
 \overline{\Delta}\Big(Q', d k r_k r_g, U, \frac{N}{g_1 g_2}\Big).
\end{equation*}
This simplifies as
\begin{equation*}
|S_{\infty,2}'| \lesssim 
|\alpha|^2 \frac{Q^2 k T}{N} 
\max_{\substack{{\bf g}, r_k | k^{\infty}  }} 
 \max_{ 1 \ll P \lesssim \frac{N}{Q k T r_k r_g g_1 g_2 }} \frac{   g_1 g_2}{d} 
 \overline{\Delta}\Big(\frac{N}{QkT r_k r_g g_1 g_2 P}, d k r_k r_g, TP, \frac{N}{g_1 g_2}\Big).
\end{equation*}
One checks this is consistent with Theorem \ref{thm:familyavgthm}, which completes its proof.

\end{document}